\newtheorem*{theorem*}{Theorem}
\newtheorem{maintheorem}{Theorem}[section]
\newtheorem{theorem}{Theorem}[section]
\newtheorem{lemma}[theorem]{Lemma}
\newtheorem{proposition}[theorem]{Proposition}
\newtheorem{corollary}[theorem]{Corollary} 
\theoremstyle{definition}
\newtheorem{nota}[theorem]{Notation}
\newtheorem{definition}[theorem]{Definition}
\newtheorem{example}[theorem]{Example}
\newtheorem{remark}[theorem]{Remark}
\newtheoremstyle{myitemstyle}						% flexible theorem style
	{}			%Space above
	{}			%Space below
	{}			%Body font
	{}			%indent amount
	{}			%Thm head font
	{.}			%Punkte nach thm head
	{ }			%Abstand nach thm head
	{}			%Thm head spec
\theoremstyle{myitemstyle}
\newtheorem{myitemthm}{}
 \newcommand{\martincolor}[1]{{\color{green!60!black} \sf #1}}
 \newcommand{\annette}[1]{{\color{teal} \sf Annette: [#1]}}
 \newcommand{\andreascolor}[1]{{\color{purple!60!black} \sf #1}}
\newcommand{\R}{\mathbb{R}}
\newcommand{\Z}{\mathbb{Z}}
\newcommand{\Q}{\mathbb{Q}}
\newcommand{\C}{\mathbb{C}}
\newcommand{\G}{\mathbb{G}}
\newcommand{\T}{\mathbb{T}}
\newcommand{\NLargeLattice}[1]{N_{#1}^{\Z}}
\newcommand{\MLargeLattice}[1]{M_{#1}^{\Z}}
\newcommand{\LargeLattice}[1]{\Lambda_{#1}^{\Z}}
\newcommand{\SmallLattice}[1]{\Lambda_{#1}^{c}}
\newcommand{\calE}{\mathcal{E}}
\newcommand{\calS}{\mathcal{S}}
\newcommand{\mc}[1]{{\mathcal{#1}}}
\DeclareMathOperator{\Pic}{Pic}
\DeclareMathOperator{\Spec}{Spec}
\DeclareMathOperator{\Hom}{Hom}
\DeclareMathOperator{\Ext}{Ext}
\DeclareMathOperator{\Aut}{Aut}
\DeclareMathOperator{\calHom}{\mathcal{H}om}
\DeclareMathOperator{\id}{id}
\DeclareMathOperator{\GL}{GL}
\newcommand{\trop}{{\mathrm{trop}}}
\DeclareMathOperator{\Sym}{Sym}
\DeclareMathOperator{\Aff}{\mathrm{Aff}}
\DeclareMathOperator{\rk}{rk}
\DeclareMathOperator{\NS}{NS}
\DeclareMathOperator{\End}{End}
\newcommand\diag{{\mathrm{diag}}}
\newcommand\main{{\mathrm{main}}}
\newcommand{\an}{\mathrm{an}}
\title{Semi-homogeneous vector bundles on abelian varieties:  \\ moduli spaces and their tropicalization } 
\date{}
\author{Andreas Gross}
\address{Institut f\"ur Mathematik, Goethe--Universit\"at Frankfurt,
60325 Frankfurt am Main, Germany}
\email{gross@math.uni-frankfurt.de}
\author{Inder Kaur}
\address{School of Mathematics \& Statistics, University of Glasgow,
Glasgow G12 8QQ, UK}
\email{inder.kaur@glasgow.ac.uk}
\author{Martin Ulirsch}
\address{Institut f\"ur Mathematik, Universit\"at Paderborn,
33098 Paderborn, Germany}
\email{ulirsch@math.uni-paderborn.de}
\author{Annette Werner}
\address{Institut f\"ur Mathematik, Goethe--Universit\"at Frankfurt,
60325 Frankfurt am Main, Germany}
\email{werner@math.uni-frankfurt.de}
\begin{document}

\maketitle

\begin{abstract} 
Let $A$ be an abelian variety with  totally degenerate reduction over a non-Archimedean field. We describe the moduli space of semihomogeneous vector bundles on $A$ from the perspective of non-Archimedean uniformization and show that the essential skeleton may be identified with a tropical analogue of this moduli space.  For $H=0$ our moduli space may be identified with the moduli space $M_{0,r}(A)$ of semistable vector bundles with vanishing Chern classes on $A$. In this case we construct a surjective analytic morphism from the character variety of the analytic fundamental group of $A$ onto $M_{0,r}(A)$, which naturally tropicalizes.  One may view this construction as a non-Archimedean uniformization of $M_{0,r}(A)$. 
%We construct a moduli space of semi-homogeneous vector bundles with a fixed Néron-Severi class $H$ on an abelian variety $A$ over an algebraically closed field of characteristic zero. 
%When $A$ has totally degenerate reduction over a non-Archimedean field, we describe our moduli space from the perspective of non-Archimedean uniformization and show that the essential skeleton may be identified with a tropical analogue of this moduli space.  For $H=0$ our moduli space may be identified with the moduli space $M_{0,r}(A)$ of semistable vector bundles with vanishing Chern classes on $A$. In this case we construct a surjective analytic morphism from the character variety of the analytic fundamental group of $A$ onto $M_{0,r}(A)$, which naturally tropicalizes.  One may view this construction as a non-Archimedean uniformization of $M_{0,r}(A)$. 
\end{abstract}

\setcounter{tocdepth}{1}
\tableofcontents

%%%%%%%%%%%%%%%%%%%%%%%%%%%%%%%%%%%%%%%%%%%%%%%%%%%%%%

\section*{Introduction}

In this article we study semi-homogeneous vector bundles on abelian varieties (in the sense of Mukai \cite{Mukai}) and their moduli spaces through the lens of tropical geometry (expanding on \cite{GrossUlirschZakharov}) and the non-Archimedean approach to the SYZ fibration via essential skeletons of Berkovich analytic spaces (introduced in \cite{KontsevichSoibelman}). 

Throughout we will work over an algebraically closed field $K$ of characteristic zero. From Section \ref{section_AppellHumbert} onwards we also assume that $K$ is complete with respect to a non-trivial non-Archimedean absolute value.

\subsection*{Semi-homogeneous vector bundles and their moduli spaces} Let $A$ be an abelian variety over an algebraically closed field $K$ of characteristic $0$. The engine that drives this whole article is the theory of (semi-)homogeneous vector bundles on $A$ that has been developed in \cite{Miyanishi} and \cite{Mukai} (also see \cite{Morimoto} and \cite{Matsushima} for a complex analytic predecessor of this notion).  Write $T_x\colon A\rightarrow A$ for the translation by a closed point $x$ of $A$. A vector bundle $E$ on $A$ is said to be \emph{semi-homogeneous} if we have $T_x^\ast E\simeq L\otimes E$, for a suitable line bundle $L$ on $A$ (possibly depending on $x$) and \emph{homogeneous} if $L$ may be chosen to be trivial.  
 By \cite[Theorem 5.8]{Mukai} a simple vector bundle $E$ on an abelian variety $A$ is semi-homogeneous if and only if there exists an isogeny $f\colon B\to A$ and a line bundle $L$ on $B$ such that $E\cong f_*(L)$.

For a vector bundle $E$ on $A$, its slope $\delta(E)$ is the class $\det(E)/r(E)$ of its determinant divided by its rank in the Néron-Severi group $\NS(A)_\Q$ with coefficients in $\Q$. By \cite{Mukai}, given a class $H\in \NS(A)_\Q$, there exists a simple semi-homogeneous vector bundle $E$ on $H$ with $\delta(E)=H$, and any two such bundles coincide up to tensoring with a line bundle in $\Pic^0(A)$ (for details see Theorem \ref{thm:existence of simple semi-homogeneous bundles} below). We denote by $n(H)$ the rank of any such simple semi-homogeneous vector bundle.
Let $k\in \Z_{>0}$ and set $r=k\cdot n(H)$. Denote by $M_{H,k}(A)$ the normalization of the locus of semi-homogeneous vector bundles $E$ of slope $H$ and rank $r=k\cdot n(H)$ in the moduli space semistable torsion free sheaves on $A$. %Denote by $M_{H,k}(A)$ the locus of semi-homogeneous vector bundles $E$ of slope $H$ and rank $r=k\cdot n(H)$ in the moduli space semistable torsion free sheaves on $A$. 
We shall see in Theorem \ref{thm:description of moduli spaces} below that its image in the moduli space defines a connected component of the moduli space. 

We write 
\[
\Sigma(H)= \{x\in A^\vee \mid P_x\otimes E \cong E\} \ ,
\]
where $P_x$ is the line bundle on $A$ corresponding to $x$ in the the dual abelian variety $A^\vee$ and $E$ is any semi-homogeneous vector bundle with $\delta(E)=H$. Let $\pi^\ast: A^\vee  \rightarrow A^\vee / \Sigma(H)$ be the isogeny with kernel $\Sigma(H)$, and let $\pi: A_H \rightarrow A$ be the dual isogeny, where $A_H$ denotes the dual of $A^\vee / \Sigma(H)$.

%\begin{maintheorem} [see Theorem \ref{thm:description of moduli spaces}]\label{mainthm_SHmoduli}
%Let $A$ be an abelian variety over an algebraically closed field $K$ of characteristic zero and fix a class $H\in \NS(A)_{\Q}$.
%\begin{enumerate}[(i)]
%\item When $k=1$, then 
%\begin{equation*}
%M_{H,1}(A) \cong \Pic^{\pi^*H}(A_H) 
%\end{equation*}
%\item When $k\geq 1$, then there is a bijective morphism 
%\begin{equation*}
%    \Sym^k M_{H,1}(A)\longrightarrow M_{H,k}(A)
%\end{equation*}
%that induces an isomorphism between $\Sym^k M_{H,1}(A)$ and the normalization of $M_{H,k}(A)$. If $K$ is complete with respect to a non-trivial non-Archimedean valuation, then the induced morphism
%    \[
%    \Sym^k M_{H,1}(A)^{\an}\longrightarrow M_{H,k}(A)^{\an}
%    \]
%    between the Berkovich analytifications is a homeomorphism.
%\end{enumerate}
%\end{maintheorem}

%\martin{ Maybe this is a nicer way to phrase this after all: 
\begin{maintheorem} [see Theorem \ref{thm:description of moduli spaces}]\label{mainthm_SHmoduli}
Let $A$ be an abelian variety over an algebraically closed field $K$ of characteristic zero and fix a class $H\in \NS(A)_{\Q}$.
\begin{enumerate}[(i)]
\item When $k=1$, then 
\begin{equation*}
M_{H,1}(A) \cong \Pic^{\pi^*H}(A_H) 
\end{equation*}
\item When $k \geq 1$, then there is  a canonical isomorphism 
\begin{equation*}
    \Sym^k M_{H,1}(A)\xlongrightarrow{\sim} M_{H,k}(A) \ .
\end{equation*}
%that induces an isomorphism between $\Sym^k M_{H,1}(A)$ and the normalization of $M_{H,k}(A)$. %If $K$ is complete with respect to a non-trivial non-Archimedean valuation, then the induced morphism
    %\[
    %\Sym^k M_{H,1}(A)^{\an}\longrightarrow M_{H,k}(A)^{\an}
    %\]
    %between the Berkovich analytifications is a homeomorphism.
\end{enumerate}
\end{maintheorem}

%}

By \cite[Theorem 2 and the discussion on page 2]{MehtaNori} a vector bundle $E$ on $A$ is semi-homogeneous if and only it is \emph{semistable with projective Chern class zero}, meaning that the total Chern class is given by $\left(1+\frac{c_1(E)}{r}\right)^r$ where $r=k\cdot n(H)$ is the rank of $E$ and $E$ is semistable with respect to some polarization on $A$. So the component $M_{H,k}(A)$ may also be reinterpreted as a moduli space of semistable bundles with projective Chern class zero (and fixed slope $H$). 

If $H=0$, $M_{H,k}(A)$ the moduli space of semistable vector bundles of rank $r=k$, all of whose Chern classes are trivial (which is why we often switch the index from $k$ to $r$). This space appears e.g.\ in \cite{SimpsonII} as the locus in the Dolbeault moduli space of topologically trivial semistable Higgs bundles where the Higgs field is zero. In this case, it appears to be well-known in the community that the morphism in Part (ii) of Theorem \ref{mainthm_SHmoduli} induces an isomorphism onto its image in the moduli space of semistable vector bundles. This fact can, for example, also be deduced from the description of the moduli space of $\Lambda$-modules on $A$ as a symmetric power in \cite[Theorem 3.14 and Proposition 3.16]{FrancoTartella} (also see \cite[Proposition 1.6]{BKU}).

Theorem \ref{mainthm_SHmoduli} is a moduli-theoretic reinterpretation of the structure results on semi-homogeneous vector bundles in \cite{Mukai}. The technique used in \cite{FrancoTartella}, as well as the technique used in the proof of Theorem \ref{mainthm_SHmoduli}, is based on Mukai's work on Fourier-Mukai transforms \cite{mukai_1981}, where he already observed the equivalence of the category of homogeneous vector bundles on an abelian variety and that of finite-length sheaves on its dual.

Part (i) of Theorem \ref{mainthm_SHmoduli} is a slightly more explicit version of \cite[Proposition 3.1]{Gulbrandsen}. In dimension one, Theorem \ref{mainthm_SHmoduli} specializes to Atiyah's classification of vector bundles on elliptic curves in \cite{Atiyah_elliptic}, which has found a moduli-theoretic interpretation in \cite{Tu_modulibundlesellipticcurve} and later in \cite{FourierMukaiGenus1} from a Fourier-Mukai perspective. Moduli spaces of simple semi-homogeneous sheaves also play a crucial role in Lane's study of Fourier-Mukai equivalences between twisted derived categories in \cite{Lane_semihomogeneoustwisted}.

\subsection*{Essential skeletons and tropicalization}
From now on we assume that $K$ (in addition to being algebraically closed and of characteristic zero) is also complete with respect to a non-trivial non-Archimedean absolute value. Let $A$ be an abelian variety over $K$ with totally degenerate reduction. By non-Archimedean uniformization, the Berkovich analytic space $A^{\an}$ is a quotient $T^{\an}/\Lambda$ of an algebraic torus $T\simeq \G_m^g$ by a lattice $\Lambda\simeq \Z^g$. The non-Archimedean skeleton of $A^{\an}$ in the sense of Berkovich \cite{Berkbook} is naturally identified with a real torus $A^{\trop}=N_\R/\Lambda$, where $N_\R$ is the $\R$-linear space spanned by the cocharacter lattice $N$ of $T$; we refer to $A^{\trop}$ as the \emph{tropicalization} of $A$.

Berkovich skeletons are usually not unique. In \cite{KontsevichSoibelman}, in an effort to construct a non-Archimedean analogue of mirror symmetry and, in particular, the SYZ-fibration, Kontsevich and Soibelman suggest the construction of an \emph{essential skeleton} of the Berkovich analytic space associated to a Calabi--Yau variety (also see \cite{MustataNicaise, NicaiseXu, NicaiseXuYu} for further developments). Since the moduli spaces $M_{H,k}$ are all Calabi--Yau varieties, they do admit a canonical strong deformation retraction $\tau$ onto their essential skeleton $\Sigma(M_{H,k})$.

In Section \ref{section_tropicalvectorbundles} below, we develop a tropical analogue of the geometry of semi-homogeneous vector bundles on $A^{\trop}$, expanding on the results in \cite{GrossUlirschZakharov} on metric graphs (and inspired by the case of line bundles developed in \cite{MikhalkinZharkov}). In particular, we prove a tropical analogue of Mukai's classification  of semi-homogeneous bundles \cite{Mukai}. Mukai's classification does not immediately allow us to construct a well-defined tropicalization map, whose target is the na\"ive tropical analogue of the moduli space $M_{H,k}(A)$. Given a fixed class $H\in \NS(A)_{\Q}$, we identify a natural smallest sublattice $\SmallLattice{H}$, that allows to construct a refined moduli space $M_{H,k}^{\SmallLattice{H}}(A^{\trop})$ of \emph{equivalence classes} of semi-homogeneous tropical vector bundles, that is the target of a natural tropicalization map $\trop\colon M_{H,k}(A)^{\an}\rightarrow M_{H,k}^{\SmallLattice{H}}(A^{\trop})$ (see Definition \ref{smalllattice{H}} for the precise definition of $\SmallLattice{H}$ and Definition \ref{def:equivalence of vector bundles} for the definition of $M_{H,k}^{\SmallLattice{H}}(A^{\trop})$).

The following Theorem \ref{mainthm_skeleton=tropicalization} tells us that this construction is precisely the retraction to the essential skeleton of $M_{H,k}(A)^{\an}$.

\begin{maintheorem}[see Theorem \ref{thm:skeleton in general case}]\label{mainthm_skeleton=tropicalization}
Let $A$ be an abelian variety over an algebraically closed complete non-Archimedean field $K$ with totally degenerate reduction, and fix $H\in \NS(A)_{\Q}$. Then there is a natural isomorphism
\begin{equation*}
    M_{H,k}^{\SmallLattice{H}}(A^{\trop})\xlongrightarrow{\sim} \Sigma\big(M_{H,k}(A)\big)
\end{equation*}
that makes the diagram
\begin{equation*}
    \begin{tikzcd}
       & M_{H,k}(A)^{\an} \arrow[rd,"\tau"] \arrow[ld,"\trop"']&\\
       M_{H,k}^{\SmallLattice{H}}(A^{\trop}) \arrow[rr,"\sim"] && \Sigma\big(M_{H,k}(A)\big)
    \end{tikzcd}
\end{equation*}
commute. 
\end{maintheorem}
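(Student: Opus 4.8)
The plan is to reduce the statement to the case $k=1$ and then to exploit the fact that, by the structure theory underlying Theorem \ref{mainthm_SHmoduli}, the fine moduli space $M_{H,1}(A)$ is itself an abelian variety. By Theorem \ref{mainthm_SHmoduli} we have $M_{H,k}(A)=\Sym^k\big(M_{H,1}(A)\big)$, and by the construction in Definition \ref{def:equivalence of vector bundles} the tropical moduli space splits analogously as $M_{H,k}^{\SmallLattice{H}}(A^{\trop})=\Sym^k\big(M_{H,1}^{\SmallLattice{H}}(A^{\trop})\big)$. I would therefore establish that all three operations appearing in the diagram---passing to the essential skeleton $\Sigma(-)$, the retraction $\tau$, and the tropicalization map $\trop$---commute with the formation of $k$-th symmetric powers. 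For tropicalization this is essentially formal. For the essential skeleton the key input is the product formula $\Sigma(X\times Y)=\Sigma(X)\times\Sigma(Y)$ together with equivariant descent along the finite quotient $X^{k}\to X^{k}/S_k=\Sym^k X$: one checks that the canonical weight function is $S_k$-invariant, so that both the skeleton and its retraction descend to the quotient. Granting this, the general statement follows from the case $k=1$.

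For $k=1$ I would first identify $M_{H,1}(A)$ explicitly. By the description recalled before Theorem \ref{mainthm_SHmoduli}, the simple semi-homogeneous bundles of slope $H$ form a single orbit under tensoring by $\Pic^0(A)=\hat A$, with finite stabilizer; hence $M_{H,1}(A)$ is the quotient of $\hat A$ by a finite subgroup, and in particular an abelian variety isogenous to $\hat A$. Since $A$ has totally degenerate reduction, so do $\hat A$ and the isogenous quotient $M_{H,1}(A)$. For an abelian variety with totally degenerate reduction the essential skeleton coincides with the full Berkovich skeleton, that is, with its tropicalization, and the canonical retraction $\tau$ agrees with the tropicalization map $\trop$. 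Thus the $k=1$ statement amounts to a natural identification $M_{H,1}^{\SmallLattice{H}}(A^{\trop})\cong M_{H,1}(A)^{\trop}$ that is compatible with the two tropicalization maps out of $M_{H,1}(A)^{\an}$.

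This last identification is the heart of the matter and, I expect, the main obstacle. It requires matching the purely tropical construction of $M_{H,1}^{\SmallLattice{H}}(A^{\trop})$---a real torus of the form $N_\R/\SmallLattice{H}$ assembled from tropical semi-homogeneous bundle data---with the cocharacter-lattice description of the skeleton of the abelian variety $M_{H,1}(A)$. Concretely, I would use the non-Archimedean Appell--Humbert uniformization developed in the earlier sections to describe the isogeny $\hat A\to M_{H,1}(A)$ on the level of uniformizing tori and lattices, and then verify that the resulting sublattice of $N_\R$ is exactly $\SmallLattice{H}$ (Definition \ref{smalllattice{H}}), so that $M_{H,1}(A)^{\trop}=N_\R/\SmallLattice{H}$. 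The delicate point here is keeping careful track of the finite-index data relating $\hat A$, the lattice $\SmallLattice{H}$, and the class $H$.

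Once the lattices are matched, commutativity of the diagram for $k=1$ is immediate from the equality $\tau=\trop$ noted above, and the naturality of the isomorphism is inherited from the functoriality of tropicalization for the uniformized abelian variety $M_{H,1}(A)$. Propagating this through the symmetric-power reduction then yields the general case, the only remaining subtlety being the equivariant descent of the essential skeleton and its retraction through the quotient singularities of $\Sym^k\big(M_{H,1}(A)\big)$.
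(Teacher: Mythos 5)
Your overall architecture---reduce to $k=1$ by compatibility of essential skeletons with symmetric powers, then handle $k=1$ by uniformizing the moduli space, which is (a torsor under) an abelian variety with totally degenerate reduction isogenous to $A^\vee$---is exactly the paper's (the symmetric-power step is \cite[Proposition 6.1.11]{BrownMazzon}, cited verbatim in the proof of Theorem \ref{thm:skeleton in general case}). But the identification you yourself flag as ``the heart of the matter'' is both left unresolved and stated incorrectly. You claim that $M_{H,1}^{\SmallLattice{H}}(A^{\trop})$ is ``a real torus of the form $N_\R/\SmallLattice{H}$'' and that the task is to verify $M_{H,1}(A)^{\trop}=N_\R/\SmallLattice{H}$. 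Neither is right: $N_\R/\SmallLattice{H}$ is the tropicalization of the covering abelian variety $A_H$ (equivalently of $T^\an/\SmallLattice{H}$) on which the relevant line bundles live, and the moduli space is \emph{dual} to it. By Proposition \ref{prop:Moduli of Gamma-compatible tropical vector bundles} the tropical moduli space is $\Hom(\SmallLattice{H},\R)\big/\big(M+H(\Lambda)\big)$---a quotient of the dual vector space by an \emph{enlarged} lattice---and by Proposition \ref{prop:description of S_H} the skeleton of $M_{H,1}(A)\cong A^\vee/\Sigma(H)$ is $\Hom(\SmallLattice{H},\R)\big/\MLargeLattice{H}$; the isomorphism of the theorem then comes from the lattice identity $\MLargeLattice{H}=M+H(\Lambda)$ of Lemma \ref{lem:short exact sequence for large M}. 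A real torus and its dual carry different integral affine structures and admit no natural isomorphism, so an argument aimed at producing $N_\R/\SmallLattice{H}$ on both sides cannot be fixed by bookkeeping: it targets the wrong space, and the finite-index data you propose to track would never close up.

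There is a second gap: even granting the correct identification, your claim that commutativity for $k=1$ is ``immediate from the equality $\tau=\trop$'' conflates two different maps called $\trop$. The retraction $\tau$ agrees with the tropicalization of $M_{H,1}(A)$ defined through \emph{its own} uniformization (this is Theorem \ref{thm:tropicalization of line bundles is retraction} applied to the torsor $\Pic^H(A_H)$), whereas the map $\trop$ in the statement is defined on modular points by writing a simple semi-homogeneous bundle as $f_*L$ for a free cover $f$ and tropicalizing the line bundle $L$. That these two maps agree is precisely the content of the proof of Theorem \ref{thm:skeleton in simple case}: the paper checks it on the dense set of $K$-points via $\psi_H$ and the compatibility of line-bundle tropicalization with pull-backs (Lemma \ref{lem:pull-backs of line bundles commute with tropicalization}). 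This verification, together with the uniformization result Proposition \ref{prop:description of S_H} that underlies it, is the real work of the theorem and is absent from your proposal.
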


In the case $H=0$ (and thus $r=k$) the moduli space $M_{0,r}(A)$ is isomorphic to the moduli space of semistable vector bundles of rank $r$ on $A$ with vanishing Chern classes. In this case, the moduli space $M_{0,r}^{^{\SmallLattice{H}}}(A^{\trop})$ may be identified with the \emph{main component} $M_{0,r}^{\main}(A^{\trop})$ of a (naively defined) moduli space $M_{0,r}(A^{\trop})$ of homogeneous tropical vector bundles of rank $r$ and with trivial Neron-Severi class (see Section \ref{section_M0r} below for details). So Theorem \ref{mainthm_skeleton=tropicalization}, in particular, describes the essential skeleton of the moduli space of semistable vector bundles with vanishing Chern classes.

Theorem \ref{mainthm_skeleton=tropicalization} is a generalization of \cite[Theorem D]{GrossUlirschZakharov}, which proves an analogous result on Tate curves (the case $g=1$).  %and Corollary \ref{maincor_skeleton=tropicalization}
It forms another addition to the dictionary between non-Archimedean skeletons of algebraic moduli spaces and their tropical analogues. The pioneering works in this direction are \cite{BakerRabinoff}, which deals with the  Jacobian, and \cite{ACP}, which deals with the moduli space of curves.

\subsection*{Representations, homogeneous bundles, and tropicalization} Homogeneous bundles play an important role in the representation theory of the fundamental group of an abelian variety. 
For an abelian variety $A$ with totally degenerate reduction, we write $A^{\an}=T^{\an}/\Lambda$ for an algebraic torus $T=\G_m^g$, with $\Lambda$ the analytic fundamental group of $T^{an}$. 
 
 We see in Section \ref{section_charactervariety} below that for semisimple representations the characterization of line bundles on $A$ due to Gerritzen \cite{Gerritzen}, gives rise to
a natural surjective analytic morphism
\begin{equation*}
\eta_A^{\an}\colon X_r(\Lambda)^{\an}\longrightarrow M_{0,r}(A)^{\an}
\end{equation*}
from the (analytification of the) \emph{character variety} 
\begin{equation*}
X_r(\Lambda)=\Hom(\Lambda,\GL_r)\sslash{\GL_r}
\end{equation*}
of $\Lambda$ to the (analytification of the) moduli space $M_{0,r}(A)$ of homogeneous bundles of rank $r$ on $A$ (or equivalently the moduli space of semistable bundles with vanishing Chern classes). 

In Sections \ref{section_tropcharvar} and \ref{section_tropcharvarII} below we construct a tropical analogue $X^{\trop}_r(\Lambda)$ of the character variety $X_r(\Lambda)$ as well as a natural tropicalization map $\trop\colon X_r(\Lambda)^{\an}\rightarrow X^{\trop}_r(\Lambda)$ that exhibits the main component $X^{\trop}_r(\Lambda)^{\diag}$ of $X^{\trop}_r(\Lambda)$ (parametrizing diagonalizable representations) as the essential skeleton $\Sigma\big(X_r(\Lambda)\big)$ of the open Calabi-Yau variety $X_r(\Lambda)$. In other words, we find an isomorphism $X^{\trop}_r(\Lambda)^{\diag}\xrightarrow{\sim}\Sigma\big(X_r(\Lambda)\big)$ that identifies the tropicalization map $\trop$ with the retraction $\tau$ to $\Sigma\big(X_r(\Lambda)\big)$.

The following Theorem \ref{mainthm_troprep} provides us with a tropical analogue of $\eta_A^{\an}$ that is compatible with the process of tropicalization.

\begin{maintheorem}[see Corollary \ref{cor_tropicalcorrespondence} and Proposition \ref{prop_compatibilityrep->bundle}]\label{mainthm_troprep}
There is a natural surjective morphism  
\begin{equation*}\eta_{A}^{\trop}\colon 
X^{\trop}_r(\Lambda)\longrightarrow M_{0,r}(A^{\trop})
\end{equation*}
(of integral affine orbifolds) that is given by associating to every tropical representation $\rho\colon\Lambda\rightarrow \GL_n(\T)$ 
 a  tropical homogeneous vector bundle $E(\rho)$ and which naturally agrees with $\eta_A^{\an}$ on the main components.  %
\end{maintheorem}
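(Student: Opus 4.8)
The plan is to construct $\eta_A^{\trop}$ first on the level of objects, by producing from each tropical representation $\rho\colon\Lambda\to\GL_r(\T)$ a tropical homogeneous bundle $E(\rho)$, and then to identify this assignment with a morphism of the associated moduli spaces. The starting point is the observation that an invertible tropical matrix is a monomial matrix, i.e.\ the tropical product of a permutation matrix and a diagonal matrix with finite entries. Consequently $\rho$ determines an underlying permutation representation $\Lambda\to S_r$ together with compatible scaling (cocycle) data. Since $\Lambda\cong\Z^g$ is abelian and $S_r$ is finite, the permutation representation decomposes $\{1,\dots,r\}$ into finitely many $\Lambda$-orbits, and the stabilizer $\Lambda'\le\Lambda$ of a point in an orbit of size $m$ is a sublattice of index $m$. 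First I would treat one orbit at a time: the inclusion $\Lambda'\hookrightarrow\Lambda$ induces a finite covering (tropical isogeny) $\pi\colon N_\R/\Lambda'\to N_\R/\Lambda=A^{\trop}$ of degree $m$, and the scaling data of $\rho$ restricted to $\Lambda'$ defines a tropical line bundle $L$ on $N_\R/\Lambda'$. Setting $E(\rho)$ to be the direct sum over the orbits of the pushforwards $\pi_\ast L$, in the sense of the tropical framework of Section~\ref{section_tropicalvectorbundles}, produces a tropical homogeneous bundle of rank $r$, in exact parallel with Mukai's structural description $E\cong f_\ast L$ used throughout the paper.

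Second, I would upgrade this to a morphism of moduli spaces. Passing to equivalence classes of representations on the source and of bundles on the target, and checking that conjugate representations yield isomorphic bundles, the assignment $\rho\mapsto E(\rho)$ descends to a map $\eta_A^{\trop}\colon X^{\trop}_r(\Lambda)\to M_{0,r}(A^{\trop})$. That this is a morphism of integral affine orbifolds is a local statement: on the diagonalizable stratum it is, up to the symmetric-group quotients defining both sides, the $r$-th symmetric power of the integral-affine quotient map $M_\R\to M_\R/\Lambda^\vee$ sending a tropical character of $\Lambda$ to the tropical line bundle it defines, which is manifestly integral affine; on the strata with nontrivial permutation part one records in addition the combinatorial isogeny datum, and the pushforward is again integral affine because $\pi$ is. Surjectivity is then the tropical counterpart of Mukai's structure theorem: every tropical homogeneous bundle is a direct sum of pushforwards of tropical line bundles along tropical isogenies, and any such presentation can be repackaged as a monomial representation $\rho$ with $E(\rho)$ isomorphic to the given bundle. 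This is the content I would extract as Corollary~\ref{cor_tropicalcorrespondence}.

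Third, and this is where I expect the real work to lie, I would prove compatibility with the analytic morphism $\eta_A^{\an}$ on the main components. The strategy is to tropicalize both the source and the target of $\eta_A^{\an}$ and to verify that the combinatorial recipe above computes exactly the tropicalization of Gerritzen's analytic recipe. Concretely, on the diagonalizable locus a semisimple representation $\Lambda\to\GL_r(K)$ is conjugate to a sum of characters $\Lambda\to\G_m$, and under tropicalization each such character maps to its valuation, a tropical character $\Lambda\to\R$; the analytic bundle that $\eta_A^{\an}$ produces is, via Gerritzen's description, the descent of a sum of multiplicative line bundles, whose tropicalization is precisely the sum of tropical line bundles attached to these valuations. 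Matching the two therefore reduces to the already-established compatibility of non-Archimedean uniformization of the dual with tropicalization, i.e.\ that the quotient map $\hat T^{\an}\to\hat A^{\an}$ tropicalizes to $M_\R\to M_\R/\Lambda^\vee$ and retracts to the skeleton accordingly. Assembling these identifications, together with the identifications of $X^{\trop}_r(\Lambda)^{\diag}$ and $M_{0,r}^{\main}(A^{\trop})$ with the respective essential skeletons established in the preceding sections, yields the commuting square relating $\eta_A^{\an}$, $\eta_A^{\trop}$, and the two tropicalization maps; this is Proposition~\ref{prop_compatibilityrep->bundle}.

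The main obstacle, as indicated, is this last compatibility statement: one must carefully track the uniformization data (the automorphy factors in Gerritzen's description) through tropicalization and confirm that they match the purely combinatorial pushforward data, while simultaneously controlling the passage to $\GL_r$-quotients and symmetric powers on both sides, so that the comparison is well defined on equivalence classes rather than merely on objects.
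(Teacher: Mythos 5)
Your proposal is correct and follows essentially the same route as the paper: you construct $E(\rho)$ from the permutation/scaling decomposition of monomial matrices in $\GL_r(\T)$ (this is the content of Proposition \ref{prop_NarasimhanSeshadri}, which the paper phrases via the equivalence between torsors with constant transition maps and pairs of a free cover with a line bundle), you read off the morphism of integral affine orbifolds and its surjectivity from the stratum-by-stratum descriptions of $X_r^{\trop}(\Lambda)$ and $M_{0,r}(A^{\trop})$ (Corollary \ref{cor_tropicalcorrespondence}), and you prove agreement with $\eta_A^{\an}$ on the main components by reducing to the rank-one case, where it is the compatibility of the non-Archimedean uniformization of the dual with tropicalization, and then passing to symmetric powers (Proposition \ref{prop_compatibilityrep->bundle}). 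The only cosmetic difference is that you organize the indecomposable summands by orbits and stabilizers of the underlying permutation representation rather than by connected components of the associated free cover, which is the same decomposition in different language.
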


In other words, we may summarize our results by saying that the natural diagram
\begin{equation*}\begin{tikzcd}
X_r(\Lambda)^\an \arrow[rrrr,"\eta_A^{\an}"]\arrow[rd,"\tau"']\arrow[rdd,bend right,"\trop"'] &&&& M_{0,r}(A)^{\an}\arrow[ld,"\tau"]\arrow[ldd,bend left,"\trop"]\\
& \Sigma\big(X_r(\Lambda)\big) \arrow[rr,"\eta_A^{\an}"] && \Sigma\big(M_{0,r}(A)\big) &\\
& X^{\trop}_r(\Lambda)^{\diag} \arrow[rr,"\eta_{A}^{\trop}"]\arrow[u,"\sim"] \arrow[d,"\subseteq"']&& M_{0,r}^{\main}(A^{\trop})\arrow[u,"\sim"']\arrow[d,"\subseteq"] &\\
& X_r^{\trop}(\Lambda) \arrow[rr,"\eta_{A}^{\trop}"] && M_{0,r}(A^{\trop}) &
\end{tikzcd}\end{equation*}
commutes.

 A $p$-adic analogue of the Corlette-Simpson correspondence on abeloid varieties was proven in \cite{HeuerMannWerner}. In this $p$-adic setting, on the representation side continuous representations of the (pro-finite) \'etale  fundamental group are considered. Nevertheless,  working with only ``half'' of the fundamental group in this article provides us with a connection to tropical geometry.

\subsection*{Acknowledgements} The authors would like to thank Luca Battistella, Barbara Bolognese, Michel Brion, Francesca Carocci, Camilla Felisetti, Emilio Franco, Johannes Horn, Katharina H\"ubner, Arne Kuhrs, Alex K\"uronya, Chunyi Li, Mirko Mauri, Dhruv Ranganathan, and Dmitry Zakharov for helpful conversations and discussion en route to this article. 
We also thank the referee of this article for many useful comments. Particular thanks are due to Jakob Stix, who helped us clarify the role of the moduli space in Section \ref{subsec:results on semi-homogeneous bundles} and Tyler Lane, who spotted an inaccuracy in an earlier version of Theorem \ref{mainthm_SHmoduli} and explained his insights in several subsequent discussions.

%\martin{Thank Tyler Lane}

This project has received funding from the Deutsche Forschungsgemeinschaft (DFG, German Research Foundation) TRR 326 \emph{Geometry and Arithmetic of Uniformized Structures}, project number 444845124, and TRR 358 \emph{Integral Structures in Geometry and Representation Theory}, project number 491392403, as well as from the Deutsche Forschungsgemeinschaft (DFG, German Research Foundation) Sachbeihilfe \emph{From Riemann surfaces to tropical curves (and back again)}, project number 456557832, and the DFG Sachbeihilfe \emph{Rethinking tropical linear algebra: Buildings, bimatroids, and applications}, project number 539867663, within the  
SPP 2458 \emph{Combinatorial Synergies}, as well as from the LOEWE grant \emph{Uniformized Structures in Algebra and Geometry} and from the Marie-Sk\l{}odowska-Curie-Stipendium Hessen (as part of the HESSEN HORIZON initiative).

%%%%%%%%%%%%%%%%%%%%%%%%%%%%%%%%%%%%%%%%%%%%%%%%%%%%%%

\section{Semi-homogeneous bundles and their moduli spaces}
\label{subsec:results on semi-homogeneous bundles}

% \inder{notations introduced: abelian variety A, translation map $T_x$, $\NS(A)$, $\NS(A)_\Q$, $\mu_L(E)$, $\delta(E)$, $\Pic^H(A)$, $\gr(E)$, $\calS_{H}$, $K(E)$, $\Sigma(E)$, $\mc M_{H,k}$, $M_{H,1}(A)$, $M_{0}^r(A)$, $\mc P_H$, $\Sym^r(\widehat{A})$}. \\
% \inder{definitions introduced: (semi)homogeneous vbs on abelian var, Neron-Severi group, $\Pic^H(A)$, degree, $\mu$ slope, slope $\delta(E)$ as an element of $\NS(A)_\Q$, Jordan-Holder filtration, stable, simple, set of isom. classes of simple semi-homogeneous vector bundles with slope $H$ ($\calS_{H}$), moduli functor of semi-homogeneous bundles of rank $ k.n(H)$ on $A$ of slope $H$, universal bundle for $M_{H,1}(A)$}.\\

\vspace{0.2 cm}
In this section we let $K$ be an algebraically closed field of characteristic zero.
     We recall the basic definitions and results related to semi-homogeneous vector bundles on abelian varieties. Throughout this section, (semi)-stability refers to Gieseker (semi)-stability.

\subsection{Semi-homogeneous vector bundles on abelian varieties}

Let $A$ be an abelian variety of dimension $g$  over 
$K$ and write $T_x\colon A\rightarrow A$ for the translation $a\mapsto x+a$ by an element $x\in A(K)$.

\begin{definition}
A vector bundle $E$ on $A$ is called \emph{homogeneous}, if $T_x^\ast E\simeq E$ for every $x\in A(K)$. Line bundles are homogeneous if and only if they are algebraically equivalent to zero, or, equivalently, if their class in the Néron--Severi group $\NS(A)= \Pic(A)/\Pic^0(A)$ is trivial. 

A vector bundle $E$ is called \emph{semi-homogeneous} if for every $x \in A(K)$, there exists a line bundle $L$ (possibly depending on $x$) on $A$ such that $T_x^{*}E = E \otimes L$. 
\end{definition}

 %Let $L$ be an ample line bundle. 
 %Recall that the \emph{$L$-degree} of a vector bundle $E$ on $A$ is defined as $\deg_L(E)=c_1(E).L^{g-1}$ where $c_1(E)$ denotes the first Chern class of $E$. 
 
 %\begin{definition}
%Let $E$ be a vector bundle on $A$ and $L$ be an ample line bundle.
%\begin{enumerate}[(i)]
 %   \item The \emph{$L$-slope} of $E$ is defined to be 
  %  \begin{equation*}
   %     \mu_L(E)=\frac{\deg_L(E)}{r(E)} \ 
    %\end{equation*}
    %where $r(E)$ denotes the rank of the vector bundle $E$.
    %\item We say that $E$ is \emph{L-slope-semistable} if for all subbundles $F$ of $E$, we have $\mu_L(F)\leq \mu_L(E)$. If this inequality is strict for all proper subbundles $F\neq 0$, we say that $E$ is \emph{stable}.
    %\item A vector bundle $E$ is \emph{simple} if the only endomorphisms of $E$ are the scalars i.e.\ $\End(E) = K$. It is easy to see that stable bundles are simple.
%\end{enumerate}
%\end{definition}

%\andreas{Why are we using slope stability when Mukai uses Gieseker stability?}
 
%  For a semistable vector bundle of slope $\mu$ there is a natural 
%  \emph{Jordan--H\"older filtration}
% \begin{equation*}
%     0=V_0\subseteq V_1\subseteq \cdots\subseteq V_s=E
% \end{equation*}
% by subbundles such that the quotients $V_i/V_{i-1}$, called the \emph{Jordan--H\"older factors}, are stable vector bundles of slope $\mu$ for all $i=1,\ldots, s$. We associate to $E$ its \emph{graded vector bundle} 
% \begin{equation*}
%     \gr(E)=\bigoplus_{i=1}^s V_i/V_{i-1} 
% \end{equation*}
% and say that two semistable vector bundles $E$ and $E'$ are \emph{$S$-equivalent} if $\gr(E)\simeq \gr(E')$.

\begin{definition}  Let $\NS(A)_\Q$ denote the Néron--Severi group with rational coefficients.
For a vector bundle $E$ on $A$ we denote its  \emph{slope} by 
\[
\delta(E)=\frac{[\det(E)]}{r(E)}
\]
in $\NS(A)_\Q$. 
By $[\det]$, we mean the class of the determinant in the Néron--Severi group. Therefore fixing the slope for us implies that we are not fixing the determinant but its class. 
\end{definition}

\begin{remark}
Recall that a vector bundle $E$ is \emph{simple} if the only endomorphisms of $E$ are the scalars i.e.\ $\End(E) = K$. By {\cite[Proposition 6.16]{Mukai}} every simple semi-homogeneous bundle is stable.
\end{remark}

The following theorem provides a classification of semi-homogeneous vector bundles on abelian varieties and generalises the classification of vector bundles on elliptic curves given by Atiyah in \cite{Atiyah_elliptic}.

\begin{theorem}[{\cite[Proposition 6.2, Proposition 6.18]{Mukai}}]
\label{thm:decomposition of semi-homogeneous vector bundles}
 Let $E$ be a semi-homogeneous vector bundle. Then there exist simple semi-homogeneous vector bundles $E_1,\ldots , E_k$ with $\delta(E_i)=\delta(E)$ and for every $1\leq i\leq k$ a unipotent vector bundle $U_i$ such that
 \[
 E\cong \bigoplus_{i=1}^k U_i\otimes E_i \ .
 \]
\end{theorem}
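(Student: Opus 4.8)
The statement is a structural decomposition, and the plan is to reduce it to the homogeneous case $H=0$ and then apply the Fourier--Mukai transform. First I would record that, by the Mehta--Nori characterization recalled above, $E$ is $L$-slope-semistable for a suitable polarization; its Jordan--H\"older factors are stable, hence simple, and a direct check using the defining relation $T_x^\ast E\cong E\otimes L_x$ shows that each factor is again semi-homogeneous of slope $\delta(E)=H$. By Theorem \ref{thm:existence of simple semi-homogeneous bundles} every simple semi-homogeneous bundle of slope $H$ has rank $n(H)$ and is of the form $E_0\otimes P$ for a fixed reference bundle $E_0$ and some $P\in\Pic^0(A)$. This identifies the candidate factors $E_i$; the real work is to promote the Jordan--H\"older filtration to an honest direct-sum decomposition carrying the predicted unipotent multiplicities.

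The key reduction is that the semi-homogeneity multiplier depends only on the slope. Concretely, for a semi-homogeneous bundle of slope $H$ the line bundle $L_x$ with $T_x^\ast E\cong E\otimes L_x$ satisfies $L_x^{\otimes r(E)}\cong T_x^\ast\det E\otimes(\det E)^{-1}$, which forces $L_x\in\Pic^0(A)$ and, after comparing with the homomorphism $\phi_H\colon A\to\Pic^0(A)$ attached to $H$ and using that $A$ admits no nonconstant homomorphism to a finite group, gives $L_x\cong P_{\phi_H(x)}$ --- a formula involving only $H$ and $x$. Applying this to both $E$ and $E_0$ shows that $E\otimes E_0^\vee$ has trivial multiplier, i.e.\ is homogeneous. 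I would upgrade this observation to the statement that tensoring with $E_0$ is an equivalence between the category of homogeneous bundles and that of semi-homogeneous bundles of slope $H$; its essential surjectivity yields $E\cong E_0\otimes G$ for a homogeneous bundle $G$, compatibly with direct sums.

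It then remains to decompose the homogeneous bundle $G$, which is where Fourier--Mukai enters. Let $\Phi$ be the transform with kernel the Poincar\'e bundle on $A\times\hat A$. Following \cite{mukai_1981}, a vector bundle on $A$ is homogeneous if and only if it is $\Phi(F)$ for a finite-length sheaf $F$ on $\hat A$. Such an $F$ splits as $\bigoplus_j F_j$ with $F_j$ supported at a single point $\alpha_j\in\hat A$; since $\Phi$ sends the skyscraper $k(\alpha_j)$ to $P_{\alpha_j}\in\Pic^0(A)$ and intertwines translation by $\alpha_j$ with tensoring by $P_{\alpha_j}$, translating $F_j$ to the origin gives $\Phi(F_j)\cong U_j\otimes P_{\alpha_j}$ with $U_j$ an iterated extension of $\Phi(k(0))=\calO_A$, i.e.\ unipotent. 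Hence $G\cong\bigoplus_j U_j\otimes P_{\alpha_j}$, and tensoring with $E_0$ yields $E\cong\bigoplus_j U_j\otimes(E_0\otimes P_{\alpha_j})=\bigoplus_j U_j\otimes E_j$ with $E_j\coloneqq E_0\otimes P_{\alpha_j}$ simple semi-homogeneous of slope $H$.

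The main obstacle is the middle step: proving that $\otimes E_0$ is genuinely an equivalence onto the semi-homogeneous bundles of slope $H$, rather than merely that $E\otimes E_0^\vee$ is homogeneous --- note that tensoring the latter back by $E_0$ returns $E\otimes E_0\otimes E_0^\vee$, not $E$. Making this precise is exactly where the isogeny presentation $E_0\cong f_\ast L$ of \cite[Theorem 5.8]{Mukai} is needed: after pulling back along $f\colon B\to A$ and twisting by $L^{-1}$ one can transport the problem to homogeneous bundles on $B$ and descend, but one must check compatibility of $f^\ast$, $f_\ast$ and the projection formula with the direct-sum decomposition, and verify that the pieces pushed back up are simple of rank exactly $n(H)$ with the correct unipotent factors. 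All of this rests on the multiplier computation of the second step, which is where the hypothesis that every constituent shares the slope $\delta(E)$ enters decisively.
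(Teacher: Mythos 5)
The paper itself offers no proof of this statement: it is quoted verbatim from Mukai (Propositions 6.2 and 6.18), and the only place its argument is engaged is the sketch inside the proof of Theorem \ref{thm:Fourier-Mukai}, whose route differs from yours --- decompose $E$ into isotypic pieces (successive self-extensions of a single $E_i\in\calS_H$) using the $\Ext$-vanishing between non-isomorphic simple semi-homogeneous bundles of the same slope (Lemma \ref{lem:strong simplicity}), then quote Mukai's Theorem 6.19 to identify each isotypic piece with $U_i\otimes E_i$. So your proposal must stand as a free-standing proof, and as such it has a genuine gap, which you flag yourself but do not close: the passage from ``$G'\coloneqq E\otimes E_0^\vee$ is homogeneous'' to ``$E\cong E_0\otimes G$ for some homogeneous $G$''. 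Tensoring $G'$ back with $E_0$ gives $E\otimes\End(E_0)$, not $E$, exactly as you observe; and the missing assertion is, given the structure theorem for homogeneous bundles that you invoke afterwards, \emph{equivalent} to the theorem being proved (take $G=\bigoplus_i U_i\otimes P_i$, where $E_i=E_0\otimes P_i$). Deferring it to an unexecuted isogeny-descent argument therefore leaves the proof circular in effect, not merely unfinished at a routine point.

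The gap can be closed without the isogeny, staying inside your framework. In characteristic zero the trace splitting $\calO_A\to\End(E_0)\to\calO_A$ (the second map being $\tfrac{1}{n(H)}$ times the trace) realizes $E\cong E\otimes\calO_A$ as a direct summand of $G'\otimes E_0\cong\bigoplus_j U_j\otimes(P_{\alpha_j}\otimes E_0)$, which is of the desired shape; Atiyah's Krull--Schmidt theorem then reduces you to checking that $U\otimes E'$ is indecomposable whenever $U$ is indecomposable unipotent and $E'$ is simple semi-homogeneous, and this follows from $\End(U\otimes E')\cong\End(U)$ (a local ring), which is in turn the same vanishing $H^\ast(\text{unipotent}\otimes P)=0$ for nontrivial $P\in\Pic^0(A)$ that underlies Lemma \ref{lem:strong simplicity}. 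Two smaller imprecisions in your multiplier step: for fractional $H$ the homomorphism $\phi_H$ does not literally exist, and $L_x$ is not a well-defined function of $x$ (it is only determined up to the coset of $\Sigma(E)$); the clean formulation is that the closed subgroup $\{(x,L)\mid T_x^\ast E\cong E\otimes L\}\subseteq A\times A^\vee$ surjects onto $A$, and your connectedness argument (``no nonconstant homomorphism to a finite group''), applied to the fibre product over $A$ of these subgroups for $E$ and $E_0$, produces a common multiplier, whence $E\otimes E_0^\vee$ is indeed homogeneous.
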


\begin{definition}
    Let $H\in \NS(A)_\Q$. We define 
    $\calS_{H}$ to be the set of all isomorphism classes of simple semi-homogeneous vector bundles $E$ on $A$  with $\delta(E)=H$.
\end{definition}

The following result gives the non-emptiness of $\calS_H$.

\begin{theorem}[{\cite[Proposition 6.17, Corollary 6.23, Theorem 5.8]{Mukai}}]
\label{thm:existence of simple semi-homogeneous bundles}
 Let $H\in \NS(A)_\Q$. Then there exists at least one simple semi-homogeneous bundle $E$ on $A$ with $\delta(E)=H$; in other words $ \calS_H\neq \emptyset$. If $E,E'\in \calS_H$, then there exists a line bundle $M\in \Pic^0(A)$ with $E\otimes M\cong E'$. Moreover, for every $E\in \mc S_H$, there exists an isogeny $f\colon A'\to A$ and a line bundle $L$ on $A'$ with $f_*L\cong E$.
\end{theorem}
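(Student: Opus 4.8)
The plan is to prove the two assertions separately: existence by producing \emph{any} semi-homogeneous bundle of slope $H$ and extracting a simple factor via Theorem \ref{thm:decomposition of semi-homogeneous vector bundles}, and uniqueness by finding a twist of $E'$ that admits a nonzero map from $E$ and then promoting it to an isomorphism.

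For existence, first I would write $H=\frac1m H_0$ with $H_0\in\NS(A)$ and $m\in\Z_{>0}$, and consider the isogeny $f=[m]\colon A\to A$. Since $[m]^\ast$ acts as multiplication by $m^2$ on $\NS(A)_\Q$, the class $f^\ast H=mH_0$ is integral, hence equal to $c_1(L)$ for some line bundle $L$ on $A$. I would then check that $f_\ast L$ is semi-homogeneous: for $x\in A$ pick $x'\in f^{-1}(x)$; the identity $T_x\circ f=f\circ T_{x'}$ gives a Cartesian square, so flat base change yields $T_x^\ast f_\ast L\cong f_\ast(L\otimes P)$ with $P=T_{x'}^\ast L\otimes L^{-1}\in\Pic^0(A)$, and since the dual isogeny is surjective we may write $P=f^\ast Q$ with $Q\in\Pic^0(A)$, whence the projection formula gives $T_x^\ast f_\ast L\cong f_\ast L\otimes Q$. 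Its slope is $\delta(f_\ast L)=\frac1{\deg f}f_\ast c_1(L)=\frac1{\deg f}f_\ast f^\ast H=H$, where $c_1(f_\ast L)=f_\ast c_1(L)$ by Grothendieck--Riemann--Roch and the étaleness of $f$. Finally Theorem \ref{thm:decomposition of semi-homogeneous vector bundles} decomposes $f_\ast L\cong\bigoplus_i U_i\otimes E_i$ with each $E_i$ simple semi-homogeneous of slope $\delta(E_i)=H$, so any $E_i$ shows $\calS_H\neq\emptyset$.

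For uniqueness, let $E,E'\in\calS_H$. The bundle $E^\vee\otimes E'$ is again semi-homogeneous (this is immediate from the defining relation, which is preserved under duals and tensor products) and has slope $\delta(E^\vee)+\delta(E')=-H+H=0$. Decomposing it via Theorem \ref{thm:decomposition of semi-homogeneous vector bundles} as $\bigoplus_j U_j\otimes P_j$, each simple factor $P_j$ is now a simple semi-homogeneous bundle of slope $0$, hence lies in $\Pic^0(A)$ (such a bundle is homogeneous, and simple homogeneous bundles are degree-zero line bundles \cite{Mukai}). Using that a unipotent bundle has a nonzero global section (being an iterated extension of $\mathcal{O}_A$), the summand indexed by $j$ already yields $\Hom(E,E'\otimes P_j^{-1})=H^0\big(A,E^\vee\otimes E'\otimes P_j^{-1}\big)\neq 0$.

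The hard part is to see that any nonzero $\phi\colon E\to E''\coloneqq E'\otimes P_j^{-1}$ is an isomorphism. Both $E$ and $E''$ are simple semi-homogeneous of slope $H$ and of equal rank, and the crucial---genuinely non-formal---input is that simple semi-homogeneous bundles are stable with respect to any polarization, i.e.\ they are precisely the stable factors occurring in Jordan--Hölder filtrations of semi-homogeneous bundles; this is where the structure theory of \cite{Mukai} is indispensable and cannot be replaced by a formal argument. Granting it, the image of $\phi$ is at once a quotient of the stable bundle $E$ and a subsheaf of the stable bundle $E''$, both of slope $H$, which forces $\phi$ to be injective and surjective and hence an isomorphism. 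Therefore $E\otimes P_j\cong E'$ with $P_j\in\Pic^0(A)$, completing the uniqueness statement.
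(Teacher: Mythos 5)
The paper itself contains no proof of this statement---it is imported verbatim from Mukai with the citation \cite[Proposition 6.17, Corollary 6.23]{Mukai}---so the only meaningful comparison is with Mukai's own argument, which your proof essentially reconstructs, and correctly so in outline. Your existence argument (push a line bundle of class $[m]^*H = mH_0$ forward along $[m]$, verify semi-homogeneity by base change plus the projection formula and surjectivity of $[m]^*$ on $\Pic^0(A)$, compute the slope via Grothendieck--Riemann--Roch, then extract a simple summand with Theorem \ref{thm:decomposition of semi-homogeneous vector bundles}) is sound; note only that it treats Theorem \ref{thm:decomposition of semi-homogeneous vector bundles} as a black box, which is legitimate in this paper's ordering but is not an independent proof, since in Mukai's original development the decomposition results and the existence/uniqueness results are intertwined. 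Your uniqueness argument is precisely Mukai's: $E^\vee\otimes E'$ is semi-homogeneous of slope zero, hence homogeneous, its simple constituents lie in $\Pic^0(A)$, a unipotent summand produces a nonzero element of $\Hom\big(E, E'\otimes P_j^{-1}\big)$, and stability of simple semi-homogeneous bundles (a genuine Mukai input, which you are right to flag as non-formal) upgrades it to an isomorphism.

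One logical slip should be repaired: you assert that $E$ and $E''=E'\otimes P_j^{-1}$ have \emph{equal rank}. That is circular---the fact that all members of $\calS_H$ share the rank $n(H)$ (Definition \ref{definition:rank of H}) is exactly a consequence of the uniqueness statement you are proving. Fortunately the assumption is unnecessary. Stability alone forces any nonzero $\phi\colon E\to E''$ to be injective: otherwise its image is a proper quotient of the stable bundle $E$, hence of slope $>\mu_L(E)=\mu_L(E'')$, while also being a subsheaf of $E''$, of slope $\leq \mu_L(E'')$ by (semi)stability of $E''$, a contradiction. Injectivity plus stability of $E''$ then forces $\rank(E)=\rank(E'')$, since a proper subsheaf of strictly smaller rank and equal slope is forbidden. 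Finally, with equal ranks $r$ one has $c_1(E)=rH=c_1(E'')$ in $\NS(A)_\Q$, hence in $\NS(A)$ as this group is torsion-free, so $\det(E)^{-1}\otimes\det(E'')\in\Pic^0(A)$; the determinant $\det\phi$ is a nonzero global section of this line bundle, and a nontrivial element of $\Pic^0(A)$ has no nonzero sections, so the bundle is trivial, $\det\phi$ is a nonzero constant, and $\phi$ is an isomorphism. With this adjustment, and granting the two Mukai-theoretic inputs you cite (slope-zero semi-homogeneous bundles are homogeneous with simple ones being $\Pic^0$-line bundles, and simple semi-homogeneous bundles are $\mu_L$-stable for every polarization), your proof is complete and faithful to the argument the citation points to.
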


% Using this result we define the \emph{rank of a class $H$} in the Néron--Severi  group $\NS(A)_\Q$.

% \begin{definition}\label{definition:rank of H}
% Let  $H \in \NS(A)_\Q$. By Theorem \ref{thm:existence of simple semi-homogeneous bundles} there exists a simple semi-homogeneous vector bundle $E$ on $H$ with $\delta(E)=H$.  We denote by $n(H)$  the rank of any such simple semi-homogeneous vector bundle. By Theorem \ref{thm:existence of simple semi-homogeneous bundles}, the rank $n(H)$ is well-defined, since any two such bundles coincide up to tensoring with a line bundle in $\Pic^0(A)$. 
% \end{definition}

\begin{nota}
For a semi-homogeneous vector bundle $E$ on $A$ we denote
\[K(E)= \{x \in A\mid T_x^*E\cong E\} \]

and 
\[
\Sigma(E)= \{x\in A^\vee \mid P_x\otimes E \cong E\} \ ,
\]
where $P_x$ is the line bundle on $A$ corresponding to $x$ in the the dual abelian variety $A^\vee$.

Given $H\in \NS(A)_\Q$, we denote $n(H)=r(E)$ and $\Sigma(H)=\Sigma(E)$ for some simple semi-homogeneous vector bundle $E$ with $\delta(E)=H$. By Theorem \ref{thm:existence of simple semi-homogeneous bundles}, both $n(H)$ and $\Sigma(H)$ are well-defined, since any two choices of $E$ coincide up to tensoring with a line bundle in $\Pic^0(A)$. 
\end{nota}

By \cite[Proposition 3.2]{Mukai}, $K(E)$ is a closed subgroup scheme of $A$, and $\Sigma(E)$ is a closed subgroup scheme of  $A^\vee$. In particular both are reduced.

\begin{theorem}[{\cite[Theorem 1.2]{Oda}, \cite[Lemma 3.13]{Mukai}}]
\label{thm:condition on push-forward to be simple}
    Let $f\colon B\to A$ be an isogeny of abelian varieties and let $L$ be a line bundle on $B$. Then $f_*(L)$ is simple if and only if $K(L)\cap \ker(f)=0$. 
\end{theorem}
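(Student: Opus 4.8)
The plan is to prove the statement by computing the $K$-dimension of the endomorphism algebra $\End(f_*L)$ and showing that it equals the order of the finite group $K(L)\cap\ker(f)$; since $f_*L$ is simple precisely when $\dim_K\End(f_*L)=1$, the asserted equivalence then follows at once.

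First I would use that, because $\characteristic K=0$, the isogeny $f$ is finite étale, so that $A=B/G$ for the finite (reduced) subgroup scheme $G=\ker(f)$, which acts on $B$ by the translations $T_g$. From the Cartesian identification $B\times_A B\cong B\times G$ and flat base change one obtains the standard formula
\[
f^*f_*\calF\cong\bigoplus_{g\in G}T_g^*\calF
\]
for any quasi-coherent sheaf $\calF$ on $B$.

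Next I would exploit the ambidexterity of pushforward along a finite étale map: for such $f$ the functor $f^*$ is simultaneously a left and a right adjoint of $f_*$. The right-adjoint property, combined with the formula above applied to $\calF=L$, yields
\[
\End(f_*L)\cong\Hom_B(L,f^*f_*L)\cong\bigoplus_{g\in G}\Hom_B\big(L,T_g^*L\big)\cong\bigoplus_{g\in G}H^0\big(B,L^{-1}\otimes T_g^*L\big).
\]
The decisive input is now the behaviour of sections of degree-zero line bundles: for every $g$ the line bundle $L^{-1}\otimes T_g^*L$ lies in $\Pic^0(B)$ (it is the value $\phi_L(g)$ of the homomorphism $\phi_L\colon B\to B^\vee$ associated to $L$), and a line bundle in $\Pic^0(B)$ admits a nonzero global section if and only if it is trivial, in which case $H^0$ is one-dimensional. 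Hence the $g$-th summand equals $K$ when $T_g^*L\cong L$, i.e.\ when $g\in K(L)$, and vanishes otherwise. Summing over $g\in G$ gives $\dim_K\End(f_*L)=\#\big(K(L)\cap\ker(f)\big)$, and simplicity of $f_*L$ is equivalent to this number being $1$, that is, to $K(L)\cap\ker(f)=0$.

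The steps requiring the most care are the two structural inputs in the middle: the base-change identity for $f^*f_*$ and, above all, the ambidexterity adjunction $\Hom_A(f_*L,-)\cong\Hom_B(L,f^*(-))$, which is what transports the computation on $A$ to one on $B$. (One could equally phrase this via Grothendieck duality, using that the relative dualizing sheaf of an étale morphism is trivial, but the ambidexterity formulation is the cleanest.) Once these are granted, the remainder is the short $\Pic^0(B)$-vanishing computation recalled above.
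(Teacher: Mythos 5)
Your proof is correct. Note first that the paper itself gives no argument for this statement: it is quoted as a known result, with references to Oda and to Mukai's Lemma 3.13, so there is no internal proof to compare against. Your argument is essentially the standard one from that literature: Mukai's proof likewise reduces to showing $\dim_K\End(f_*L)=\#\bigl(K(L)\cap\ker(f)\bigr)$ via the torsor decomposition $f^*f_*L\cong\bigoplus_{g\in\ker(f)}T_g^*L$ (a formula the present paper also uses, e.g.\ in the proof of Lemma \ref{lem:slope of push-forward}) together with the vanishing of cohomology of nontrivial elements of $\Pic^0(B)$. All the individual steps check out: in characteristic zero every isogeny is finite \'etale with reduced kernel, the base-change identity holds, $L^{-1}\otimes T_g^*L=\phi_L(g)\in\Pic^0(B)$ by the theorem of the square, and a nontrivial algebraically trivial line bundle on an abelian variety has no nonzero sections.

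One simplification worth pointing out: the ambidexterity input ($f^!\cong f^*$ for \'etale $f$), while true and adequately justified, is not needed. The ordinary adjunction $\Hom_A(\mathcal G,f_*\mathcal F)\cong\Hom_B(f^*\mathcal G,\mathcal F)$, applied in the \emph{first} variable with $\mathcal G=f_*L$ and $\mathcal F=L$, gives directly
\[
\End_A(f_*L)\;\cong\;\Hom_B\bigl(f^*f_*L,\,L\bigr)\;\cong\;\bigoplus_{g\in\ker(f)}H^0\bigl(B,\,(T_g^*L)^{-1}\otimes L\bigr),
\]
and the rest of your computation goes through verbatim. This keeps the argument within the plain $(f^*,f_*)$ adjunction and avoids any appeal to Grothendieck duality or triviality of the relative dualizing sheaf. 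Finally, be aware that your reduction to the \'etale case genuinely uses the standing characteristic-zero hypothesis; the cited results of Oda and Mukai are formulated so as to cover inseparable isogenies as well, where the endomorphism count would have to be done scheme-theoretically.
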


\begin{lemma}
\label{lem:slope of push-forward}
Let $f\colon B\to A$ be an isogeny of abelian varieties and let $L$ be a line bundle on $B$. Then we have $f^*\delta(f_*L)=L$ in $\NS(B)_\Q$.
\end{lemma}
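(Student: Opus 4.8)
The plan is to reduce everything to a computation of $f^{*}\det(f_{*}L)$. Since $f$ is an isogeny, $f_{*}L$ is locally free of rank equal to the degree $d\coloneqq\deg(f)$, so by definition $\delta(f_{*}L)=\tfrac{1}{d}\,c_{1}\big(\det(f_{*}L)\big)$ in $\NS(A)_{\Q}$, and it is enough to prove that $f^{*}\det(f_{*}L)$ equals $d\cdot L$ in $\NS(B)$. The observation that makes this tractable is that, over an algebraically closed field of characteristic zero, $\ker(f)$ is a finite reduced group scheme, i.e.\ an honest finite abelian group $G$, and $f$ realizes $A$ as the quotient $B/G$ for the action of $G\subseteq B$ on $B$ by translations. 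In particular $f$ is a finite étale $G$-torsor, and I can bring the Galois structure to bear even though $f$ need not be a power of a single isogeny.

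Next I would compute $f^{*}f_{*}L$ by flat base change along the cartesian square obtained by pulling $f$ back along itself. Using the torsor isomorphism $B\times_{A}B\cong G\times B$, under which the two projections correspond to the projection $\mathrm{pr}_{B}$ and to the action map, flat base change yields a canonical decomposition
\[
f^{*}f_{*}L\;\cong\;\bigoplus_{g\in G}T_{g}^{*}L .
\]
Taking determinants gives $f^{*}\det(f_{*}L)\cong\bigotimes_{g\in G}T_{g}^{*}L$. Now I invoke the standard fact that translations act trivially on the Néron--Severi group: for every $g\in G$ one has $T_{g}^{*}L\otimes L^{-1}\in\Pic^{0}(B)$, so $T_{g}^{*}L$ and $L$ have the same class in $\NS(B)$. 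Summing over the $d=|G|$ elements of $G$ therefore gives $f^{*}\det(f_{*}L)\equiv d\cdot L$ in $\NS(B)$, and dividing by $d=r(f_{*}L)$ yields $f^{*}\delta(f_{*}L)=L$ in $\NS(B)_{\Q}$, as desired.

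The only genuinely delicate point is the base-change identification $f^{*}f_{*}L\cong\bigoplus_{g\in G}T_{g}^{*}L$; once this is in place the rest is formal, being nothing more than multiplicativity of $\det$ over direct sums together with the translation-invariance of $\NS(B)$. One could instead bypass the torsor picture by citing the general determinant-of-a-pushforward formula $\det(f_{*}L)\cong \mathrm{Nm}_{f}(L)\otimes\det(f_{*}\calO_{B})$ for a finite flat morphism, but I prefer the torsor computation: it is self-contained and avoids keeping track of the correction term $\det(f_{*}\calO_{B})$, which in any case pulls back to a trivial class in $\NS(B)$ by the same argument (applied to $L=\calO_{B}$).
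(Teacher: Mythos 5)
Your proof is correct and follows essentially the same route as the paper: both reduce to the torsor decomposition $f^*f_*L\cong\bigoplus_{\lambda\in\ker(f)}T_\lambda^*L$ and then conclude via the fact that translates of $L$ are algebraically equivalent, hence equal in $\NS(B)$. The extra details you supply (flat base change for the decomposition, reducedness of $\ker(f)$ in characteristic zero, explicit rank counting) simply make explicit what the paper's proof leaves implicit.
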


\begin{proof}
Since pull-backs commute with determinants we have
\[
f^*\delta(f_*L)=\delta(f^*f_*L) \ .
\]
The isogeny $f$ is a $\ker(f)$-torsor.
Therefore,
\[
f^*f_*L\cong \bigoplus_{\lambda\in \ker(f)} T_\lambda^*L  \ .
\]
As translates of $L$ are algebraically equivalent, the assertion follows.
\end{proof}

Fix a class $H\in\NS(A)_\Q$.
By Theorem \ref{thm:existence of simple semi-homogeneous bundles} there exists a simple semi-homogeneous bundle $E$ of slope $H$, which is of the form $E\cong f_*L$ for an isogeny $f\colon A'\to A$ and a line bundle $L$ on $A'$. The simple semi-homogeneous bundles of slope $H$ are then precisely the bundles of the form $E\otimes M \cong f_*(L\otimes f^*M)$ for $M\in \Pic^0(A)$. By \cite[\S 15, Theorem 1]{mumfordbook}, the pullback $f^\ast: \Pic^0(A) \to \Pic^0(A')$ has finite kernel. The image of $f^\ast$ is therefore a closed subvariety of $\Pic^0(A')$ of the same dimension, which implies that $f^\ast$ is surjective. 
%Moreover, in the first paragraph of the proof of \cite[\S 15, Theorem 1]{mumfordbook}, Mumford shows that for a line bundle $L$ on $A$ we have $f^\ast L\in \Pic^0(A')$ if and only if $L\in \Pic^0(A)$. This implies that the induced morphism of N\'eron-Severi groups  is injective.} 
Hence every simple semi-homogeneous bundle on $A$ of slope $H$ is of the form $f_*(L')$, for a line bundle $L'$ on $A'$ algebraically equivalent to $L$, and, conversely, $f_*(L')$ is simple semi-homogeneous of slope $H$ for all $L'$ algebraically equivalent to $L$.
Moreover, Lemma \ref{lem:slope of push-forward} implies that $L$ has class $f^*H$ in $\NS(A')_\Q$. Because $\NS(A')$ is torsion free by \cite[\S 19, Corollary 2]{mumfordbook}, the pullback $f^*H$ in fact determines the class of $L$ in $\NS(A')$.  

\subsection{Moduli spaces of semi-homogeneous bundles}
\label{subsec:moduli spaces of semi-homogeneous bundles}

As before, let $A$ be an abelian variety over an algebraically closed field 
$K$ of characteristic $0$ and let $H\in \NS(A)_\Q$. Let $k\in \Z_{>0}$ and set $r=k\cdot n(H)$. In what follows, we show that there exists a connected component $S_{H,k}(A)$ in the moduli space of semistable sheaves on $A$ whose points correspond to $S$-equivalence classes of semi-homogeneous bundles of slope $H$ and rank $r$. Denote by $M_{H,k}(A)$ its normalization. In the case $k=1$, we explicitly construct $M_{H,1}(A)\simeq S_{H,k}(A)$ as a torsor  over an abelian variety. For $k>1$, we construct an isomorphism between the symmetric power $\Sym^k(M_{H,1}(A))$ and $M_{H,k}(A)$. 
We begin with some notation.

\begin{nota}\label{nota2}
    For $H\in \NS(A)$ we will denote by $\Pic^H(A)$ the fiber over $H$ along the quotient map $\Pic(A)\to \NS(A)$. Let $\pi^\ast: A^\vee  \rightarrow A^\vee / \Sigma(H)$ be the isogeny with kernel $\Sigma(H)$, and let $\pi: A_H \rightarrow A$ be the dual isogeny, where $A_H$ denotes the dual of $A^\vee / \Sigma(H)$.
\end{nota}
%We denote
%\[
%\begin{split}
%M_{H,1}(A)&\coloneqq \Pic^{\pi^*H}(A_H) \ .
%\end{split}
%\]
%\inder{in new notation we are proving this, (see introduction) $S_{H,k}(A)$ has been replaced by $M_{H,k}(A)$ as was in the introduction and we state the normalization of $M_{H,k}(A)$ when we need to.}
%\andreas{Perhaps we should not actually define $M_{H,k}(A)$ for $k>1$ and always refer to it as $\Sym^k(M_{H,1}(A))$. It is not necessarily a moduli space and thus the notation is misleading. The actual moduli space is $S_{H,k}(A)$, of which $M_{H,k}(A)$ is the normalization.}
%\inder{ $S_{H,k}(A)$ has been replaced by $M_{H,k}(A)$ as was in the introduction and we state the normalization of $M_{H,k}(A)$ when we need to.}\martin{Maybe it will make a comeback. Let's discuss later today.} \andreas{I think we have to bring back $S_{H,k}(A)$ in the paragraph above.}

\begin{proposition}
\label{prop:openness of semi-homogenity}
 Let $\mc F$ be an $S$-flat coherent sheaf on $A\times_K S$ for some finite-type $K$-scheme $S$. Then the set of closed points $s\in S$ such that $\mc F_s$ is semi-homogeneous of slope $H$ and rank $k\cdot n(H)$ is the set of closed points of an open subset of $S$. 
\end{proposition}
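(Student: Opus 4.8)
The plan is to realise the locus in question as an intersection of four conditions on $s$, three of which are open and one of which is open and closed, and then invoke the Mehta--Nori characterisation recalled above. Fix once and for all an ample line bundle $L$ on $A$. By \cite{MehtaNori}, for a vector bundle $E$ on $A$ one has that $E$ is semi-homogeneous if and only if $E$ is $L$-semistable and has \emph{projective Chern class zero}, i.e.\ $c(E)=\big(1+c_1(E)/r\big)^{r}$; here one may use the fixed polarisation $L$ rather than ``some'' polarisation precisely because semi-homogeneous bundles are semistable with respect to every polarisation. Accordingly I consider the four subsets of $S$ consisting of those closed points $s$ such that, respectively, (i) $\mc F_s$ is locally free, (ii) $\mc F_s$ has rank $r=k\cdot n(H)$ and slope $H$, (iii) $\mc F_s$ has projective Chern class zero, and (iv) $\mc F_s$ is $L$-semistable. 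The desired locus is their intersection.

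For (i) I would argue as follows. Since $\mc F$ is $S$-flat, for any closed point $s$ the sheaf $\mc F$ is locally free at a point $(a,s)$ if and only if the fibre $\mc F_s$ is locally free at $a$; hence the non-locally-free locus of $\mc F$ in $A\times_K S$ meets the fibre over $s$ exactly in the non-locally-free locus of $\mc F_s$. As this locus is closed in $A\times_K S$ and $A$ is proper over $K$, its image under the projection $A\times_K S\to S$ is closed, so (i) is open. Conditions (ii) and (iii) only involve the discrete invariants of $\mc F_s$, namely its rank and the algebraic-equivalence classes of its Chern classes. These are locally constant in the flat family $\mc F$: over any curve connecting two points of $S$ the Chern classes of the fibres form an algebraic family of cycles and are therefore algebraically equivalent. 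Thus (ii) and (iii) are both open and closed. Finally, openness of $L$-semistability in a flat family of coherent sheaves over a projective base is a standard result (see e.g.\ the work of Maruyama and of Huybrechts--Lehn); I would apply it on the open locus (i), where the fibres are vector bundles and in particular torsion-free, to conclude that (iv) is open within (i).

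Intersecting (i)--(iv) yields an open subset of $S$, which by the Mehta--Nori characterisation is exactly the locus of $s$ with $\mc F_s$ semi-homogeneous of slope $H$ and rank $r$, as desired. The only genuinely non-formal ingredient is the openness of semistability in (iv); the rest is the properness argument of (i) together with the local constancy of discrete invariants in (ii)--(iii). I expect the point requiring the most care to be the reduction to a single fixed polarisation $L$, which rests on the fact that semi-homogeneous bundles are semistable for all polarisations, so that on the vector-bundle locus the conditions ``$L$-semistable and projective Chern class zero'' and ``semi-homogeneous'' genuinely coincide.
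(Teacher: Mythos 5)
Your proof is correct, but it takes a genuinely different route from the paper's. The paper's own argument never mentions semistability: it applies the inverse Fourier--Mukai transform of Theorem \ref{thm:Fourier-Mukai} in families, reducing the statement to the claim that for a complex $\mc K^\bullet\in D^b\big(M_{H,1}(A)\times_K S\big)$ the locus of points $s$ where the derived fibre $\mathbf L i_s^*\mc K^\bullet$ is a finite sheaf of length $k$ is open, and proves this by combining constructibility (generic flatness plus Noetherian induction) with stability under generalization (via \cite[Lemma 3.31]{Huybrechts}). That argument stays entirely inside machinery the paper builds anyway, and the rank and slope conditions are absorbed into the length-$k$ condition for free. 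Your route instead imports two external inputs, the Mehta--Nori characterization and the openness of $\mu$-semistability in flat families (Maruyama; \cite[Section 2.3]{HuybrechtsLehn}), the latter resting on boundedness of semistable sheaves; what this buys is complete independence from the Fourier--Mukai theorem, so your proof could be placed before it, and a decomposition into four entirely standard open conditions.

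Two of your steps are asserted rather than proved, and while both assertions are true, they deserve justification. First, the reduction to the fixed polarization $L$ requires that semi-homogeneous bundles are $\mu$-semistable with respect to \emph{every} polarization, not merely ``some'' polarization as in the citation of \cite{MehtaNori}; this follows from the structure theory already quoted in the paper: by Theorems \ref{thm:decomposition of semi-homogeneous vector bundles} and \ref{thm:existence of simple semi-homogeneous bundles} one reduces to $E\cong f_*L'$ for an isogeny $f$, where $f^*E\cong\bigoplus_{\lambda\in\ker f}T_\lambda^*L'$ is a direct sum of algebraically equivalent line bundles, hence $\mu$-semistable for every polarization, and semistability descends along $f^*$ by the projection formula. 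Second, your local-constancy argument for conditions (ii) and (iii) is valid only if the identity $c(E)=\big(1+c_1(E)/r\big)^r$ is read modulo algebraic (equivalently numerical) equivalence; this is indeed the correct reading of the Mehta--Nori condition, since even for $E=f_*L'$ the identity fails in the rational Chow ring, and it is exactly modulo algebraic equivalence that Chern classes of fibres in a flat family are locally constant. With these two points made precise, your four-fold decomposition gives a complete and somewhat more elementary proof of the proposition.
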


\begin{proof}
    Being a locally free sheaf is an open condition by \cite[Lemma 2.1.8]{HuybrechtsLehn}, as is the condition of having slope $H$ and rank $k\cdot n(H)$ %\martin{The latter is standard, but is having slope $H$ being an open condition a standard fact as well?}\andreas{The slope lives in the Neron-Severi group, which is defined as the component group of the Picard scheme. So having fixed slope is open almost by definition}. 
    We may thus assume that $\mc F$ is a family of vector bundles of slope $H$ and rank $k\cdot n(H)$. 

    By \cite[Proposition 5.4]{Mukai}, a vector bundle $E$ on $A$ is semi-homogeneous if and only if $\pi^*E$ is a semi-homogeneous vector bundle on $A_H$. If $E$ is of slope $H$, then $\pi^*E$ is of slope $\pi^*H$ and by \cite[Proposition 7.6]{Mukai} we have $\pi^*H\in \NS(A)$. Replacing $A$ by $A_H$ we may thus assume, from the onset, that $H\in \NS(A)$.

    Let $\mc P_H$ denote the universal line bundle on $A\times \Pic^H(A)$ and let
    \begin{equation*}
    \begin{split}
    \Phi \colon D^b\big(\Pic^H(A)\big)&\longrightarrow D^b(A) \\ \mc K^\bullet &\longmapsto  \mathbf R p_{1*} (p_{2}^*\mc K^\bullet\otimes^{\mathbf L} \mc P_H)
    \end{split}
    \end{equation*}
    denote the associated Fourier-Mukai transform. By Theorem \ref{thm:decomposition of semi-homogeneous vector bundles}, semi-homogeneous bundles on $A$ of slope $H$ are precisely the successive extensions of elements of $\Pic^H(A)$. These correspond under $\Phi$ to the successive extension of skyscraper sheaves on $A$, that is to zero-dimensional coherent sheaves on $A$. Applying the inverse $\Phi^{-1}$ to $\mc F$ (this can be done in families, see \cite[Section 1.2.1]{FourierMukaiBook}), it suffices to prove the following: given an element $\mc K^\bullet\in D^b(\Pic^H(A)\times_K S)$, the set $U$ of $s\in S$ such that $\mathbf L i_s^*\mc K^\bullet$ is finite of length $k$ is open. Here, we denote by $i_s\colon \Pic^H(A)\times_K \kappa(s) \to \Pic^H(A)\times_K S$ the inclusion. To prove openness, it suffices to show that $U$ is both constructible and stable under generalization. 

    We first prove that $U$ is stable under generalization. Suppose that $s\in S$ specializes to $t\in U$, let $R$ be the local ring of $t$ in $\overline{\{s\}}$, and let $\iota\colon \Spec R\to S$ denote the inclusion. Then after replacing $S$ by $\Spec R$ and $\mc K^\bullet$ by $\mathbf L \iota^*\mc K^\bullet$, we may assume that $t$ is the unique closed point of $S$ and that $\mathbf L i_t^*\mc K^\bullet$ is of finite length $k$. By \cite[Lemma 3.31]{Huybrechts}, it follow that $\mc K^\bullet$ is isomorphic to a sheaf $\mc L$ that is flat over $S$. Since Hilbert polynomials are locally constant in flat families, it follows that $i_s^*\mc L\cong \mathbf Li_s^*\mc K^\bullet$ is also of finite length $k$. 

    To prove constructibility, by Noetherian induction it suffices  to prove that $U\cap V$ is open for some nonempty open subset $V\subset S$. By generic flatness, we may choose $V$ to be a non-empty open subset such that $\mc H^i(\mc K^\bullet)\vert_V$ is flat over $V$ for all $i\in \Z$. Then for all $s\in S$ we have $\mc H^i(\mathbf Li_s^*\mc K^\bullet)= i_s^*\mc H^i(\mc K^\bullet)$, and therefore the set $W$ consisting of all $s\in V$ such that $\mathbf Li_s^*\mc K^\bullet$ is a sheaf, is open. As $\mc K^\bullet$ is a flat sheaf over $W$ by construction, the set of all $s\in W$ such that $\mathbf L i_s^*\mc K^\bullet$ is a finite sheaf of length $k$ is open.
\end{proof}

\begin{definition}
    We denote by $S_{H,k}(A)$ the open subset of the moduli space of semistable sheaves whose closed points correspond to semi-homogeneous vector bundles of slope $H$ and rank $k\cdot n(H)$ and by  $M_{H,k}(A)$ its normalization.
\end{definition}

By \cite[Proposition 6.13]{Mukai}, semi-homogeneous vector bundles are semistable, and by Theorem \ref{thm:description of moduli spaces} below the points $M_{H,k}(A)$ are in fact in bijection with all $S$-equivalence classes of semi-homogeneous vector bundles of slope $H$ and rank $k\cdot n(H)$. 

We now give an explicit description of $M_{H,1}(A)$ and, subsequently, of $M_{H,k}(A)$. Let $f\colon A'\to A$ be an isogeny such that there exists a line bundle $L$ on $A'$ for which $E=f_*L$ is simple of slope $H$. Following Notation \ref{nota2}, let  $\pi^\ast: A^\vee  \rightarrow A^\vee / \Sigma(H)$ be the isogeny with kernel $\Sigma(H)$, and let $\pi: A_H \rightarrow A$ be the dual isogeny, where $A_H$ denotes the dual of $A^\vee / \Sigma(H)$.  If $M\in \ker(f^*)\subseteq \Pic^0(A)= (A)^\vee(K)$,
then $E\otimes M\cong E$ and hence $M\in \Sigma(H) = \ker(\pi^\ast)$. This implies that $\pi^*$ factors uniquely through $f^*$. Dually, $\pi$ factors uniquely through $f$, say $\pi=f\circ g$. %\martin{The notation seems to be all over the place here...}\inder{I recalled more, is this better?}

 \begin{lemma}
 \label{lem:quotient of picard by Galois group as simple vector bundles} Recall that $\calS_{H}$ denotes the set of all isomorphism classes of simple semi-homogeneous vector bundles $E$ on $A$  with $\delta(E)=H$.
     The map
     \[
     \Pic^{f^*H}(A')(K)\longrightarrow \calS_H
     \]
     given by $L\mapsto f_*(L)$ 
    induces a bijection
     \[
     \Pic^{f^*H}(A')(K)\big/\Aut(A'/A)\xlongrightarrow{\sim} \calS_H \ .
     \]
    
     \end{lemma} 

 \begin{proof}
     We have already seen that every bundle in $\calS_H$ 
     is isomorphic to $f_*(L)$ for some $L\in \Pic^{f^*H}(A')(K)$. If $f_*(L)\cong f_*(L')$ for two line bundles $L$ and $L'$ in $\Pic^{f^*H}(A')(K)$, then 
    \[
    \bigoplus_{g\in \Aut(A'/A)} g^*L\cong f^*f_*L \cong f^*f_* L'\cong \bigoplus_{g\in \Aut(A'/A)}g^*L' \ ,
    \]
     where the first and last isomorphisms exist because $f$ is an $\Aut(A'/A)$-torsor.
     It follows that $L\cong g^*L'$ for some $g\in \Aut(A'/A)$. 
 \end{proof}

\begin{lemma}
 \label{lem:quotient of picard by Galois group as Picard group of large cover}
 The pull-back $g^*$ induces an isomorphism
 \[
 \Pic^{f^*H}(A')\big/\Aut(A'/A)\cong \Pic^{\pi^*H}(A_H)
 \]   

 \end{lemma}

 \begin{proof}
     First note that since $\Pic^{f^*H}(A')$ is projective and $\Aut(A'/A)$ is finite, the geometric quotient $\Pic^{f^*H}(A')\big/\Aut(A'/A)$ exists by \cite[Exposé V, \S1]{SGA1} or \cite[\href{https://stacks.math.columbia.edu/tag/07S7}{Tag 07S7}]{stacks-project}.
     Let $L\in \Pic^{f^*H}(A')(K)$. Then every other $K$-point of $\Pic^{f^*H}(A')$ is represented by $L\otimes f^*M$ for some $M\in \Pic^0(A)$. 
     Suppose $g^*(L)\cong g^*(L\otimes f^*M)$. Then $\pi^*M\cong \mc O_{A_H}$ and hence $M\in \Sigma(H)$. It follows that $f_*(L)\cong f_*(L\otimes f^*M)$. Using Lemma \ref{lem:quotient of picard by Galois group as simple vector bundles}, we conclude that $g^*$ induces a morphism $\Pic^{f^*H}(A')/\Aut(A'/A)\to \Pic^{\pi^*H}(A_H)$ that is bijective on $K$-points. Since $\Pic^{\pi^*H}(A_H)$ is normal, this concludes the proof. 
 \end{proof}

\begin{remark}\label{definitionofpsi}
%Combining Lemma \ref{lem:quotient of picard by Galois group as simple vector bundles} and Lemma \ref{lem:quotient of picard by Galois group as Picard group of large cover}, 
 By Lemma \ref{lem:quotient of picard by Galois group as simple vector bundles} and by Lemma \ref{lem:quotient of picard by Galois group as Picard group of large cover} we have a bijection
\[
\psi_H\colon \calS_{H}\xlongrightarrow{\cong} \Pic^{\pi^*H}(A_H)(K) \ .
\]
We note that a priori, the bijection $\psi_H$ depends on $f$: one first represents $E\in \calS_{H}$ as $f_*(L)$ for some $L\in \Pic^{f^*H}(A')$, and then defines $\psi_H(E)=g^*L$. But then $f^*E\cong \bigoplus_{g\in \Aut(A'/A)} g^* L$ because $f$ is an $\Aut(A'/A)$-torsor, and therefore $\pi^*E\cong \psi_H(E)^{\rk(E)}$ by the argument above. This condition uniquely determines $\psi_H$. 
\end{remark}

%\begin{proof} 
%By Lemma \ref{lem:quotient of picard by Galois group as simple vector bundles}  we have the bijection
%    \[
%    \Pic^{f^*H}(A')(K)\big/\Aut(A'/A)\xlongrightarrow{\sim} \calS_H \ .
%    \]
%    Furthermore, by Lemma \ref{lem:quotient of picard by Galois group as Picard group of large cover} we have an isomorphism
%\[
%\Pic^{f^*H}(A')\big/\Aut(A'/A)\cong \Pic^{\pi^*H}(A_H).
%\]
%\martin{Both this proof and thes statement seems to be a bit trivial, given the Lemmas before. Are we actually using the Proposition anywhere at all?}
%\end{proof}

\begin{theorem}
\label{thm:description of moduli spaces}
Keep notations as above. The space $S_{H,k}(A)$ is a connected component in the moduli space of semi-stable torsion-free sheaves.   
\begin{enumerate}[(i)]
\item When $k=1$, then we have natural isomorphisms
\begin{equation*}
M_{H,1}(A) \cong S_{H,1}(A)\cong\Pic^{\pi^*H}(A_H) 
\end{equation*}
\item When $k\geq 1$, then there is a natural bijective morphism
\begin{equation*}
    \phi_k\colon\Sym^k M_{H,1}(A)\longrightarrow S_{H,k}(A)
\end{equation*}
that induces an isomorphism
\begin{equation*}
    \Sym^k M_{H,1}(A)\xlongrightarrow{\sim} M_{H,k}(A) \ .
\end{equation*}
\end{enumerate}

\end{theorem}

\begin{proof}
Let $\mc P_H$ denote the universal line bundle on $A'\times_K\Pic^{f^*H}(A')$ (whose fiber over a point $[L]\in \Pic^{f^*H}(A')(K)$ is $L$). 
Define
\[
\mc E_1= (f\times \id)_*\mc P_H \ .
\]
This is a vector bundle on
$ A\times_K\Pic^{f^*H}(A')$ whose fiber over a point $[L]\in \Pic^{f^*H}(A')(K)$ is the simple semi-homogeneous vector bundle $f_*L$.
Let 
\[
\phi_1'\colon \Pic^{f^*H}(A')\longrightarrow S_{H,1}(A)
\]
be the associated morphism.
For $\lambda\in \Aut(A'/A)$, the fiber of $\mc E_1$ over $T_\lambda^*[L]=[T_\lambda^*L]$ is isomorphic to the fiber over $[L]$, because we have
\[
f_*(T_\lambda^*L)\cong f_*(T_{-\lambda})_*L=f_*L \ .
\]
It follows that $(\id\times T_{\lambda}^*)^* \mc E_1$ and $\mc E_1$ have isomorphic fibers over all points of $\Pic^{f^*H}(A')$. As $\Pic^{f^*H}(A')$ is reduced, we can conclude from \cite[Theorem 1.8]{Mukai} (see also \cite[Lemma 4.6.3]{HuybrechtsLehn} for the assumption on simplicity over the base) that there exists a line bundle $\mc L_\lambda$ on $\Pic^{f^*H}(A')$ such that 
\[
(\id\times T_{\lambda}^*)^* \mc E_1\cong \mc E_1\otimes p_2^*\mc L_\lambda \ , 
\]
where $p_2$ denotes the projection to $\Pic^{f^*H}(A')$. 
% For $\lambda \in \Aut(A'/A)$  we have 
% \[
% (\id\times T_{\lambda}^* )^* \mc P_H\cong \mc P_H\otimes p_2^*L_\lambda 
% \]
% for some $L_\lambda\in \Pic^0(A')$ by \cite[Lemma 14.1.3]{BirkenHakeLange}, where $p_2$ denotes the projection to $\Pic^{f^*H}(A')$. Therefore, we have 
% \[
% (\id\times T_{\lambda}^*)^* \mc E_1\cong \mc E_1\otimes p_2^*L_\lambda 
% \]
% \andreas{need to be careful of the Birkenhake-Lange lemma. Maybe replace it by a different argument?}
% \inder{not sure about this, what variety is the L.H.S defined on?} for $\lambda\in \Aut(A'/A)$. 
This shows that $\phi_1'$ is $\Aut(A'/A)$-equivariant and thus descends, by Lemma \ref{lem:quotient of picard by Galois group as Picard group of large cover}, to a morphism 
\[
    \phi_1\colon \Pic^{\pi^*H}(A_H) \to S_{H,1}(A) \ . 
\]
For $F\in \mc S_H$, the morphism $\phi_1$ maps the point $\psi_H(F)\in \Pic^{\pi^*H}(A_H)(K)$ to the point of $\Pic^{\pi^*H}(A_H)$ corresponding to $F$. We conclude that $\phi_1$ is bijective on $K$-points.
Moreover, the space $S_{H,1}(A)$ is the image of the proper space $\Pic^{\pi^*H}(A_H)$ and is therefore closed in the moduli space of semistable sheaves, as well as connected. By definition, it is also open, showing that it is a component. By \cite[Proposition 3.1]{Gulbrandsen}, the component $S_{H,1}(A)$ is smooth and, thus, automatically isomorphic to $M_{H,1}(A)$. The bijectivity of $\phi_1$ on $K$-points implies that $\phi_1$ is an isomorphism by Zariski's main theorem.

For the case $k>1$, consider the vector bundle
\[
\mc E_k\coloneqq \bigoplus_{1=1}^k p_i^*\mc E_1
\]
on $(\Pic^{f^*H}(A))^k\times_K A$. The $\Aut(A'/A)$-action on $\Pic^{f^*H}(A)$ and the $S_k$-action on $(\Pic^{f^*H}(A))^k$ combine to an $(S_k\ltimes \Aut(A'/A)^k)$-action and it follows from the $k=1$ case that the morphism
\[
\phi_k'\colon (\Pic^{f^*H}(A))^k\longrightarrow S_{H,k}(A)
\]
induced by $\mc E_k$ descends to a morphism
\[
\phi_k \colon \Sym^k(M_{H,1}(A))=(\Pic^{f^*H}(A))^k /(S_k\ltimes \Aut(A'/A)^k) \longrightarrow S_{H,k}(A) \ .
\]
By Theorem \ref{thm:decomposition of semi-homogeneous vector bundles} and the $k=1$ case, the morphism $\phi_k$ is bijective on $K$-points. 
The statement about being a component follows as in the $k=1$ case. That $\phi_k$ induces an isomorphism between $\Sym^k(M_{H,1}(A))$ and the normalization $M_{H,k}(A)$ follows from Zariki's main theorem.
\end{proof}

\begin{remark}
    Suppose that $K$ is complete with respect to a non-Archimedean absolute value. Because $\phi_k$ is bijective, the same is true for the induced morphism $\phi_k^\an\colon\Sym^k M_{H,1}(A)^{\an}\rightarrow S_{H,k}(A)^{\an}$ of Berkovich analytic spaces by \cite[Proposition 3.4.6]{Berkbook}. Since $\Sym^k(M_{H,1}(A))^\an$ is compact by \cite[Theorem 3.4.8]{Berkbook} and $M_{H,k}(A)^\an$ is separated by \cite[Proposition 3.1.5]{Berkbook}, it follows that $\phi_k^\an$ is a homeomorphism. Hence our result about the skeleton of $M_{H,k}(A)^{\an}$ in Theorem \ref{mainthm_skeleton=tropicalization} is in fact also a statement about $S_{H,k}(A)^{\an}$.
\end{remark}

\section{Tropical and non-Archimedean Appell--Humbert}\label{section_AppellHumbert}

%\inder{notations introduced: uniformization of $A$, $A^{\an}$,   torus $T$ over $K$ $T^{\an}$, algebraic lattice $\Lambda_a$, $\Lambda$ := image of $\Lambda_a$, character lattice $M$, cocharacter lattice $N=\Hom(M,\Z)$, $N_\R=\Hom_{\Z}(M,\R)$ , tropicalization $A^\trop= N_\R/\Lambda$, topological manifold $X$, integral affine structure $\Aff_X$, $\Omega^1_{A^\trop}$, $\NS(A^\trop)$, $\Pic^H(A^\trop)=c_1^{-1}\{H\}$, $(H,l)$, $\Pic^0(A^\trop)$, $(H,r)$, $\LargeLattice{H}$, $\SmallLattice{H}$}. \\
%\inder{definitions introduced:  topological manifold $X$, integral affine structure, tropical line bundle, tropical exponential sequence, $\Omega^1_{A^\trop}$, the tropical Neron-Severi $\NS(A^\trop)$, $\Pic^H(A^\trop)$, factor of automorphy on the tropicalization $(H,l)$, factor of automorphy on the uniformization, line bundle on uniformization, defined $\LargeLattice{H}$ and its important subgroup $\SmallLattice{H}$}.\\

In this section,  let $K$ be an algebraically closed field of characteristic $0$, complete with respect to a non-trivial valuation $\nu\colon K\rightarrow \R\cup\{\infty\}$ and let $A$ be an abelian variety over $K$. 
For a scheme $Y$ locally of finite type over $K$ we denote by  $Y^{\an} $ the associated Berkovich analytic space.
We assume that $A$ has totally degenerate reduction, i.e.\ that there exists a uniformization $A^\an \cong T^\an/\Lambda$ for some torus $T$ over $K$
and some (algebraic) lattice $\Lambda$ in $T(K)$. Let $M$ denote the character lattice of $T$ and let $N=\Hom(M,\Z)$. We have a tropicalization map $T^\an\to N_\R$ given by mapping $x\in T^{\an}$ to the homomorphism $m\mapsto-\log\vert \chi^m\vert_x$ in $N_\R=\Hom_{\Z}(M,\R)$. The lattice $\Lambda \subseteq T(K)$ maps isomorphically to a lattice in  $N_\R$, which by abuse of notation we also denote by $\Lambda$. 
The tropicalization of $A$ is the tropical abelian variety $A^\trop= N_\R/\Lambda$. The tropicalization map on $T$ induces a natural continuous tropicalization map $A^{\an}=T^{\an}/\Lambda\rightarrow A^{\trop}=N_\R/\Lambda$ (see \cite{GublerTropicalAbelian} for details).

By \cite[Proposition 3.4.11]{Berkbook}, we have $\Pic(A)=\Pic(A^\an)$, which allows us to express $\Pic(A)$ explicitly in terms of the uniformization of $A^\an$ via the non-Archimedean Appell-Humbert theorem  \cite{Gerritzen}. Similarly, tropical line bundles on $A^\trop$ can be described explicitly via the tropical Appell-Hubert theorem \cite[Theorem C]{Gross_Shokrieh_Tautological}. We recall these principles in detail below, starting with the tropical side.

Let us recall the notion of tropical line bundles on the integral affine manifold $A^\trop$.

\subsection{Integral affine manifolds}

An \emph{integral affine structure} on a topological manifold $X$ is a sheaf $\Aff_X$ of continuous functions on $X$ such that each $x\in X$ has an open neighborhood $U$ that can be embedded openly into $\R^n$ for some $n\in \Z_{\geq 0}$ in a way that identifies the sections of $\Aff_X$ with functions on (open subsets of) $U$ that are locally of the form $x\mapsto \langle m,x\rangle+ s$ for some $m\in (\Z^n)^\vee$ and $s\in \R$. Here $\langle -,-\rangle \colon (\Z^n)^\vee\times \Z^n\to \Z$ denotes the duality pairing that we extend $\R$-linearly to a pairing $(\R^n)^\vee\times \R^n\to \R$. A topological manifold together with an integral affine structure is called an \emph{integral affine manifold}. A \emph{morphism} between two integral affine manifolds $X$ and $Y$ is a continuous map $f\colon X\to Y$ that induces, via pull-back, a morphism $f^{-1}\Aff_Y\to \Aff_X$. 

If $Y$ is a topological manifold and $f\colon X\to Y$ is a covering space, then an integral affine structure $\Aff_Y$ on $Y$ induces an integral affine structure on $X$ by defining $\Aff_X$ as the sheaf of continuous functions generated by all functions of the form $\phi\circ f\vert_{f^{-1}U}$, where $\phi\in \Gamma(U,\Aff_Y)$ for some open subset $U\subseteq Y$. Conversely, if $(X,\Aff_X)$ is an integral affine manifold such that the deck transformations are morphisms of integral affine manifolds, then there is an induced integral affine structure on $Y$ obtained by defining $\Gamma(U,\Aff_Y)$ for $U\subseteq Y$ open as the set of precisely those continuous functions $\phi\colon U\to \R$ for which $\phi\circ f\vert_{f^{-1}U}\in \Gamma(f^{-1}U,\Aff_X)$. 

\subsection{Tropical line bundles}
A \emph{tropical line bundle} on an integral affine manifold $X$ is an $\Aff_{X}$-torsor. We can identify the set of isomorphism classes of $\Aff_{X}$-torsors with  $H^1(X, \Aff_X)$ in the usual way, allowing us to define the \emph{Picard group} $\Pic(X)$ of $X$ as
\[
\Pic(X) \coloneqq H^1(X, \Aff_{X}) \ .
\]

Let $\R_{X}$ denote the sheaf of locally constant real valued functions on $X$. These functions are affine and we define the tropical cotangent bundle on $X$ via the short exact sequence 
\[
0\longrightarrow \R_{X}\longrightarrow \Aff_{X}\longrightarrow \Omega^1_{X}\longrightarrow 0 \ ,
\]
called the \emph{tropical exponential sequence}. The \emph{first Chern class} $c_1(L)$ of an element $L\in \Pic(X)$ on $X$, as introduced in \cite{MikhalkinZharkov, JellRauShaw}, is the image of $L$ under the morphism
\[
c_1\colon H^1(X, \Aff_{X})\longrightarrow H^1(X,\Omega^1_{X}) 
\]
that is induced by the quotient map. We denote the kernel of this morphism by $\Pic^0{X}$.

\subsection{Factors of automorphy on real tori}

Now let $X=A^\trop$. Any isomorphism $N\xrightarrow{\cong}\Z^n$ makes $N_\R$ an integral affine manifold. This integral affine structure does not depend on the chosen isomorphism. The deck transformations of the covering $N_\R\to A^\trop= N_\R/\Lambda$ are the translations by elements in $\Lambda$, which are automorphisms of the integral affine manifold $N_\R$. Therefore, there is an induced integral affine structure $\Aff_{A^\trop}$ on $A^\trop$. 
For $X=A^\trop$, the sheaf $\Omega^1_{A^\trop}$ is isomorphic to the constant sheaf with values in the character lattice $M$, so we can identify
\[
H^1(A^\trop,\Omega^1_{A^\trop})\cong  H^1(N_\R/\Lambda ,M) \cong \Lambda^\vee \otimes_\Z M \ ,
\]
which can in turn be identified with the group of homomorphisms $H\colon \Lambda\to M$. For $\lambda,\lambda'\in \Lambda$ we denote  by $[\lambda,\lambda']_H^\R$ the real number $\big\langle \lambda, H(\lambda')\big\rangle$, where $\langle -, -\rangle$ denotes the evaluation pairing $N_\R\times M_\R\to \R$. The image of $c_1$ consists precisely of those $H$ whose associated bilinear form $[-,-]^\R_H$ on $\Lambda$ is symmetric (see \cite[p.\ 15]{MikhalkinZharkov} or \cite[Theorem C and  Proposition 7.1]{Gross_Shokrieh_Tautological}). In this case, we say that $H$ is \emph{$\R$-symmetric}.

\begin{definition} Let $A^{\trop}$ be a real torus with integral structure. 
\begin{enumerate}[(i)]
\item The \emph{tropical Néron-Severi group}, denoted $\NS(A^\trop)$ is the group of $\R$-symmetric homomorphisms $\Lambda\to M$. For every $H\in \NS(A^\trop)$ we denote $\Pic^H(A^\trop)=c_1^{-1}\{H\}$.

\item A \emph{factor of automorphy on $A^\trop$} is a pair  $(H,l)$ consisting of an $\R$-symmetric bilinear homomorphism $H\colon \Lambda\to M$ and a morphism $l\in \Hom(\Lambda,\R)$.
\end{enumerate}
\end{definition}

 By \cite[Theorem C]{Gross_Shokrieh_Tautological}, every factor of automorphy $(H,l)$ determines a line bundle on $A^\trop$ which we denote by $L(H,l)$, and every line bundle on $A^\trop$ is of this form. We can recover $H$ from $L(H,l)$ via the identity $c_1\big(L(H,l)\big)= H$. In particular, the class $H$ is uniquely determined by the line bundle $L(H,l)$. The element $l$, on the other hand, is not uniquely determined. But we have $L(H,l)\cong L(H,l')$ if and only if $l_\R-l_\R'$ has integer values on $N$, that is $l_\R-l_\R'\in M$, where the subscript $\R$ indicates that we extend $l$ and $l'$ linearly over $\R$ to $N_\R$. We thus obtain a natural identification
\[
\Pic^0(A^\trop)= \Hom(N_\R,\R)/M = \Lambda^\vee_\R/N^\vee \ .
\]
with the \emph{dual torus} $\Lambda^\vee_\R/N^\vee$.
Here, we identify $\Lambda^\vee_\R$ first with $\Hom(\Lambda,\R)$ and then with $\Hom_\R(N_\R,\R)$.

For each $x\in A^\trop$ denote the translation by $x$ as $T_x\colon A^\trop\to A^\trop,\;\; y\mapsto x+y$.
Translations of line bundles can be described explicitly in terms of factors of automorphy. Namely, if $x \in N_\R$ represents $\overline x\in A^\trop$, then $T_{\overline x}^{-1}L(H,l) =L\big(H, l-H(x)\big)$ by \cite[Proposition 7.5]{Gross_Shokrieh_Tautological}.

\subsection{Line bundles on $A^\an$}

As in the tropical case,  every line bundle on the abelian variety $A^\an \simeq T^\an / \Lambda$ is uniquely determined by a \emph{factor of automorphy}, i.e a pair $(H, r)$ consisting of a homomorphism
\[
H\colon \Lambda  \longrightarrow M
\]
and a map $r\colon \Lambda \to \G_{m,K}$ such that
\begin{equation}
\label{eq:condition on non-Arch factor of automorphy}
\big\langle \lambda , H(\lambda')\big\rangle =\frac{r(\lambda+\lambda')}{r(\lambda)\cdot r(\lambda')} \ ,    
\end{equation}
where $\langle -,-\rangle$ denotes the evaluation pairing of a character in $M$ on an element of $T$. 
We denote $\big\langle \lambda , H(\lambda')\big\rangle$ by $[\lambda,\lambda']_H$. 
We say that a morphism $H\colon \Lambda\to M$ is \emph{$\G_m$-symmetric} if 
\[
[\lambda,\lambda']_H=[\lambda',\lambda]_H
\]
for all $\lambda,\lambda'\in \Lambda$. Note that if $H$ is part of a factor of automorphy, then it is $\G_m$-symmetric because it satisfies \eqref{eq:condition on non-Arch factor of automorphy}. Conversely, every $\G_m$-symmetric bilinear map $H\colon \Lambda \to M$ appears as a factor of automorphy by \cite[Lemma 2.3]{bolu}.  
Because $\nu[\lambda,\lambda']_H = [\lambda,\lambda']_H^\R$, every $\G_m$-symmetric morphism $H\colon \Lambda\to M$ is automatically $\R$-symmetric.

Let $L_{(H,r)}$ denote the line bundle on $A^\an$ corresponding to the factor of automorphy $(H,r)$. We have 
\[
L_{(H,r)} \cong L_{(H',r')}
\]
if and only if $H=H'$ and there exists $m\in M$ such that 
\[
\frac{r'(\lambda)}{r(\lambda)} = \langle \lambda, m \rangle
\]
for all $\lambda\in\Lambda$. 
Moreover, if $x\in T(K)$ and $\overline x$ is its image in $A(K)$, then 
\[
T_{\overline x}^* L_{(H, r)} \cong L_{(H, r')}
\]
where 
\[
\frac{r'(\lambda)}{r(\lambda)} =\big\langle x,H(\lambda)\big\rangle \ .
\]

The elements of $\Pic^0(A)=\Pic^0(A^\an)$ are precisely the translation invariant line bundles, which are precisely those that can be written as $L_{(0,r)}$ for some group homomorphism $r\colon  \Lambda \to \G_m(K)$. It follows that the Néron-Severi group $\NS(A)$ is isomorphic to the group of $\G_m$-symmetric morphisms $\Lambda\to M$.

As noted above, $\G_m$-symmetric morphisms $\Lambda\to M$ are $\R$-symmetric, so there is an inclusion $\NS(A)\to \NS(A^\trop)$. However, as the next example shows that $\NS(A)$ is \emph{not} saturated in $\NS(A^\trop)$. 

\begin{example}
\label{ex:algebraic NS not saturated}
Let $\Lambda_a$ be the lattice spanned by 
\[
\lambda_1=\left(\begin{matrix}
t\\1
\end{matrix} \right),
~ \lambda_2=\left(\begin{matrix}
-1 \\t 
\end{matrix}\right)
\]
in $\G_{m}^2(K)$, where $K$ is a complete algebraically closed extension field of $\C((t))$, and consider the analytic torus $B=\G_m^{2,\an}/\Lambda_a$.
Let $e_1, e_2$ be the standard basis of $N=\Z^2$ with dual basis $e_1^*,e_2^*$, and let $H\colon \Lambda\to M$ be given by $H(\lambda_i)=e_i^*$.
Clearly, $H\in \NS(B^\trop)$, but we have
\begin{equation}
\label{eq:no maximal sublattice}\begin{split}
[\lambda_1, \lambda_2]_H &=1 \qquad \text{as well as} \\
[\lambda_2,\lambda_1]_H &=-1 \ .
\end{split}
\end{equation}
So $H$ is not $\G_m$-symmetric. But $2H$ is $\G_m$-symmetric as $[\lambda_2,\lambda_1]_{2H}=(-1)^2=1$. We conclude that $H\in \NS(B^\trop)\setminus \NS(B)$ with $2H\in \NS(B)$. Note that since $[-,-]_{2H}^\R$ is positive definite, the  analytic torus $B$ is in fact algebraic, that is there exists an abelian variety $A$ over $K$ with $A^\an=B$ (see \cite[Theorem 6.4.4]{Luetkebohmert}). 
\end{example}

In what follows, we identify the elements of $\NS(A)_\Q$ with morphisms $\Lambda\to M_\Q$. 

\begin{definition}\label{definition: Large lattice {H}}
    Let $H\in \NS(A)_\Q$. We define the subgroup $\LargeLattice{H}$ of $\Lambda$ by
    \[
        \LargeLattice{H}=\big\{ \lambda\in \Lambda \ \big\vert \  H(\lambda)\in M \big\}. 
    \]
\end{definition}

\begin{lemma}
\label{lem:maximal lattice on tropical side}
Let $H\in \NS(A)_{\Q}$. Then $\LargeLattice{H}$ has finite index in $\Lambda$.
\end{lemma}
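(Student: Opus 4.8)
The plan is to exhibit an explicit finite-index subgroup of $\Lambda$ sitting inside $\LargeLattice{H}$, from which the claim follows immediately since any subgroup of $\Lambda$ sandwiched between a finite-index subgroup and $\Lambda$ itself automatically has finite index.

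First I would use that $\Lambda$ is a free abelian group of finite rank $g$, so I can fix a basis $\lambda_1,\dots,\lambda_g$ of $\Lambda$. Viewing $H$ as a homomorphism $H\colon\Lambda\to M_\Q$ (as in the convention fixed just before the statement), each value $H(\lambda_i)$ lies in $M_\Q=M\otimes_\Z\Q$, hence is of the form $\tfrac{1}{d_i}m_i$ for some $m_i\in M$ and $d_i\in\Z_{>0}$. Setting $d$ to be the least common multiple of $d_1,\dots,d_g$ (or simply their product), we get $d\cdot H(\lambda_i)\in M$ for every $i$.

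Next, by $\Z$-linearity of $H$, for an arbitrary $\lambda=\sum_i a_i\lambda_i\in\Lambda$ with $a_i\in\Z$ we have
\[
H(d\lambda)=d\cdot H(\lambda)=\sum_{i=1}^g a_i\,\big(d\cdot H(\lambda_i)\big)\in M \ ,
\]
since each summand lies in $M$. This shows $d\lambda\in\LargeLattice{H}$ for all $\lambda\in\Lambda$, i.e.\ $d\Lambda\subseteq\LargeLattice{H}\subseteq\Lambda$. As $d\Lambda$ has index $d^g$ in $\Lambda$, the intermediate subgroup $\LargeLattice{H}$ has finite index, dividing $d^g$.

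There is no genuine obstacle here: the statement is an elementary fact about homomorphisms from a lattice into a $\Q$-vector space, and the only thing to be careful about is the bookkeeping of denominators, which is handled once and for all by clearing them simultaneously via the single integer $d$. The one point worth stating explicitly in the write-up is the identification of $H\in\NS(A)_\Q$ with a homomorphism $\Lambda\to M_\Q$, which is exactly the convention adopted in the line preceding Definition \ref{definition: Large lattice {H}}.
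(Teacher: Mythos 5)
Your proof is correct. It differs from the paper's argument in packaging rather than in substance: the paper observes that $\LargeLattice{H}$ is the kernel of the composite $\Lambda\xrightarrow{H} M_\Q\to M_\Q/M$, so that $\Lambda/\LargeLattice{H}$ embeds into the torsion group $M_\Q/M$; being finitely generated and torsion, this quotient is finite. You instead unfold that abstract step into an explicit construction, clearing denominators on a basis to produce a single integer $d$ with $d\Lambda\subseteq\LargeLattice{H}$, which immediately bounds the index by $d^g$. What your route buys is constructive content (an explicit finite-index sublattice and an explicit bound on $[\Lambda:\LargeLattice{H}]$), and it is in fact the same mechanism the paper itself uses later, in Lemma \ref{lem:Gamma contains lLambda}, to show $l\Lambda\subseteq\SmallLattice{H}$ when $lH\in\NS(A)$. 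What the paper's route buys is brevity and independence from any choice of basis: it needs only that $\Lambda$ is finitely generated and that every element of $M_\Q/M$ has finite order. Both arguments are complete and elementary; your identification of $H\in\NS(A)_\Q$ with a homomorphism $\Lambda\to M_\Q$ matches the convention the paper fixes immediately before Definition \ref{definition: Large lattice {H}}, so no gap arises there.
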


\begin{proof}
The subgroup $\LargeLattice{H}$ of $\Lambda$ is precisely the kernel of the composite 
    \[
    \Lambda\xlongrightarrow{H} M_\Q\longrightarrow M_\Q/M
    \]
    Since every element in $M_\Q/M$ has finite order, the quotient $\Lambda/\LargeLattice{H}$ is a finitely generated abelian group, all of whose elements have finite order. Therefore, the quotient $\Lambda/\LargeLattice{H}$ is finite.
\end{proof}

While  $H\in \NS(A)_\Q$ does not in general induce a pairing $\Lambda\times\Lambda\to \G_m$ (unless $H\in \NS(A)$), it does make sense to define a bilinear pairing
\begin{equation*}\begin{split}
[-,-]_H\colon \Lambda\times \LargeLattice{H}&\longrightarrow \G_m \\ (\lambda,\lambda')&\longmapsto \big\langle \lambda, H(\lambda')\big\rangle 
\end{split}\end{equation*}
that agrees with the pairing defined above if $H\in \NS(A)$.

This leads us to defining the following subgroup of $\LargeLattice{H}$, which will play a key role in section \ref{section_uniform+semihom}.  

 \begin{definition}\label{smalllattice{H}}
    We define the subgroup $\SmallLattice{H}$ of $\LargeLattice{H}$ by
    \[
    \SmallLattice{H}=\big\{\gamma\in \LargeLattice{H}\ \big\vert\ [\gamma ,\lambda]_H=[\lambda,\gamma]_H \text{ for all }\lambda\in \LargeLattice{H}\big\}.
    \]
 \end{definition}
 
 \begin{lemma}
     \label{lem:Gamma contains lLambda}
     Let $l$ be a positive integer such that $lH\in \NS(A)$. Then $l\Lambda\subseteq \SmallLattice{H}$. In particular, the sublattice $\SmallLattice{H}$ has finite index in $\Lambda$.
 \end{lemma}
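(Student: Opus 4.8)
The plan is to prove the containment $l\Lambda \subseteq \SmallLattice{H}$ elementwise: I will show that $l\gamma \in \SmallLattice{H}$ for an arbitrary $\gamma \in \Lambda$. Two things must be checked, namely that $l\gamma$ lies in $\LargeLattice{H}$ and that it satisfies the symmetry condition defining $\SmallLattice{H}$. The first is immediate: since $H$ is additive and $lH \in \NS(A)$, we have $H(l\gamma) = l\,H(\gamma) = (lH)(\gamma) \in M$, so $l\gamma \in \LargeLattice{H}$ by Definition \ref{definition: Large lattice {H}}. In particular the pairing $[\lambda, l\gamma]_H$ (whose second slot must lie in $\LargeLattice{H}$) is defined for every $\lambda$.

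For the symmetry condition I would exploit the bilinearity of the evaluation pairing $\langle -,-\rangle\colon T(K)\times M \to \G_m$ to transfer the scalar $l$ between the two slots of $[-,-]_H$. Concretely, for any $\lambda \in \LargeLattice{H}$ we have $H(\lambda)\in M$, and using $\langle l\gamma, m\rangle = \langle \gamma, m\rangle^{l} = \langle \gamma, lm\rangle$ I would rewrite
\[
[l\gamma,\lambda]_H = \big\langle l\gamma, H(\lambda)\big\rangle = \big\langle \gamma, (lH)(\lambda)\big\rangle = [\gamma,\lambda]_{lH},
\]
and symmetrically, since $H(l\gamma) = (lH)(\gamma)$,
\[
[\lambda, l\gamma]_H = \big\langle \lambda, H(l\gamma)\big\rangle = \big\langle \lambda, (lH)(\gamma)\big\rangle = [\lambda,\gamma]_{lH}.
\]
The essential point is that because $lH\in\NS(A)$, the elements $(lH)(\lambda)$ and $(lH)(\gamma)$ genuinely lie in $M$, so the pairing $[-,-]_{lH}$ is defined on all of $\Lambda\times\Lambda$ and all four expressions above are honest elements of $\G_m$.

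I would then invoke the identification, recalled in the discussion of the non-Archimedean Appell--Humbert description, of $\NS(A)$ with the group of $\G_m$-symmetric morphisms $\Lambda\to M$. Applied to $lH$ this gives $[\gamma,\lambda]_{lH} = [\lambda,\gamma]_{lH}$, and combining with the two displayed identities yields $[l\gamma,\lambda]_H = [\lambda,l\gamma]_H$ for every $\lambda\in\LargeLattice{H}$. By Definition \ref{smalllattice{H}} this is precisely the assertion that $l\gamma\in\SmallLattice{H}$, so $l\Lambda\subseteq\SmallLattice{H}$. The final claim is then formal: $l\Lambda$ has finite index $l^{\rank\Lambda}$ in $\Lambda$, and from $l\Lambda\subseteq\SmallLattice{H}\subseteq\Lambda$ it follows that $[\Lambda:\SmallLattice{H}]$ divides $l^{\rank\Lambda}$, hence is finite.

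I do not anticipate a serious obstacle here; the only genuine care is bookkeeping. One must track which slot of $[-,-]_H$ is permitted to range over all of $\Lambda$ (the first) and which must lie in $\LargeLattice{H}$ (the second), so that every pairing written down is defined, and one must apply bilinearity in the multiplicative group $\G_m$ correctly when sliding the factor $l$ across. The conceptual content is simply that passing from $H$ to $lH$ clears denominators and restores honest $\G_m$-symmetry, while multiplication by $l$ is exactly what carries $\Lambda$ into the locus where this symmetry can be read off.
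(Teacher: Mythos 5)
Your proof is correct and follows essentially the same route as the paper: write the pairing $[l\gamma,\lambda]_H$ as $[\gamma,\lambda]_{lH}$, invoke the $\G_m$-symmetry of $lH\in\NS(A)$, and convert back. The paper's proof is just a terser version of your chain of equalities (it leaves the membership $l\gamma\in\LargeLattice{H}$ and the finite-index conclusion implicit, both of which you spell out correctly).
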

 
 \begin{proof}
    Let $\gamma=l\theta\in l\Lambda$ and let $\lambda\in \LargeLattice{H}$. Then we have
    \[
        [\gamma,\lambda]_H= [\theta,\lambda]_{lH}=[\lambda,\theta]_{lH}=[\lambda,\gamma]_H \ .
    \]
    It immediately follows that $l\Lambda\subseteq \SmallLattice{H}$.
 \end{proof}

\section{Semi-homogeneous tropical bundles on real tori}
\label{section_tropicalvectorbundles}

%\inder{Notations introduced: $\Sigma$, $\T$, $\Gamma$, $M_{H,1}^{\Gamma}$, $M_{H,r}^{\Gamma}$ }\\
%\inder{Definitions introduced:tropical vector bundle, free cover of integral affine manifold, sums, tensor products, pull-backs, push forwards, indecomposable tropical vector bundles, slope of indecomposable vector bundle, semi-homogeneous tropical vector bundle, $\Gamma$-compatible}\\

Let $\Sigma=N_\R/\Lambda$ be a real torus with integral structure and write $M=\Hom(N,\Z)$. In the previous section, we have studied line bundles on $\Sigma$. In this section we introduce and study vector bundles of higher rank on $\Sigma$. 

\subsection{Tropical vector bundles on integral affine manifolds}
Let $X$ be an integral affine manifold, whose sheaf of affine functions is denoted by $\Aff_X$. 
Denote by $\T=\R\cup\{\infty\}$ the semifield of tropical numbers (with $\min$ and $+$ as operations). It is well-known that the group of invertible tropical $r\times r$-matrices precisely consists of all $r\times r$-matrices with exactly one entry in all rows and columns not equal to $\infty$ (see \cite[Lemma 1.4]{Allermann}). In other words, we have $\GL_r(\T)=S_r\ltimes\R^r$. Our approach, here and in \cite{GrossUlirschZakharov}, is to think of a vector bundle on $X$ as a principal $\GL_r(\T)$-bundle in terms of its affine transition maps with integral slopes taking values in $\GL_r(\T)=S_r\ltimes\R^r$.

\begin{definition}
    Let $(X,\Aff_X)$ be an integral affine manifold. A \emph{tropical vector bundle} of rank $r$ on $X$ is an $S_r\ltimes \Aff_X^r$-torsor on $X$.  
\end{definition}

A \emph{morphism} of tropical vector bundles is a morphism of $S_r\ltimes \Aff_X^r$-torsors. In particular, all morphisms of tropical vector bundles are isomorphisms.

\begin{definition}
    A \emph{free cover} of an integral affine manifold $X$ is a morphism $\pi\colon Y\to X$ of integral affine manifolds which is a covering space 
    for the underlying topological spaces and induces an isomorphism
    \[
    \pi^{-1}\Aff_X\longrightarrow \Aff_Y \ .
    \]
\end{definition}

\begin{example}
    The morphism $\pi\colon \R/\Z\rightarrow \R/\Z$ of real tori with integral structure given by
    \[
    x+\Z\longmapsto 2x+\Z
    \]
     is a covering space on the underlying topological spaces but not a free cover: the pull-backs of affine functions via $\pi$ do not generate $\Aff_{\R/\Z}$ because their slopes are always even.
\end{example}

\begin{proposition}\label{prop_vectorbundle=cover+linebundle}
The category of tropical vector bundles of rank $r\in \Z_{\geq 1}$ on an integral affine manifold $X$  is equivalent to the category  of pairs $(Y\xrightarrow{\pi} X,L)$ consisting of a free cover $\pi$ of degree $r$ and a tropical line bundle $L$ on $Y$.
\end{proposition}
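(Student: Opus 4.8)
The plan is to exhibit functors in both directions and show they are mutually quasi-inverse, exploiting the semidirect product decomposition $\GL_r(\T) = S_r \ltimes \R^r$, which globalizes to the sheaf of groups $S_r \ltimes \Aff_X^r$ whose torsors are, by definition, the rank-$r$ tropical vector bundles. The key structural observation is that the projection $q\colon S_r \ltimes \Aff_X^r \to S_r$ onto the (discrete, locally constant) symmetric-group factor separates the ``covering data'' from the ``line bundle data.''

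First I would construct the functor from pairs to bundles by pushforward. Given a free cover $\pi\colon Y \to X$ of degree $r$ and a tropical line bundle $L$ on $Y$, choose an open cover $\{U_\alpha\}$ of $X$ over which $\pi$ splits as a disjoint union $\pi^{-1}(U_\alpha) = \bigsqcup_{i=1}^r V_\alpha^i$ of sheets, each mapping isomorphically onto $U_\alpha$, and over which each $L\vert_{V_\alpha^i}$ is trivialized. On an overlap $U_\alpha \cap U_\beta$ the sheets are matched by a locally constant permutation $\sigma_{\alpha\beta} \in S_r$, and along each matched sheet the line bundle contributes an affine transition function $\ell_{\alpha\beta}^i \in \Aff_X(U_\alpha \cap U_\beta)$, where I use that $\pi$ is a free cover to identify $\Aff_Y$ on a sheet with $\Aff_X$ downstairs. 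The data $(\sigma_{\alpha\beta}, (\ell_{\alpha\beta}^i)_i)$ is precisely a \v{C}ech $1$-cocycle valued in $S_r \ltimes \Aff_X^r$, hence defines a rank-$r$ tropical vector bundle $\pi_\ast L$. I would then check that a different choice of trivializations alters the cocycle by a coboundary, so $\pi_\ast L$ is well-defined up to canonical isomorphism, and that the construction is functorial.

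For the reverse functor I would start from a rank-$r$ tropical vector bundle $E$, i.e.\ an $S_r \ltimes \Aff_X^r$-torsor, and push it forward along $q$ to an $S_r$-torsor $E_{S_r}$. Forming the associated degree-$r$ covering $Y = E_{S_r} \times_{S_r} \{1,\dots,r\}$ and equipping it with the affine structure induced from $X$, the projection $\pi\colon Y \to X$ is a covering space whose affine sheaf is generated by pullbacks, hence a free cover. To produce the line bundle $L$ on $Y$, I would observe that the pullback of $E$ along $\pi$ canonically trivializes the symmetric-group factor, since each point of $Y$ singles out a tautological sheet; this reduces the structure group from $S_r \ltimes \Aff_X^r$ to $\Aff_Y$, and the resulting $\Aff_Y$-torsor is the desired $L$. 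In cocycle terms this reads off $\sigma_{\alpha\beta}$ and $(\ell_{\alpha\beta}^i)_i$ from a cocycle representing $E$, making functoriality transparent.

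The two constructions are visibly inverse at the level of the common cocycle data $(\sigma_{\alpha\beta}, (\ell_{\alpha\beta}^i)_i)$ on a fine enough cover, and I would promote this into natural isomorphisms $G \circ F \cong \id$ and $F \circ G \cong \id$ by building the comparison maps on a common refinement and checking compatibility with gluing. The main obstacle I anticipate is not a single hard idea but the bookkeeping required to make the pushforward independent of all auxiliary choices and to verify the coherence of the natural isomorphisms: the crucial point is that the affine part of the cocycle identity in $S_r \ltimes \Aff_X^r$ has the permutations $\sigma_{\alpha\beta}$ reindexing the affine transition functions, and this is exactly the line-bundle cocycle condition on $Y$. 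It is precisely the semidirect (rather than direct) product structure that records this interaction, so keeping careful track of the reindexing is where the care is needed. Once the dictionary between cocycles valued in $S_r \ltimes \Aff_X^r$ and pairs (free cover, line bundle) is set up cleanly, the equivalence of categories follows formally.
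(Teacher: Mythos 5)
Your proof is correct, but it takes a genuinely different route from the paper. The paper never constructs explicit functors or touches cocycles: it organizes the pairs $(Y\xrightarrow{\pi}U,L)$ over all open subsets $U\subseteq X$ into a fibered category, observes that covers and torsors glue (so this is a stack), that every object is locally isomorphic to the trivial pair $I$ (trivial degree-$r$ cover with trivial torsor), and then invokes the general fact that a neutral gerbe is equivalent to the gerbe of torsors over $\underline{\mathrm{Isom}}(I,I)=S_r\ltimes\Aff_X^r$, the functor being $J\mapsto \underline{\mathrm{Isom}}(I,J)$; the equivalence with tropical vector bundles drops out in one stroke, with all choice-independence and coherence absorbed into the gerbe formalism. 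You instead build both functors by hand: pushforward of a line bundle along a free cover via \v{C}ech cocycles valued in $S_r\ltimes\Aff_X^r$, and conversely extraction of the $S_r$-torsor via the projection $q\colon S_r\ltimes\Aff_X^r\to S_r$ (legitimate, as $\Aff_X^r$ is the normal factor) followed by recovery of the line bundle on the associated cover from the affine part of the cocycle. Your key observation --- that the semidirect-product cocycle identity makes the permutations reindex the affine transition functions, which is precisely the line-bundle cocycle condition upstairs --- is exactly the concrete content of the paper's identification $\underline{\mathrm{Isom}}(I,I)=S_r\ltimes\Aff_X^r$, so the two proofs hinge on the same structural fact. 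What your approach buys is an explicit dictionary (useful when one later computes with factors of automorphy and pushforwards, as the paper does); what it costs is the bookkeeping you yourself flag: well-definedness under refinement of covers and change of trivializations, specification of morphisms in the category of pairs, and coherence of the natural isomorphisms --- all routine, but genuinely required for your argument to be complete, whereas the gerbe argument sidesteps them entirely.
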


\begin{proof}
Let $\mc G$ denote the category fibered in groupoids over the small site $O(X)$ of open subsets of $X$ consisting of pairs $(Y\xrightarrow{\pi}U ,L)$, where $\pi$ is a free cover of degree $r$ of an open subset $U$ of $X$ and $L$ be a tropical line bundle on $Y$.  Note that this is equivalent to the category  consisting of pairs $(Y\xrightarrow{\pi}U,L)$, where $\pi$ is a cover of the underlying topological space $U$ and $L$ is a $\pi^{-1}\Aff_U$-torsor. Since covering spaces and torsors glue, the fibered category $\mc G$ is a stack over $O(X)$. Let $I\in \mc G(X)$ denote the trivial degree-$r$ cover of $X$ equipped with the trivial torsor. Then every element of $\mc G$ is locally isomorphic to $I$. In other words, the stack $\mc G$ is a neutral gerbe on $O(X)$. It follows that there is an equivalence of gerbes
\[
\mc G \xlongrightarrow{\sim} \mathrm{Tors}\big(\underline{\mathrm{Isom}}(I,I)\big)\qquad \textrm{given by}\qquad J\longmapsto \underline{\mathrm{Isom}}(I,J) \ ,
\]
 where $\mathrm{Tors}\big(\underline{\mathrm{Isom}}(I,I)\big)$ denotes the gerbe of $\underline{\mathrm{Isom}}(I,I)$-torsors. In particular, we obtain an equivalence of categories between $\mc G(X)$ and the category of torsors over $\underline{\mathrm{Isom}}(I,I)= S_r\ltimes \Aff_X^r$, which are precisely the tropical vector bundles of rank $r$ on $X$.
\end{proof}

\begin{remark}
Isomorphism classes of tropical vector bundles can be described via transition maps that satisfy the cocycle condition. Given transition maps for a tropical vector bundle $E$, one can explicitly construct a cover $Y\xrightarrow{\pi} X$ and a tropical line bundle $L$ on $Y$ such that $f_*L$ is isomorphic to $X$. This has been carried out on tropical curves in the proof of \cite[Proposition 3.2]{GrossUlirschZakharov}.
\end{remark}

\begin{definition}
    Let $X$ be an integral affine manifold. If $f\colon Y\to X$ is a free cover and $L$ is a tropical line bundle on $Y$, we denote by $E(f,L)$ the tropical vector bundle corresponding to $f$ and $L$ (as in Proposition \ref{prop_vectorbundle=cover+linebundle}). 
    We define direct sums, tensor products, pull-backs, and push-forwards of tropical vector bundles as follows:
    
    \begin{itemize}
        \item The \emph{direct sum} of $E_i=E(Y_i\xrightarrow{f_i} X, L_i)$ for $i\in\{1,2\}$ is given by
        \[
        E_1\oplus E_2 = E\big(Y_1\sqcup Y_2 \xrightarrow{f_1\sqcup f_2}X, L_1\sqcup L_2\big) \ .
        \]
        
        \item The \emph{tensor product} of $E_i=E(Y_i\xrightarrow{f_i} X, L_i)$, $i\in \{1,2\}$ is given by
        \begin{equation*}
            E_1\otimes E_2 = E\big(f_1\times_X f_2, g_1^{-1}L_1\otimes g_2^{-1}L_2\big) \ .
        \end{equation*}
        Here $f_1\times_X f_2$ denotes the cover $Y_1\times_X Y_2\rightarrow X$ induced from both $f_1$ and $f_2$ and $g_i$ is the projection from $Y_1\times_X Y_2$ onto $Y_i$.
        
        \item The \emph{pull-back} of $E=E(Y\xrightarrow{f} X,L)$ along a morphism $g\colon Z\to X$ is given by 
        \[
        g^*E=E\big(Y\times_X Z\xrightarrow{f'} Z, (g')^*L\big) \ , 
        \]
        where $f'$ and $g'$ are the projections from $Y\times_X Z$ to $Z$ and $Y$, respectively.
        
        \item The \emph{pushforward} of $E=E(Y\xrightarrow{f}X,L)$ along a free cover $g\colon X\to Z$ of integral affine manifolds is given by
        \[
        f_*E=E(g\circ f, L) \ .
        \]
        Note that this makes sense of the equality $E=f_*L$.
    \end{itemize}
\end{definition}

\begin{definition}
\label{def:indecomposable vector bundle}
    Let $X$ be an integral affine manifold. We say that a tropical vector bundle $E$ is \emph{indecomposable} if there do not exist tropical vector bundles $F_1$ and $F_2$ of positive rank with $E\cong F_1\oplus F_2$. 
\end{definition}

\begin{lemma}
\label{lem:decomposition into indecomposable}
A tropical vector bundle $E(Y\to X, L)$ on an integral affine manifold $X$ is indecomposable if and only if $Y$ is connected. Moreover, every tropical vector bundle $E$ on $X$ can be uniquely expressed (up to permutation) as a sum $\bigoplus_{i=1}^k E_i$ for some indecomposable vector bundles $E_i$
\end{lemma}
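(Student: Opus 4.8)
The plan is to run everything through Proposition \ref{prop_vectorbundle=cover+linebundle}, which identifies a tropical vector bundle with a pair $(Y\xrightarrow{\pi}X,L)$ and, crucially, turns the direct sum into the \emph{disjoint union} of the underlying covers. Under this dictionary an isomorphism of tropical vector bundles corresponds to an isomorphism of covers over $X$ together with a matching isomorphism of line bundles; in particular the homeomorphism type of $Y$, and hence its set of connected components, is an invariant of $E$. I would first record two elementary facts. First, restricting a free cover $\pi\colon Y\to X$ to an open-and-closed subset (such as a connected component) again yields a free cover, since the defining condition $\pi^{-1}\Aff_X\xrightarrow{\sim}\Aff_Y$ is local on $Y$. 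Second, taking $X$ connected, every connected component of $Y$ surjects onto $X$ with positive degree, so $Y$ has only finitely many components (at most $r$), and the rank of each piece equals its degree as a cover.

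For the first assertion I would argue by contraposition in one direction: if $Y=Y_1\sqcup Y_2$ is disconnected with both pieces nonempty, then each $Y_i\to X$ is a free cover of positive degree by the remark above, and the definition of the direct sum gives $E\cong E(Y_1\to X,L\vert_{Y_1})\oplus E(Y_2\to X,L\vert_{Y_2})$ with both summands of positive rank, so $E$ is decomposable. Conversely, if $E\cong F_1\oplus F_2 = E(Y_1\sqcup Y_2\to X, L_1\sqcup L_2)$ with $F_1,F_2$ of positive rank, then the cover attached to $E$ is isomorphic to $Y_1\sqcup Y_2$, which is disconnected; since connectedness of the cover is an isomorphism invariant, $Y$ could not have been connected. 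This establishes that $E$ is indecomposable exactly when $Y$ is connected.

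For the decomposition statement I would write $Y=\bigsqcup_{i=1}^k Y_i$ as its finite decomposition into connected components; each $Y_i\to X$ is a connected free cover, so $E_i\coloneqq E(Y_i\to X, L\vert_{Y_i})$ is indecomposable by the first part, and the definition of the direct sum yields $E\cong\bigoplus_{i=1}^k E_i$. Uniqueness then reduces to the uniqueness of the decomposition of a topological space into its connected components: given a second decomposition $E\cong\bigoplus_{j=1}^m E_j'$ into indecomposables, the induced isomorphism of covers $Y\xrightarrow{\sim}\bigsqcup_j Y_j'$ carries connected components to connected components, forcing $k=m$ and a permutation $\sigma$ with $Y_i\cong Y_{\sigma(i)}'$ over $X$; restricting the accompanying isomorphism of line bundles to $Y_i$ then gives $E_i\cong E_{\sigma(i)}'$.

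I expect the only genuine subtlety to be the bookkeeping inside the equivalence of Proposition \ref{prop_vectorbundle=cover+linebundle}: one must check that an abstract isomorphism of tropical vector bundles really descends to an honest isomorphism of the underlying covers (together with the line bundles), rather than a mere correspondence at the level of transition data, so that ``number of connected components'' and ``component-wise line bundle'' are honest isomorphism invariants. Once this is in place, the Krull--Schmidt--type uniqueness becomes immediate, since it is transported entirely onto the trivial uniqueness of connected-component decompositions and never needs any statement about local endomorphism rings.
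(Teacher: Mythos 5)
Your proof is correct and follows essentially the same route as the paper: both arguments run through the equivalence of Proposition \ref{prop_vectorbundle=cover+linebundle}, observe that direct sums correspond to disjoint unions of covers (so connected cover means indecomposable, and conversely), and reduce uniqueness of the decomposition to the uniqueness of connected components of the cover. Your version merely spells out the bookkeeping (restriction of free covers to components, invariance of the cover under isomorphism) that the paper's terser proof leaves implicit.
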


\begin{proof}
By definition of sums, the cover associated to a sum $E_1\oplus E_2$ of tropical vector bundles is never connected. Therefore, if $Y$ is connected, then $E(Y\to X,L)$ is indecomposable. To complete the proof, it suffices to show that every tropical vector bundle can be expressed uniquely as a sum of tropical vector bundles corresponding to connected covers. But this is clear because the connected components of a cover are uniquely determined by the cover.
\end{proof}

\subsection{Semi-homogeneous tropical vector bundles}
We now focus on tropical vector bundles on a real torus $\Sigma=N_\R/\Lambda$ with integral structure. Note that the results of this section apply to all real tori with integral structure. We do not need to assume that $N$ and $\Lambda\subseteq N_\R$ are induced from an uniformization of an abelian variety.

\begin{lemma}
\label{lem:covers of tropical Abelian variety}
Let $f\colon \Sigma'\to \Sigma$ be a connected free cover of finite degree. Then there exists a lattice $\Lambda'\subseteq \Lambda$ of finite index such that $f$ is isomorphic to the cover $N_\R/\Lambda'\to N_\R/\Lambda=\Sigma$.
\end{lemma}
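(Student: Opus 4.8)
The plan is to reduce the statement to the classification of connected covering spaces of the torus underlying $\Sigma$ and then to upgrade the resulting topological identification to an isomorphism of integral affine manifolds, using the descent of integral affine structures along coverings recalled in the subsection on integral affine manifolds.

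First I would write $p\colon N_\R\to \Sigma=N_\R/\Lambda$ for the universal covering, whose deck group is $\Lambda$ acting by translations; these translations are integral affine automorphisms of $N_\R$, so $p$ is a free cover and $\Sigma$ carries exactly the integral affine structure descended from $N_\R$ along $p$. Since $N_\R$ is simply connected and locally path connected while $\Sigma'$ is connected and locally path connected, $p$ lifts through $f$ to a covering map $q\colon N_\R\to \Sigma'$ with $f\circ q=p$. I then set $\Lambda'\coloneqq\mathrm{Deck}(q)$. Any deck transformation of $q$ is in particular a deck transformation of $p$, so $\Lambda'\subseteq\Lambda$; and because $N_\R$ is simply connected, $q$ is the (regular) universal cover of $\Sigma'$ and descends to a homeomorphism $\Sigma'\cong N_\R/\Lambda'$ over $\Sigma$. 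Computing the fiber of the induced map $N_\R/\Lambda'\to N_\R/\Lambda$ over a point $[v]$ identifies it with $\Lambda/\Lambda'$, so the number of sheets of $f$ equals $[\Lambda:\Lambda']$; finiteness of $\deg f$ therefore gives that $\Lambda'$ has finite index in $\Lambda$, whence $\Lambda'$ is again a full lattice in $N_\R$ and $N_\R/\Lambda'$ is a real torus with integral structure.

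Next I would verify that $q$ is itself a free cover. This is formal from $f\circ q=p$: composing the comparison maps gives $q^{-1}\Aff_{\Sigma'}\cong q^{-1}f^{-1}\Aff_{\Sigma}=p^{-1}\Aff_{\Sigma}\cong\Aff_{N_\R}$, and one checks this composite is the canonical map, so $q$ is a free cover. Since the deck transformations $\Lambda'$ of $q$ act by integral affine automorphisms of $N_\R$, the converse descent construction from the subsection on integral affine manifolds applies, and the free-cover property of $q$ forces the given integral affine structure on $\Sigma'$ to coincide with the descended one: a continuous local function $\phi$ on $\Sigma'$ lies in $\Aff_{\Sigma'}$ if and only if $\phi\circ q$ lies in $\Aff_{N_\R}$ (the inclusion $\subseteq$ is the free-cover property, and $\supseteq$ follows by passing to an evenly covered open, where $q$ is an integral affine isomorphism onto its image). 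By construction, the identical description holds for $N_\R/\Lambda'$ and its projection $\pr\colon N_\R\to N_\R/\Lambda'$.

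Finally, since $q$ and $\pr$ are both the quotient of $N_\R$ by the same group $\Lambda'$ acting by the same translations, there is a unique homeomorphism $\psi\colon\Sigma'\to N_\R/\Lambda'$ with $\psi\circ q=\pr$, and it intertwines the two descent descriptions of the affine sheaves, hence is an isomorphism of integral affine manifolds. Writing $\pi\colon N_\R/\Lambda'\to\Sigma$ for the natural covering, we have $\pi\circ\psi\circ q=\pi\circ\pr=p=f\circ q$, so surjectivity of $q$ yields $\pi\circ\psi=f$ and $\psi$ is an isomorphism of covers over $\Sigma$. The only genuinely delicate point is the propagation of the free-cover property down the tower $N_\R\to\Sigma'\to\Sigma$ together with the identification of the abstractly given affine structure on $\Sigma'$ with the structure descended from $N_\R$; once this identification of sheaves is established, the isomorphism of covers is immediate.
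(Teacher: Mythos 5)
Your proof is correct and follows the same route as the paper: exhibit $N_\R\to\Sigma$ as the universal cover with deck group $\Lambda$ acting by translations, and invoke the classification of covering spaces to produce a finite-index subgroup $\Lambda'\subseteq\Lambda$ with $\Sigma'\cong N_\R/\Lambda'$ over $\Sigma$. You actually go beyond the paper's two-sentence, purely topological argument by verifying that the free-cover hypothesis forces the given integral affine structure on $\Sigma'$ to coincide with the one descended from $N_\R$ — which is exactly where freeness is needed (the doubling map $\R/\Z\to\R/\Z$ from the paper's own example is a topological cover for which the conclusion fails), a point the paper's proof leaves implicit.
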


\begin{proof}
The projection $N_\R\to \Sigma$ exhibits $N_\R$ as the universal covering space of $\Sigma$ and induces an isomorphism between $\Lambda$ and the fundamental group $\pi_1(\Sigma,0)$ of $\Sigma$. By the classification of covering spaces, the connected finite cover $f$ defines a subgroup $\Lambda'$ of $\pi_1(\Sigma,0)=\Lambda$ of finite index and $\Sigma'$ can be recovered from $\Lambda'$ as the quotient $N_\R/\Lambda'$. 
\end{proof}

Note that as $\Lambda$ is abelian, the cover of $\Sigma$ determined by a sublattice $\Lambda'$ is automatically a $\Lambda/\Lambda'$-torsor.

\begin{remark}\label{identif}
  A finite cover $f\colon \Sigma'=N_\R/\Lambda'\to N_\R/\Lambda=\Sigma$ given by a sublattice $\Lambda'\subseteq \Lambda$ induces a pull-back $f^*\colon \NS(\Sigma)\to \NS(\Sigma')$. The elements of $\NS(\Sigma)$ are $\R$-symmetric homomorphisms $H\colon \Lambda\to M$ and the pull-back of such an $H$ is simply the restriction $H\vert_{\Lambda'}$.  Since $\R$ has no torsion, a homomorphism $H\colon \Lambda\to M$ is $\R$-symmetric if and only if its restriction $H\vert_{\Lambda'}$ is. Now combining this with the fact that $\Lambda'$ has finite index in $\Lambda$ and $\Q$ is divisible, we can conclude that the pull-back induces an isomorphism $\NS(\Sigma)_\Q\xrightarrow{\cong} \NS(\Sigma')_\Q$. 
\end{remark}

\begin{definition}
    Let $E$ be an indecomposable tropical vector bundle on $\Sigma$. Then $E=E(\Sigma'\xrightarrow{f} \Sigma,L)$ for some free cover $f$ with connected domain $\Sigma'$ and some line bundle $L$ on $\Sigma'$. By Lemma \ref{lem:covers of tropical Abelian variety}, we have $\Sigma'=N_\R/\Lambda'$ for some finite-index sublattice $\Lambda'$ of $\Lambda$. As we observed above, the cover $f$ induces an isomorphism
    \[
    \NS(\Sigma)_\Q\xrightarrow{\cong} \NS(\Sigma')_\Q \ .
    \]
    We define the \emph{slope} $\delta(E)$ of $E$ as the unique class in $\NS(\Sigma)_\Q$ pulling back to the class $[L]$ of $L$ in $\NS(\Sigma')_\Q$.
    For a general vector  bundle $E$ on $\Sigma$ with indecomposable summands $E_1,\ldots, E_k$, we define the slope of $E$ as
    \[
    \delta(E)=\frac{\sum_{i=1}^k r(E_i)\delta(E_i)}{r(E)} \ .
    \]
\end{definition}

\begin{remark}
    It is possible to define determinants of tropical vector bundles; the definition given in \cite[Section 2.4]{GrossUlirschZakharov} easily generalizes the case of integral affine manifolds. One can then show that for a tropical vector bundle $E$ on $\Sigma$ one has
    \[
    \delta(E)=\frac{\big[\det(E)\big]}{r(E)} \ .
    \]
\end{remark}

\begin{definition}
For a point $x\in \Sigma$, we denote by $T_x\colon \Sigma\to \Sigma,\; y\mapsto y+x$ the translation by $x$. A tropical  vector bundle $E$ on $\Sigma$ is called \emph{homogeneous} if $T_x^{-1} E \cong E$ for all $x\in \Sigma$. It is called \emph{semi-homogeneous} if for each $x\in \Sigma$ there exists $L\in \Pic^0(\Sigma)$ (possibly depending on $x$) with $T_x^{-1}E \cong E\otimes L$.
\end{definition}

\begin{lemma}
\label{lem:pull-back of translate}
Let $f\colon\Sigma'\to \Sigma$ be a free cover, let $L$ be a line bundle on $\Sigma'$, let $x\in\Sigma$, and let $y\in f^{-1}\{x\}$. Then we have
\[
T_x^{-1} (f_*L)= f_*(T_y^{-1}L)
\]
\end{lemma}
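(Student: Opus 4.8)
The plan is to unwind both sides into the language of pairs \emph{(free cover, tropical line bundle)} supplied by Proposition \ref{prop_vectorbundle=cover+linebundle}, compute each presentation explicitly, and then exhibit an isomorphism of pairs identifying them. Throughout I use that $f$, being a free cover of the real torus $\Sigma=N_\R/\Lambda$, is by Lemma \ref{lem:covers of tropical Abelian variety} (the identification with $N_\R/\Lambda'\to N_\R/\Lambda$) a homomorphism of real tori, so that it commutes with translations.

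First I would rewrite the left-hand side. By definition of pushforward, $f_*L=E(\Sigma'\xrightarrow{f}\Sigma,L)$, and $T_x^{-1}(f_*L)$ is its pullback along the translation $T_x\colon\Sigma\to\Sigma$. Applying the pullback formula, I form the fibre product $P\coloneqq \Sigma'\times_{f,\Sigma,T_x}\Sigma$ and write $T_x^{-1}(f_*L)=E(P\xrightarrow{\pr_2}\Sigma,\pr_1^*L)$, where $\pr_1,\pr_2$ are the projections to $\Sigma'$ and $\Sigma$. The key computation is to trivialize $P$: the map $\alpha\colon\Sigma'\to P$, $s'\mapsto\bigl(s',f(s')-x\bigr)$, is an isomorphism of integral affine manifolds satisfying $\pr_1\circ\alpha=\id_{\Sigma'}$ and $\pr_2\circ\alpha=T_x^{-1}\circ f\colon s'\mapsto f(s')-x$. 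Hence $\alpha^*\pr_1^*L\cong L$, and I obtain the presentation $T_x^{-1}(f_*L)\cong E\bigl(T_x^{-1}\circ f,\,L\bigr)$.

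Next, the right-hand side is directly $f_*(T_y^{-1}L)=E(\Sigma'\xrightarrow{f}\Sigma,\,T_y^{-1}L)$. To finish I compare the two pairs $(T_x^{-1}\circ f,\,L)$ and $(f,\,T_y^{-1}L)$, both of which are covers of $\Sigma$ with total space $\Sigma'$. By the equivalence in Proposition \ref{prop_vectorbundle=cover+linebundle} it suffices to produce an isomorphism of covers $\phi\colon\Sigma'\to\Sigma'$ over $\Sigma$, i.e.\ $f\circ\phi=T_x^{-1}\circ f$, together with an isomorphism $L\cong\phi^*(T_y^{-1}L)$. I claim $\phi=T_{-y}$ works. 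For the cover condition, equivariance of $f$ together with $f(y)=x$ gives $f\circ T_{-y}=T_{-x}\circ f=T_x^{-1}\circ f$. For the line bundle condition, $\phi^*(T_y^{-1}L)=T_{-y}^{-1}(T_y^{-1}L)\cong L$, since $T_y\circ T_{-y}=\id$. The equivalence then yields the desired isomorphism of tropical vector bundles.

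The fibre-product bookkeeping in the first step is routine; the one point that genuinely requires care — and the main potential obstacle — is the translation-equivariance $f\circ T_{-y}=T_{-x}\circ f$. This identity fails for an arbitrary covering map of tori (a translate of a homomorphism), so the argument really depends on presenting $f$ as the quotient homomorphism $N_\R/\Lambda'\to N_\R/\Lambda$ with the given $y$ mapping to $x$. Once that identification is fixed the equivariance is immediate and everything else is formal.
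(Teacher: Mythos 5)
Your proof is correct and is essentially the paper's own argument written out in full: the paper's one-line proof, $T_x^{-1}(f_*L)=(T_{-x})_*(f_*L)=f_*\big((T_{-y})_*L\big)=f_*(T_y^{-1}L)$, rests on exactly the identity $T_{-x}\circ f=f\circ T_{-y}$ that you isolate, and your fibre-product computation together with the cover isomorphism $\phi=T_{-y}$ is just the unwound version of those pushforward manipulations. The equivariance point you flag is indeed the crux; the paper invokes it without comment, implicitly using the presentation of $f$ as a quotient homomorphism provided by Lemma \ref{lem:covers of tropical Abelian variety}.
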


\begin{proof}
The equality 
\[
T_x^{-1}(f_*L) =(T_{-x})_*(f_*L) =f_*\big((T_{-y})_*L\big)= f_*(T_y^{-1} L) \ ,
\]
is a consequence of $T_{-x}\circ f=f\circ T_{-y}$. 
\end{proof}

\begin{lemma}
\label{lem:classification of tropical vector bundles}
Let $f\colon \Sigma'=N_\R/\Lambda'\to N_\R/\Lambda=\Sigma$ be the cover associated to a finite-index sublattice $\Lambda'\subseteq \Lambda$, and let $(H,l)$ and $(H',l')$ be two factors of automorphy for line bundles $\Sigma'$. Then we have $f_*L(H,l)\cong f_*L(H',l')$ if and only if $H=H'$ and $l-l'-H(\lambda)\in M$ for some $\lambda\in \Lambda$.
\end{lemma}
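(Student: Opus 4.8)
The plan is to reduce the statement for the rank-$[\Lambda:\Lambda']$ bundles $f_*L(H,l)$ and $f_*L(H',l')$ to a comparison of line bundles on $\Sigma'$ via the equivalence of categories in Proposition \ref{prop_vectorbundle=cover+linebundle}, and then to invoke the two explicit formulas recalled in Section \ref{section_AppellHumbert}: the description of translates of a line bundle in terms of factors of automorphy, and the criterion for two factors of automorphy to define isomorphic line bundles.

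First I would unwind the pushforward. By definition of the pushforward of tropical vector bundles, $f_*L(H,l)=E(f,L(H,l))$ and $f_*L(H',l')=E(f,L(H',l'))$, so both bundles correspond, under Proposition \ref{prop_vectorbundle=cover+linebundle}, to pairs built on the \emph{same} cover $f$. Unwinding morphisms of pairs in the neutral gerbe $\mc G$ from the proof of that proposition, an isomorphism $E(f,L(H,l))\xrightarrow{\sim}E(f,L(H',l'))$ is the same datum as a pair $(\phi,\alpha)$ consisting of an isomorphism of covers $\phi\colon\Sigma'\to\Sigma'$ over $\Sigma$ and an isomorphism $\alpha\colon L(H,l)\xrightarrow{\sim}\phi^{-1}L(H',l')$ of line bundles on $\Sigma'$. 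Hence $f_*L(H,l)\cong f_*L(H',l')$ if and only if there is a deck transformation $\phi$ of $f$ with $L(H,l)\cong\phi^{-1}L(H',l')$.

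Next I would make the deck transformations explicit. As noted after Lemma \ref{lem:covers of tropical Abelian variety}, since $\Lambda$ is abelian the connected cover $f$ is a $\Lambda/\Lambda'$-torsor, so its deck transformations are precisely the translations $T_{\overline\lambda}$ by classes $\overline\lambda=\lambda+\Lambda'$ with $\lambda\in\Lambda$, each of which is an automorphism of the integral affine manifold $\Sigma'$. Feeding in the translation formula $T_{\overline\lambda}^{-1}L(H',l')=L\big(H',l'-H'(\lambda)\big)$, where $H'(\lambda)\in M_\R$ is the value at $\lambda\in\Lambda\subseteq N_\R$ of the $\R$-linear extension of $H'$, together with the criterion that $L(H_1,l_1)\cong L(H_2,l_2)$ holds exactly when $H_1=H_2$ and $l_1-l_2\in M$, the condition $L(H,l)\cong T_{\overline\lambda}^{-1}L(H',l')$ becomes $H=H'$ together with $l-l'+H(\lambda)\in M$. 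Letting $\phi=T_{\overline\lambda}$ range over all deck transformations then gives: $f_*L(H,l)\cong f_*L(H',l')$ iff $H=H'$ and $l-l'+H(\lambda)\in M$ for some $\lambda\in\Lambda$.

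Finally I would reconcile this with the stated form. Replacing $\lambda$ by $-\lambda$ interchanges $+H(\lambda)$ and $-H(\lambda)$, so the sign is immaterial and the criterion is equivalent to the existence of $\lambda\in\Lambda$ with $l-l'-H(\lambda)\in M$, as claimed; moreover $H(\lambda'')\in M$ for every $\lambda''\in\Lambda'$, so the condition depends only on $\overline\lambda\in\Lambda/\Lambda'$, consistent with ranging over deck transformations. The one point demanding care is the first step: one must check that \emph{every} isomorphism between the two vector bundles is induced by a deck transformation of the single fixed cover $f$, rather than by an isomorphism to some a priori different cover; this is exactly what the gerbe description underlying Proposition \ref{prop_vectorbundle=cover+linebundle} guarantees once morphisms of pairs are fully unwound.
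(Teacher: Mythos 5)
Your proposal is correct and follows essentially the same route as the paper's proof: both reduce, via the equivalence of Proposition \ref{prop_vectorbundle=cover+linebundle}, to the existence of a deck transformation of the fixed cover $f$ (a translation by some $\overline\lambda\in\Lambda/\Lambda'$) carrying one line bundle to the other, and then apply the translation formula for factors of automorphy together with the isomorphism criterion $l_1-l_2\in M$. The only cosmetic differences are that you translate $L(H',l')$ instead of $L(H,l)$ (whence the sign reconciliation) and that you spell out the gerbe-theoretic justification and the well-definedness modulo $\Lambda'$, both of which the paper leaves implicit.
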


\begin{proof}
Tropical vector bundles are equivalent to isomorphism classes of tropical line bundles on finite free covers. Therefore, the pushforward $f_*L(H,l)$ is isomorphic to $f_*L(H',l')$ if and only if there exists an automorphism of $f$ along which $L(H,l)$ pulls back to $L(H',l')$. The automorphisms of $f$ are precisely the translations by the classes of elements $\lambda\in \Lambda$ and the pull back of $L(H,l)$ via a translation by $\lambda$ is given by $L\big(H, l- H(\lambda)\big)$. This is isomorphic to $L(H',l')$ if and only if $H=H'$ and $l-l'-H(\lambda)\in M$. 
\end{proof}

\begin{proposition}\label{prop_charsemihom}
Let $E_1,\ldots, E_k$ be indecomposable tropical vector bundles on $\Sigma$. Then $E=\oplus_{i=1}^k E_i$ is homogeneous if and only if $\delta(E_i)=0$ for all $1\leq i \leq k$. The vector bundle $E$ is semi-homogeneous if and only if $\delta(E_i)=\delta(E_j)$ for all $1\leq i,j\leq k$.
\end{proposition}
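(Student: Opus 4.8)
The plan is to pass to factors of automorphy and reduce both equivalences to a single cardinality comparison. By Lemmas \ref{lem:decomposition into indecomposable} and \ref{lem:covers of tropical Abelian variety} I write each indecomposable summand as $E_i=f_{i*}L(H_i|_{\Lambda_i},l_i)$, where $f_i\colon N_\R/\Lambda_i\to\Sigma$ is the cover associated to a finite-index sublattice $\Lambda_i\subseteq\Lambda$, where $H_i:=\delta(E_i)\in\NS(\Sigma)_\Q$, and where $H_i|_{\Lambda_i}=c_1(L_i)$ is integral. Both translation by a point and tensoring with a line bundle leave the cover $\Lambda_i$ unchanged, so they preserve indecomposability, and both preserve the slope (tensoring because elements of $\Pic^0$ have trivial first Chern class). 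Hence $T_x^{-1}E=\bigoplus_i T_x^{-1}E_i$ and $E\otimes L=\bigoplus_i(E_i\otimes L)$ are again decompositions into indecomposables, and by the uniqueness in Lemma \ref{lem:decomposition into indecomposable} any isomorphism between two such sums is induced by a slope-preserving bijection of their summands.

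The engine is an explicit translation formula. Writing $H_{i,\R}\colon N_\R\to M_\R=\Hom(N_\R,\R)$ for the $\R$-linear extension of $H_i$ and combining Lemma \ref{lem:pull-back of translate} with the line-bundle translation formula $T_{\overline y}^{-1}L(H,l)=L(H,l-H(y))$ and with $f_i^*L(0,m)=L(0,m|_{\Lambda_i})$, I obtain, for any lift $\tilde x\in N_\R$ of $x\in\Sigma$,
\[
T_x^{-1}E_i\;\cong\;E_i\otimes\phi_{H_i}(\tilde x),\qquad \phi_H(\tilde x):=\big[-H_\R(\tilde x)\big]\in\Hom(N_\R,\R)/M=\Pic^0(\Sigma) \ .
\]
The crucial point is that the twist $\phi_H(\tilde x)$ depends only on $\tilde x$ and on the slope $H$, that $\phi_H\colon N_\R\to\Pic^0(\Sigma)$ is a homomorphism, and that its image is uncountable exactly when $H\neq0$. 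A parallel application of Lemma \ref{lem:classification of tropical vector bundles} identifies the stabilizer $K_0(E_i)=\{L\in\Pic^0(\Sigma):E_i\otimes L\cong E_i\}$ with the countable subgroup $\{[H_{i,\R}(\lambda)]:\lambda\in\Lambda\}$, which again depends only on $H_i$.

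Both ``if'' directions are then immediate. If $\delta(E_i)=0$ for all $i$, then $\phi_0\equiv0$ and the formula gives $T_x^{-1}E_i\cong E_i$ for every $x$, so $E$ is homogeneous. If instead all slopes coincide, say $\delta(E_i)=H$, then for a fixed $x$ the single class $L:=\phi_H(\tilde x)\in\Pic^0(\Sigma)$ satisfies $T_x^{-1}E_i\cong E_i\otimes L$ simultaneously for all $i$, whence $T_x^{-1}E\cong E\otimes L$ and $E$ is semi-homogeneous.

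For the converses I argue by contradiction. Assume $E$ is semi-homogeneous but $\delta(E_1)=H_1\neq H_2=\delta(E_2)$ for two summands. For each $x$ choose $L_x\in\Pic^0(\Sigma)$ with $T_x^{-1}E\cong E\otimes L_x$. Matching summands via the slope-preserving bijection of the first paragraph and substituting $T_x^{-1}E_i\cong E_i\otimes\phi_{H_i}(\tilde x)$, I find that both $\phi_{H_1}(\tilde x)\otimes L_x^{-1}$ and $\phi_{H_2}(\tilde x)\otimes L_x^{-1}$ lie in one of finitely many cosets of $K_0(E_1)$, respectively $K_0(E_2)$ — namely the finitely many $\Pic^0$-classes $N$ with $E_1\otimes N$, respectively $E_2\otimes N$, isomorphic to a summand of equal slope. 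Their quotient $\phi_{H_1-H_2}(\tilde x)=\phi_{H_1}(\tilde x)\otimes\phi_{H_2}(\tilde x)^{-1}$ therefore lies, for every $\tilde x\in N_\R$, in a single countable subset of $\Pic^0(\Sigma)$; since $H_1\neq H_2$ the homomorphism $\phi_{H_1-H_2}$ has uncountable image, a contradiction. The homogeneous case is the same argument with $L_x$ trivial, which forces each $\phi_{\delta(E_i)}$ to have countable image and hence $\delta(E_i)=0$. I expect the main obstacle to be exactly this bookkeeping: because translation may permute summands of equal slope, one cannot argue one summand at a time, and it is the cardinality gap between the continuous family $\{\phi_H(\tilde x)\}_{\tilde x}$ and the countable stabilizers that ultimately excludes distinct slopes.
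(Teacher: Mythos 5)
Your proof is correct and takes essentially the same route as the paper's: write each indecomposable summand as the pushforward of a line bundle given by a factor of automorphy, apply the translation formula, match summands of $T_x^{-1}E$ and $E\otimes L$ via a slope-preserving permutation (uniqueness of the decomposition into indecomposables), and conclude by the same countable-versus-uncountable cardinality argument over $N_\R$. The only difference is packaging: you phrase the countability through the homomorphisms $\phi_H\colon N_\R\to\Pic^0(\Sigma)$ and cosets of the stabilizers $K_0(E_i)$, while the paper manipulates the corresponding affine equations on factors of automorphy directly.
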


\begin{proof}
For every $i\in\{1,\ldots, k\}$ there exists a finite-index sublattice $\Lambda_i$ of $\Lambda$, a symmetric morphism $H_i\colon \Lambda_i\to M$, and a linear map $l_i\colon \Lambda_i\to \R$ such that 
$E_i =(\pi_i)_*L(H_i, l_i)$, where $\pi_i\colon N_\R/\Lambda_i\to N_\R/\Lambda$ is the quotient map. For $x\in N_\R$, we have 
\[
T_{\overline x}^*E \cong \bigoplus_{i=1}^k  L\big(H_i, l_i-H_i(x)\big)
\]
by Lemma \ref{lem:pull-back of translate}. By Lemma \ref{lem:classification of tropical vector bundles}, this is isomorphic to $E\otimes L(0, l)$ for some $l\colon \Lambda\to \R$ if and only if there exists a permutation $\sigma$ on $\{1,\ldots, k\}$ with $\Lambda_i=\Lambda_{\sigma(i)}$ and $H_i=H_{\sigma(i)}$ as well as for each $1\leq i\leq k$ elements $\lambda_i\in \Lambda$ and $m_i\in M$ with 
\begin{equation}
\label{eq:condition for translate equal to tensor with line bundle}
l_i-H_i(x)= l_{\sigma(i)}+H_i(\lambda_i)+m_i+l    \ .
\end{equation}

If $E$ is homogeneous, we can take $l=0$ and see that there are only countably many choices for $l_i-H_i(x)$ as $i$ varies over $\{1,\ldots, k\}$ and $x$ over $N_\R$. As every one-dimensional subspace of $N_\R$ is uncountable, this implies that $H_i(x)=0$ for all $x\in N_\R$ and $i\in \{1,\ldots, k\}$, and hence $\delta(E_i)= H_i= 0$ for all $i\in \{1,\ldots, k\}$. Conversely, if $H_i=0$ for all $1\leq i\leq k$, then we can take $\sigma=\id$ as well as $l$ and all $\lambda_i$ and $m_i$ to be zero in \eqref{eq:condition for translate equal to tensor with line bundle}, showing that $E$ is homogeneous.

If $E$ is semi-homogeneous, we take differences of the equations \eqref{eq:condition for translate equal to tensor with line bundle} for given  $1\leq i,j\leq n$. For every $x \in N_\R$ there exists a permutation $\sigma$ of $\{1,\ldots, k\}$ and elements $\lambda\in \Lambda$ and $m\in M$ with
\[
l_i-l_j +\big(H_j(x)-H_i(x)\big)= l_{\sigma(i)}-l_{\sigma(j)}+ H(\lambda)+m.
\]
As there is only a countable set of choices for the triple $(\sigma, \lambda, m)$, there exists a set $S\subseteq N_\R$ whose affine  span is $N_\R$ such that $H_j(s)-H_i(s)$ is independent of the choice of $s\in S$. But this implies that 
\[
\delta(E_i)=H_i=H_j=\delta(E_j).
\]
Conversely, if all $H_i$ coincide, then we can take $\sigma=\id$ as well as $l=-H_1(x)$, and all $\lambda_i$ and $m_i$ equal to zero in \eqref{eq:condition for translate equal to tensor with line bundle} and conclude that $E$ is semi-homogeneous.
\end{proof}

\subsection{A parameter space for  semi-homogeneous tropical bundles}

\begin{lemma}
    \label{lem:pull-back is isomorphism}
    Let $f\colon \Sigma'=N_\R/\Lambda_1\to N_\R/\Lambda_2=\Sigma$, where $\Lambda_1$ is a finite-index sublattice of $\Lambda_2$. Then the pull-back morphism
    \[
        f^*\colon \Pic^0(N_\R/\Lambda_2)\longrightarrow \Pic^0(N_\R/\Lambda_1)
    \]
is a bijection.
\end{lemma}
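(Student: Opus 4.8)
The plan is to use the explicit description of $\Pic^0$ of a real torus via factors of automorphy, recalled above, and to observe that the underlying identification does not actually depend on the lattice. Concretely, for any full-rank lattice $\Lambda\subseteq N_\R$ the group $\Pic^0(N_\R/\Lambda)$ is the set of factors of automorphy $(0,l)$ with $l\in\Hom(\Lambda,\R)$ modulo the relation that $L(0,l)\cong L(0,l')$ if and only if $l_\R-l'_\R\in M$. First I would record that, because $\Lambda$ spans $N_\R$, each $l\in\Hom(\Lambda,\R)$ admits a unique $\R$-linear extension $l_\R\in\Hom_\R(N_\R,\R)$, that this extension is injective, and that it carries the relation subgroup $\{l:l_\R\in M\}$ bijectively onto $M$ (injectivity of $m\mapsto m\vert_\Lambda$ again uses that $\Lambda$ has full rank). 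This yields a canonical isomorphism $\Pic^0(N_\R/\Lambda)\xrightarrow{\sim}\Hom_\R(N_\R,\R)/M$, $[l]\mapsto[l_\R]$, whose formation makes no reference to the particular lattice $\Lambda$.

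Next I would identify the map $f^*$ with restriction of factors of automorphy. Since $\Lambda_1\subseteq\Lambda_2$, the cover $f$ is the quotient by the finite group $\Lambda_2/\Lambda_1$ and corresponds on fundamental groups to the inclusion $\Lambda_1\hookrightarrow\Lambda_2$; pulling a line bundle $L(0,l)$ on $\Sigma$ back along $f$ therefore just restricts its factor of automorphy, giving $f^*L(0,l)=L(0,l\vert_{\Lambda_1})$. This is the $H=0$ instance of the mechanism underlying Lemma \ref{lem:classification of tropical vector bundles}.

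Finally I would combine the two steps. Under the identifications of the first paragraph, a class $[l]\in\Pic^0(\Sigma)$ corresponds to $[l_\R]$, and its image $f^*[l]=[l\vert_{\Lambda_1}]\in\Pic^0(\Sigma')$ corresponds to $[(l\vert_{\Lambda_1})_\R]$; but the $\R$-linear extension of $l\vert_{\Lambda_1}$ is again $l_\R$. Hence $f^*$ becomes the identity on $\Hom_\R(N_\R,\R)/M$ and is, in particular, a bijection. The only points needing care—and thus the main (though mild) obstacle—are verifying that the $\R$-linear extension sends the relation subgroup exactly onto $M$ and that the pull-back is literally restriction of factors of automorphy; both follow from $\Lambda_1$ and $\Lambda_2$ being full-rank lattices, so no genuine difficulty arises.
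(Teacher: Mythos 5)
Your proof is correct and takes essentially the same route as the paper: both identify $\Pic^0(N_\R/\Lambda_i)$ with $\Hom(\Lambda_i,\R)/M$ via factors of automorphy, identify $f^*$ with restriction along $\Lambda_1\hookrightarrow\Lambda_2$, and conclude from the fact that $\Hom(\Lambda_2,\R)\to\Hom(\Lambda_1,\R)$ is a bijection fixing $M$. Your write-up merely spells out in more detail the full-rank arguments that the paper leaves implicit.
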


We reiterate that $f^\ast$ is only a bijective homomorphism; it does not induce an isomorphism of the integral affine structures on the dual tori. 

\begin{proof}[Proof of \ref{lem:pull-back is isomorphism}]
    We have $\Pic^0(N_\R/\Lambda_i)= \Hom(\Lambda_i,\R)/M$ and the pullback is induced by the inclusion $\Lambda_1\to \Lambda_2$. Since its dual $\Hom(\Lambda_2,\R)\to \Hom(\Lambda_1,\R)$ is a bijection that is the identity on $M$, the assertion follows.
\end{proof}

\begin{lemma}
    \label{lem:isomorphisms of different push-forwards}
    Let $f\colon \Sigma'\to \Sigma$ and $g\colon \Sigma''\to \Sigma'$ be connected finite covers of tropical abelian varieties and let $L$ and $M$ be two tropical line bundles on $\Sigma'$. Then $f_*L\cong f_*M$ if and only if $(f\circ g)_* g^*L \cong (f\circ g)_* g^*M$.
\end{lemma}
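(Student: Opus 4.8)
\subsection*{Proof plan}

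The plan is to reduce the statement to the explicit isomorphism criterion for pushforwards of line bundles along sublattice covers recorded in Lemma \ref{lem:classification of tropical vector bundles}. First observe that the composite $f\circ g$ and the pullback $g^*L$ only make sense when $g$ has target $\Sigma'$, so I read the statement with $g\colon\Sigma''\to\Sigma'$. By Lemma \ref{lem:covers of tropical Abelian variety} I may assume $\Sigma=N_\R/\Lambda$, $\Sigma'=N_\R/\Lambda'$, and $\Sigma''=N_\R/\Lambda''$ for finite-index inclusions $\Lambda''\subseteq\Lambda'\subseteq\Lambda$, with $f$, $g$, and $f\circ g$ the associated quotient maps. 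I then write $L=L(H_L,l_L)$ and $M=L(H_M,l_M)$ as factors of automorphy on $\Sigma'$, with $H_L,H_M\colon\Lambda'\to M$ the $\R$-symmetric parts and $l_L,l_M\in\Hom(\Lambda',\R)$.

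The two inputs I would use are: (i) pulling a line bundle back along a sublattice cover simply restricts its factor of automorphy, just as for N\'eron--Severi classes in Remark \ref{identif}, so that $g^*L=L(H_L\vert_{\Lambda''},l_L\vert_{\Lambda''})$ and likewise for $M$; and (ii) the criterion of Lemma \ref{lem:classification of tropical vector bundles} applies verbatim both to $f$ (the inclusion $\Lambda'\subseteq\Lambda$) and to $f\circ g$ (the inclusion $\Lambda''\subseteq\Lambda$), in each case with the auxiliary element $\lambda$ ranging over the \emph{same} bottom lattice $\Lambda$. Applying the criterion yields: $f_*L\cong f_*M$ if and only if $H_L=H_M$ and $l_L-l_M-H_L(\lambda)\in M$ for some $\lambda\in\Lambda$; whereas $(f\circ g)_*g^*L\cong (f\circ g)_*g^*M$ if and only if $H_L\vert_{\Lambda''}=H_M\vert_{\Lambda''}$ and $l_L\vert_{\Lambda''}-l_M\vert_{\Lambda''}-H_L\vert_{\Lambda''}(\lambda)\in M$ for some $\lambda\in\Lambda$.

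It then remains to check that these two conditions coincide, which is where the only actual content sits. For the N\'eron--Severi parts, $H_L=H_M$ trivially restricts to $H_L\vert_{\Lambda''}=H_M\vert_{\Lambda''}$; conversely, since $\Lambda''$ has finite index in $\Lambda'$ and $M$ is torsion-free, every $\gamma\in\Lambda'$ has a multiple $n\gamma\in\Lambda''$, so $n\bigl(H_L(\gamma)-H_M(\gamma)\bigr)=0$ forces $H_L(\gamma)=H_M(\gamma)$. For the translation parts, the point is that $l_L$ and its restriction $l_L\vert_{\Lambda''}$ extend to one and the same $\R$-linear functional on $N_\R$ (both lattices being of full rank), and similarly $H_L\vert_{\Lambda''}(\lambda)$ and $H_L(\lambda)$ agree in $M_\R$; hence the condition ``$l_L-l_M-H_L(\lambda)\in M$'' reads identically whether tested on $\Lambda'$ or on $\Lambda''$, with $\lambda$ running over the common lattice $\Lambda$ in both. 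The two criteria therefore become the same pair of conditions, giving the desired equivalence.

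I expect the main obstacle to be purely organizational: tracking the three nested lattices together with the identifications $\Hom(\Lambda',\R)\cong\Hom(N_\R,\R)\cong\Hom(\Lambda'',\R)$, under which the factor-of-automorphy data is unchanged by restriction, and carrying out the torsion-freeness argument that upgrades agreement of $H_L$ and $H_M$ on $\Lambda''$ to agreement on $\Lambda'$. No genuinely hard step arises once the restriction formula for $g^*$ and the classification criterion are in place. As a cross-check one could instead argue functorially via transitivity of pushforward, $(f\circ g)_*g^*=f_*(g_*g^*)$, but the factor-of-automorphy computation above is the most direct route.
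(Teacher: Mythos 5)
Your proof is correct, and you rightly resolve the typo in the statement: $g$ must be a cover $\Sigma''\to\Sigma'$ for $f\circ g$ and $g^*L$ to be defined, which is also how the paper's own proof implicitly reads it. Your route, however, is genuinely different from the paper's. You make everything explicit: writing $L$ and $M$ as factors of automorphy, using that pullback along a sublattice cover restricts the factor of automorphy, and applying the criterion of Lemma \ref{lem:classification of tropical vector bundles} to both covers $\Lambda'\subseteq\Lambda$ and $\Lambda''\subseteq\Lambda$, you check that the two resulting conditions coincide (restriction to a full-rank sublattice does not change $\R$-linear extensions, and the N\'eron--Severi parts agree by torsion-freeness). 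The paper argues abstractly instead: isomorphism of pushforwards means the line bundles differ by a deck transformation; deck transformations of $f$ and of $f\circ g$ are both induced by elements $\lambda\in\pi_1(\Sigma,0)=\Lambda$ and satisfy $g\circ\phi_{f\circ g,\lambda}=\phi_{f,\lambda}\circ g$; and then injectivity of pullback (Lemma \ref{lem:pull-back is isomorphism}) transfers the condition from $\Sigma''$ down to $\Sigma'$. The paper's argument is shorter, avoids coordinates, and needs no explicit pullback formula; yours is more computational but equally valid, provided you justify the one input you assert rather than prove --- that $g^*L(H,l)=L\big(H\vert_{\Lambda''},l\vert_{\Lambda''}\big)$ --- which follows immediately from the descent construction of $L(H,l)$ from $N_\R$ (the paper records the analogous statement only for N\'eron--Severi classes, in Remark \ref{identif}). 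Both proofs ultimately rest on the same principle coming from Proposition \ref{prop_vectorbundle=cover+linebundle}: pushforwards along connected covers are isomorphic exactly when the line bundles are related by an automorphism of the cover.
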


\begin{proof}
We have $(f\circ g)_*g^*L\cong (f\circ g)_*g^*M$ if and only if there exists an automorphism $\phi$ of $f\circ g$ such that $\phi^*g^* L\cong g^* M$. Similarly, we have $f_*L\cong f_*M$ if and only if there exists an automorphim $\psi$ of $f$ such that $\psi^*L\cong M$. Any automorphism on a  connected cover $h$ of $\Sigma$ is induced by an element $\lambda\in \pi_1(\Sigma,0)$; we denote the automorphism induced by $\lambda$ by  $\phi_{h,\lambda}$. So we have $(f\circ g)_*g^*L\cong (f\circ g)_*g^*M$ if and only if there exists $\lambda\in \pi_1(A,0)$ with $\phi_{f\circ g,\lambda}^*g^L\cong g^*M$. Since $g\circ \phi_{f\circ g,\lambda}= \phi_{f,\lambda}\circ g$, this is equivalent to the existence of $\lambda$ with $\phi_{f,\lambda}^*L\cong M$ by Lemma \ref{lem:pull-back is isomorphism}. This, in turn, is equivalent to $f_*L\cong f_*M$.
\end{proof}

\begin{definition} \label{def:equivalence}
     Let $\Sigma=N_\R/\Lambda$  and $\Gamma$ be a finite-index sublattice of $\Lambda$.
      \begin{enumerate}[(i)]
    \item  An indecomposable tropical vector bundle $E= E(f\colon \Sigma'\to \Sigma,L)$  is called \emph{$\Gamma$-compatible} if the cover $N_\R / \Gamma \to \Sigma$ factors as $f \circ g$ for some $g: N_\R / \Gamma \to \Sigma'$.
  \item
    We say that two indecomposable tropical vector bundles $E_1= E(f_1\colon \Sigma'_1\to \Sigma,L_1)$ and $E_2=E(f_2\colon \Sigma'_2\to \Sigma,L_2)$ on the tropical abelian variety $\Sigma=N_\R/\Lambda$ are \emph{equivalent}, written $E_1\sim E_2$, if there exist a connected free cover $h\colon \Sigma''\to \Sigma$ and factorizations $h=f_1\circ g_1= f_2\circ g_2$ such that $h_*g_1^*L_1\cong h_*g_2^*L_2$.
    \item We denote by  $M^{\Gamma}_{H,1}(\Sigma)$ the set of equivalence classes of indecomposable $\Gamma$-compatible tropical vector bundles of slope $H$.
    \end{enumerate}
\end{definition}

\begin{remark}\label{rem:equivalence of vector bundles}
    In Definition \ref{def:equivalence} (ii), there exists a connected finite cover $h\colon \Sigma''\to \Sigma$ with the desired properties if and only if all connected finite covers that dominate both $f_1$ and $f_2$ have the desired property. This follows directly from Lemma \ref{lem:isomorphisms of different push-forwards} and the fact that connected finite covers form a directed set. 
\end{remark}

Since by Lemma \ref{lem:decomposition into indecomposable}, any tropical vector bundle on an integral affine manifold can be uniquely expressed as a sum of indecomposable ones, we can extend the Definition \ref{def:equivalence} to all tropical vector bundles.

\begin{definition} \label{def:equivalence of vector bundles}
Let $\Sigma=N_\R/\Lambda$ and $\Gamma$ be a finite-index sublattice of $\Lambda$. 
\begin{enumerate}[(i)]
\item We say that a tropical vector bundle $E$ on $\Sigma$ is \emph{$\Gamma$-compatible} if each of its indecomposable summands is $\Gamma$-compatible.
\item We write $M^{\Gamma}_{H,k}(\Sigma)$ for the set theoretic symmetric power $\Sym^k\big(M_{H,1}^\Gamma(\Sigma)\big)=\left(M_{H,1}^\Gamma(\Sigma)\right)^k/S_k$. We may interpret an element in $M^{\Gamma}_{H,k}(\Sigma)$ as an equivalence class of vector bundles arising as a direct sum $E=E_1\oplus \cdots\oplus E_k$ of indecomposable $\Gamma$-compatible semi-homogeneous vector bundles of slope $H$, where two bundles $E=E_1\oplus \cdots\oplus E_k$ and $E'=E_1'\oplus \cdots \oplus E_k'$ are equivalent if and only if (possibly after a suitable permutation) the $E_i$ and $E_i'$ are equivalent.
\end{enumerate}
\end{definition}

\begin{remark}
     Note that in Definition \ref{def:equivalence of vector bundles}, it is possible that two tropical vector bundles of different rank are equivalent.
\end{remark}

\begin{proposition}
\label{prop:Moduli of Gamma-compatible tropical vector bundles}
    The subgroup $M'=M+H(\Lambda)$ of $\Hom(\Gamma,\R)$, where we consider $H$ as a map from $\Lambda$ to $\Hom_\R(N_\R,\R)\cong \Hom(\Gamma,\R)$, contains $M$ as a finite-index lattice. The set $\Hom(\Gamma, \R)/M'$ can be naturally identified with the set $M^{\Gamma}_{H,1}(\Sigma)$.
\end{proposition}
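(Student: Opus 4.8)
The plan is to prove the two assertions in turn: first that $M'$ is a finite-index overlattice of $M$ inside $\Hom(\Gamma,\R)$, and then to produce an explicit bijection $\Hom(\Gamma,\R)/M' \to M^{\Gamma}_{H,1}(\Sigma)$. For the lattice claim, I would use that $H\in\NS(\Sigma)_\Q$ admits a positive integer $l$ with $lH\in\NS(\Sigma)$ (as in Lemma \ref{lem:Gamma contains lLambda}), so that $lH(\Lambda)\subseteq M$ and hence $H(\Lambda)\subseteq \tfrac1l M$. Under the identification $\Hom(\Gamma,\R)\cong\Hom_\R(N_\R,\R)=M_\R$ this gives the inclusions $M\subseteq M'=M+H(\Lambda)\subseteq \tfrac1l M$. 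Since $[\tfrac1l M:M]=l^{\rank M}$ is finite, $M'$ is sandwiched between two full lattices of the same rank, hence is itself a lattice containing $M$ with finite index.

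For the bijection, I would first record the shape of the objects being parametrized. If $E=E(f\colon N_\R/\Lambda'\to\Sigma,L)$ is an indecomposable $\Gamma$-compatible bundle of slope $H$, then $\Gamma\subseteq\Lambda'$, and the slope condition forces $[L]=H\vert_{\Lambda'}$ to be an integral class, i.e.\ $\Lambda'\subseteq\LargeLattice{H}$; in particular $\Gamma\subseteq\LargeLattice{H}$, which is assumed throughout (otherwise $M^{\Gamma}_{H,1}(\Sigma)$ is empty and there is nothing to identify). Writing $L=L(H\vert_{\Lambda'},l')$ via tropical Appell--Humbert and letting $g\colon N_\R/\Gamma\to N_\R/\Lambda'$ be the intermediate cover, one has $g^*L=L(H\vert_\Gamma,l'\vert_\Gamma)$. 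By Definition \ref{def:equivalence} together with Remark \ref{rem:equivalence of vector bundles}, pushing everything down to the common cover $h\colon N_\R/\Gamma\to\Sigma$ shows $E\sim h_*L(H\vert_\Gamma,l'\vert_\Gamma)$. Consequently the assignment
\[
\Phi\colon \Hom(\Gamma,\R)\longrightarrow M^{\Gamma}_{H,1}(\Sigma),\qquad l\longmapsto\big[h_*L(H\vert_\Gamma,l)\big]
\]
is well defined (each $h_*L(H\vert_\Gamma,l)$ is indecomposable, $\Gamma$-compatible, and of slope $H$) and surjective.

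It then remains to identify the fibers of $\Phi$. Because both $h_*L(H\vert_\Gamma,l)$ and $h_*L(H\vert_\Gamma,l')$ are already pushed forward from $N_\R/\Gamma$, Remark \ref{rem:equivalence of vector bundles} permits testing their equivalence on the cover $h$ itself, so that $\Phi(l)=\Phi(l')$ if and only if $h_*L(H\vert_\Gamma,l)\cong h_*L(H\vert_\Gamma,l')$. By Lemma \ref{lem:classification of tropical vector bundles} the latter holds precisely when $l-l'-H(\lambda)\vert_\Gamma\in M$ for some $\lambda\in\Lambda$, i.e.\ exactly when $l-l'\in M+H(\Lambda)=M'$. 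Hence $\Phi$ descends to a bijection $\Hom(\Gamma,\R)/M'\xrightarrow{\sim}M^{\Gamma}_{H,1}(\Sigma)$.

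The main point requiring care is recognizing that the two independent sources of ambiguity together account for exactly $M'$: the deck transformations of $h\colon N_\R/\Gamma\to\Sigma$ are the translations by classes of $\lambda\in\Lambda$, and by the translation formula $T_{\overline\lambda}^{-1}L(H,l)=L(H,l-H(\lambda))$ they shift the linear part $l$ by $H(\lambda)\vert_\Gamma$, contributing the summand $H(\Lambda)$, while the remaining ambiguity $M$ comes from the identification of isomorphic line bundles in $\Pic^0$. The one place that demands attention is keeping the identification $\Hom(\Gamma,\R)\cong M_\R$ consistent, in particular verifying that the $\R$-linear extension of $H\vert_\Gamma$ agrees with $H$ on all of $\Lambda$, so that the symbol $H(\lambda)\vert_\Gamma$ appearing in Lemma \ref{lem:classification of tropical vector bundles} really is the image of $H(\lambda)\in H(\Lambda)$ in $\Hom(\Gamma,\R)$.
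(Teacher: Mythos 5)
Your proof is correct and follows essentially the same route as the paper: both arguments reduce, via Lemma~\ref{lem:isomorphisms of different push-forwards} and Remark~\ref{rem:equivalence of vector bundles}, to push-forwards of line bundles of class $H$ along $N_\R/\Gamma\to\Sigma$, and both then determine when two such push-forwards coincide --- you by invoking Lemma~\ref{lem:classification of tropical vector bundles}, the paper by computing the deck-group action $(H,l)\mapsto\big(H,l-H(\lambda)\big)$ directly, which is exactly the computation underlying that lemma. A minor bonus on your side: you explicitly verify the finite-index assertion via $M\subseteq M'\subseteq \tfrac{1}{l}M$ and flag the implicit hypothesis $H(\Gamma)\subseteq M$, both of which the paper's proof leaves unaddressed.
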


In particular, the moduli space $M_{H,1}^{\Gamma}(\Sigma)$ carries the structure of a real torus with integral structure. 
Thus the moduli space $M_{H,k}^{\Gamma}(\Sigma)=\Sym^k\big(M_{H,1}^{\Gamma}(\Sigma)\big)$ is a finite quotient of an integral affine manifold.

\begin{proof}[Proof of Proposition \ref{prop:Moduli of Gamma-compatible tropical vector bundles}]

    All underlying covers of indecomposable $\Gamma$-compatible tropical vector bundles can be dominated by the cover $\pi\colon N_\R/\Gamma\to N_\R/\Lambda$, so by Lemma \ref{lem:isomorphisms of different push-forwards} (see also Remark \ref{rem:equivalence of vector bundles}) the set of equivalence classes of $\Gamma$-compatible indecomposable tropical vector bundles on $N_\R/\Lambda$ of slope $H$ is in natural bijection with the set of isomorphism classes of tropical vector bundles that appear as push-forwards of tropical line bundles of Néron-Severi class $H$ on $N_\R/\Gamma$ along $\pi$. This, in turn, is in bijection with the dual abelian variety of $N_\R/\Gamma$ modulo the automorphism group $\Lambda/\Gamma$ of the cover, that is with
    \[
    \big(\Hom(\Gamma,\R)/M\big)\big/(\Lambda/\Gamma) \ .
    \]
    A class $\overline \lambda\in \Lambda/\Gamma$ acts on $N_\R/\Gamma$ as translation by $\lambda$. On the level of factors of automorphy we  have described the action of $\lambda$ on tropical line bundles of Néron-Severi class $H$ by 
    \[
    (H,l)\longmapsto \big(H, l-H(\lambda)\big) \ .
    \]
    This allows us to identify the quotient $\big(\Hom(\Gamma,\R)/M\big)\big/(\Lambda/\Gamma)$ with $\Hom(\Gamma, \R)/M'$.
\end{proof}

%%%%%%%%%%%%%%%%%%%%%%%%%%%%%%%%%%%%%%%%%%%%%%%%%%%%

\section{Uniformization and semi-homogeneous bundles}\label{section_uniform+semihom}

%\inder{notations introduced: $\chi_H$, $\psi_H$, $A_H=\big(A^\vee/\Sigma(H)\big)^\vee$, $H^\vee$, $\NLargeLattice{H}$, $\MLargeLattice{H}$, $\Pic^H(\NLargeLattice{H}\otimes_\Z\G_m^\an/\SmallLattice{H})$ } \\
%\inder{definitions introduced:$H$-admissible}\\

Let $A^{\an}=T^{\an}/\Lambda$ be the analytification of an abelian variety $A$ with totally degenerate reduction over an algebraically closed complete non-Archimedean field $K$. Let $M$ and $N$ denote the character and cocharacter lattice of $T$, so that  $T=N\otimes_\Z\G_m$. As recalled in \S~\ref{subsec:results on semi-homogeneous bundles}, in \cite{Mukai}, Mukai showed that  for every $H\in \NS(A)_\Q$ there exists a simple semi-homogeneous vector bundle $E$ with $\delta(E)=H$, and $E'$ is another such vector bundle if and only if $E'\cong E\otimes L$ for some $L\in \Pic^0(A)$. He also proved that there exists an isogeny $f\colon B\to A$ and a line bundle $L$ on $B$ such that $f_*L\cong E$. However, neither the isogeny nor the line bundle are unique in general. Moreover, to be able to tropicalize $E$ we need $f$ to be a \emph{free cover}, by which we mean that the associated tropical cover $f^\trop\colon B^\trop\to A^\trop$ is free. In what follows we show that every simple semi-homogeneous bundle can indeed be written as the push-forward of a line bundle along a free cover.

\subsection{Semi-homogeneous vector bundles and free covers}

Using the given uniformization of $A$, we observe that the free covers of $A$ are precisely those given by a quotient map $N\otimes_\Z\G_m^\an/\Lambda'\to  N\otimes_\Z\G_m^\an/\Lambda=A$ for some finite-index sublattice $\Lambda'$ of $\Lambda$. To this end, we make the following definition:

\begin{definition}
    Let $H\in \NS(A)_\Q$. A finite-index sublattice $\Lambda'$ of $\Lambda$ is \emph{$H$-admissible} if there exists a line bundle $L$ on $T^\an/\Lambda'$ whose push-forward to $A$ under the quotient map is simple of slope $H$. We say that $L$ \emph{represents} $H$.
\end{definition}

The following proposition shows that if a line bundle $L$ on $T^\an/\Lambda'$ represents $H$, then its Néron-Severi class $[L]$ is given by $H$ again. 

\begin{proposition}
\label{prop:K(L) for line bundle on cover}
Let $A^{\an}=T^{\an}/\Lambda$ and  $H\in \NS(A)_\Q$. Let $f\colon T^\an/\Lambda'\to A^\an$ be an isogeny, where $\Lambda'$ is a finite-index sublattice of $\Lambda$, and let $L$ be a line bundle on $T^\an/\Lambda'$ such that $\delta(f_*L)=H$. Then $\Lambda'$ is contained in the subgroup $\LargeLattice{H}$ defined in Definition \ref{definition: Large lattice {H}}. Moreover, we have 
\[
\ker(f)\cap K(L)= \big\{\lambda\in \LargeLattice{H} \ \big\vert\  [\lambda, \lambda']_H =[\lambda', \lambda]_H \text{ for all }\lambda'\in \Lambda'\big\}/\Lambda' \ .
\]
In particular, the sublattice $\Lambda'$ is $H$-admissible if and only if it is maximal among the sublattices of $\LargeLattice{H}$ on which $H$ is symmetric.
\end{proposition}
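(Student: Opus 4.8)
The plan is to translate the whole statement into the language of factors of automorphy on $T^\an/\Lambda'$ together with the pairing $[-,-]_H$, and to treat the three assertions in turn: the inclusion $\Lambda'\subseteq \LargeLattice{H}$, the computation of $\ker(f)\cap K(L)$, and the characterisation of $H$-admissibility. Throughout, $\ker(f)=\Lambda/\Lambda'$, and I will write $S(\Lambda')=\{\lambda\in\LargeLattice{H}\ \vert\ [\lambda,\mu]_H=[\mu,\lambda]_H\text{ for all }\mu\in\Lambda'\}$ for the set appearing on the right-hand side.

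For the first assertion I would invoke Lemma \ref{lem:slope of push-forward}: from $\delta(f_*L)=H$ it gives $f^*H=[L]$ in $\NS(T^\an/\Lambda')_\Q$. The class $[L]$ is a genuine (integral, $\G_m$-symmetric) N\'eron--Severi class $H'\colon\Lambda'\to M$, whereas $f^*H$ is the restriction $H\vert_{\Lambda'}$. Comparing the two as maps $\Lambda'\to M_\Q$ shows $H(\lambda')=H'(\lambda')\in M$ for every $\lambda'\in\Lambda'$, i.e. $\Lambda'\subseteq\LargeLattice{H}$; it also records that $H\vert_{\Lambda'}$ is $\G_m$-symmetric, so $H$ is symmetric on $\Lambda'$ and the quotient $S(\Lambda')/\Lambda'$ makes sense.

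For the second assertion, a class $\overline{\lambda}\in\ker(f)$ lies in $K(L)$ iff $T_{\overline{\lambda}}^*L\cong L$. Applying the translation formula for factors of automorphy on $T^\an/\Lambda'$ gives $T_{\overline{\lambda}}^*L=L_{(H',r'')}$ with $r''(\mu)/r'(\mu)=[\lambda,\mu]_H$, and then the isomorphism criterion rewrites membership as: there is $m\in M$ with $[\lambda,\mu]_H=\langle\mu,m\rangle$ for all $\mu\in\Lambda'$. The crux — and the step I expect to be the main obstacle — is to extract \emph{integrality} from this purely multiplicative identity, since $H$ is only $\Q$-valued and need not pair $\Lambda$ into $\G_m$. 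I would pass to valuations: because $\nu[\lambda,\mu]_H=[\lambda,\mu]_H^\R$ and $H$ is $\R$-symmetric (being in $\NS(A)_\Q\subseteq\NS(A^\trop)_\Q$), the displayed identity yields $\langle\mu,H(\lambda)\rangle_\R=\langle\mu,m\rangle_\R$ for all $\mu\in\Lambda'$. As $\Lambda'$ has finite index it spans $N_\R$, and the duality pairing is nondegenerate, forcing $H(\lambda)=m\in M$; hence $\lambda\in\LargeLattice{H}$ and the condition becomes exactly the symmetry $[\lambda,\mu]_H=[\mu,\lambda]_H$ for all $\mu\in\Lambda'$. The converse (if $\lambda\in\LargeLattice{H}$ and symmetry holds, take $m=H(\lambda)$) is immediate, giving $\ker(f)\cap K(L)=S(\Lambda')/\Lambda'$.

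Finally, for the third assertion I would feed this into the Oda--Mukai simplicity criterion (\cite[Theorem 1.2]{Oda}, \cite[Lemma 3.13]{Mukai}), by which $f_*L$ is simple iff $\ker(f)\cap K(L)=0$, i.e. iff $S(\Lambda')=\Lambda'$. It then remains to prove the lattice-theoretic equivalence $S(\Lambda')=\Lambda'\iff\Lambda'$ is maximal among sublattices of $\LargeLattice{H}$ on which $H$ is symmetric. One direction is formal: if $S(\Lambda')=\Lambda'$ and $\Lambda''\supseteq\Lambda'$ is symmetric, then every $\lambda\in\Lambda''$ pairs symmetrically against $\Lambda'$, so $\Lambda''\subseteq S(\Lambda')=\Lambda'$. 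For the other direction, given $\lambda_0\in S(\Lambda')$ a short bilinearity computation shows $H$ stays symmetric on $\Lambda'+\Z\lambda_0\subseteq\LargeLattice{H}$ (using symmetry on $\Lambda'$ together with $[\lambda_0,\mu]_H=[\mu,\lambda_0]_H$), so maximality forces $\lambda_0\in\Lambda'$. To deduce $H$-admissibility from maximality I would produce a representing $L$: since $H\vert_{\Lambda'}$ is integral and $\G_m$-symmetric it underlies a factor of automorphy, yielding $L$ with $\delta(f_*L)=H$ and, by the second assertion, $\ker(f)\cap K(L)=S(\Lambda')/\Lambda'=0$; conversely any representing $L$ forces $S(\Lambda')=\Lambda'$ by the same identification, closing the equivalence.
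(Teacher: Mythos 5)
Your proposal is correct and follows essentially the same route as the paper: it translates membership in $\ker(f)\cap K(L)$ into the factor-of-automorphy identity $[\lambda,\mu]_H=\langle\mu,m\rangle$ for all $\mu\in\Lambda'$, extracts integrality of $H(\lambda)$ by passing to valuations (the paper clears denominators with a large integer $k$ and uses $\G_m$-symmetry of $kH$, which amounts to the same computation as your use of $\R$-symmetry of $H$), and concludes with Mukai's simplicity criterion $\ker(f)\cap K(L)=0$. The only differences are that you spell out steps the paper leaves implicit — the inclusion $\Lambda'\subseteq\LargeLattice{H}$ via Lemma \ref{lem:slope of push-forward}, the lattice-theoretic equivalence between $S(\Lambda')=\Lambda'$ and maximality, and the construction of a representing line bundle in the direction ``maximal $\Rightarrow$ $H$-admissible'' — all of which are correct and welcome additions.
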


\begin{proof}
\begin{equation}
\label{eq:translate isomorphic to line bundle in terms of factors}
\langle \lambda' , m\rangle =\big\langle x, H(\lambda')\big\rangle
\end{equation}
for all $\lambda'\in \Lambda'$.
The kernel of $f$ is given by $\Lambda/\Lambda'$. So if $\lambda\in \LargeLattice{H}$ with $[\lambda,\lambda']_H=[\lambda',\lambda]_H$ for all $\lambda'\in \Lambda$, then we can take $x=\lambda $ and $m=H(\lambda)$ in \eqref{eq:translate isomorphic to line bundle in terms of factors} and see that the class $\overline \lambda$ in $T^\an/\Lambda'$ satisfies $\overline \lambda\in \ker(f)\cap K(L)$.

Conversely, let $\lambda \in \Lambda$ with $\overline \lambda\in \ker(f)\cap K(L)$. Then by virtue of \eqref{eq:translate isomorphic to line bundle in terms of factors} there exists $m\in M$ such that 
\[
\langle \lambda' , m\rangle =\big\langle \lambda, H(\lambda')\big\rangle 
\]
for all $\lambda'\in \Lambda'$. Because $H\in \NS(A)_\Q$, for a sufficiently large integer $k$ we have 
\[
\langle \lambda', km\rangle =  \big\langle \lambda, kH(\lambda')\big\rangle  
=\big\langle \lambda', kH(\lambda)\big\rangle 
\]
for all $\lambda'\in \Lambda'$. Applying the valuation on both sides and using that $\R$ is torsion-free allows us to conclude that $H(\lambda)= m\in M$, from which we conclude that $\lambda \in \LargeLattice{H}$.
Plugging in $H(\lambda)$ for $m$ above, we see that
\[
[\lambda',\lambda]_H=\big\langle \lambda', H(\lambda)\big\rangle =\big\langle \lambda, H(\lambda')\big\rangle =[\lambda,\lambda']_H
\]
for all $\lambda'\in \Lambda'$.

By our description of $\ker(f)\cap K(L)$,  we have $\ker(f)\cap K(L)=0$ if and only if there exists no sublattice of $\LargeLattice{H}$ that is strictly larger than $\Lambda'$ and on which $H$ defines a symmetric bilinear form. On the other hand, by \cite[Proposition 5.6]{Mukai}, the vector bundle $f_*L$ is simple if and only if $\ker(f)\cap K(L)=0$, completing the proof.
\end{proof}

Recall the definition of the subgroup $\SmallLattice{H}$ given in Definition $\ref{smalllattice{H}}$.

 \begin{proposition}
 \label{prop:writing vector bundles as push-forwards}
     Let $H\in \NS(A)_\Q$. 
     Then there exists an $H$-admissible sublattice $\Lambda'$ of $\Lambda$. Moreover, given $\theta\in \LargeLattice{H}\setminus \SmallLattice{H}$, we may choose $\Lambda'$ such that $\theta\notin \Lambda'$.
 \end{proposition}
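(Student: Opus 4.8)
The plan is to reformulate $H$-admissibility in purely group-theoretic terms via Proposition \ref{prop:K(L) for line bundle on cover} and then solve the resulting problem about an alternating form on a lattice. By that proposition, a finite-index sublattice $\Lambda'\subseteq \LargeLattice{H}$ is $H$-admissible precisely when it is maximal among the sublattices of $\LargeLattice{H}$ on which $H$ is symmetric. To package this symmetry I would introduce the \emph{defect pairing}
\[
e\colon \LargeLattice{H}\times \LargeLattice{H}\longrightarrow \G_m,\qquad e(\lambda,\lambda')=\frac{[\lambda,\lambda']_H}{[\lambda',\lambda]_H} \ ,
\]
which is $\Z$-bilinear and alternating, i.e.\ $e(\lambda,\lambda)=1$ and $e(\lambda',\lambda)=e(\lambda,\lambda')^{-1}$. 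A sublattice is one on which $H$ is symmetric exactly when it is \emph{isotropic} for $e$, and by its very definition $\SmallLattice{H}$ is the radical of $e$. Thus the whole statement is governed by $e$: one must produce a maximal isotropic sublattice, and, given $\theta\notin\SmallLattice{H}$, one avoiding $\theta$.

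The structural observation I would exploit is that $\SmallLattice{H}$ has finite index in $\Lambda$ (Lemma \ref{lem:Gamma contains lLambda}), so that the quotient $\overline{G}\coloneqq \LargeLattice{H}/\SmallLattice{H}$ is a \emph{finite} abelian group on which $e$ descends to a \emph{non-degenerate} alternating pairing $\overline{e}$. First I would record the compatibility of the two pictures: every maximal isotropic sublattice $\Lambda'$ of $\LargeLattice{H}$ contains $\SmallLattice{H}$, since $\Lambda'+\SmallLattice{H}$ is again isotropic ($\SmallLattice{H}$ being the radical), so maximality forces $\SmallLattice{H}\subseteq\Lambda'$; in particular such $\Lambda'$ has finite index in $\Lambda$. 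Passing to the quotient then yields a bijection between maximal isotropic sublattices of $\LargeLattice{H}$ and maximal isotropic subgroups (``Lagrangians'') of $(\overline{G},\overline{e})$, with isotropy preserved in both directions.

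On the finite group $\overline{G}$ the problem becomes elementary. For existence it suffices to extend the trivial subgroup to a maximal isotropic subgroup, which is possible by finiteness. For the refined statement, since $\theta\in\LargeLattice{H}\setminus\SmallLattice{H}$, the definition of $\SmallLattice{H}$ furnishes $\mu\in\LargeLattice{H}$ with $e(\theta,\mu)\neq 1$. The cyclic subgroup $\langle\overline{\mu}\rangle$ is automatically isotropic, as an alternating form vanishes on any cyclic subgroup, so I would extend it to a maximal isotropic subgroup $\overline{I}\ni\overline{\mu}$. Then $\overline{\theta}\notin\overline{I}$: otherwise $\overline{\theta}$ and $\overline{\mu}$ would both lie in the isotropic $\overline{I}$, forcing $e(\theta,\mu)=1$, a contradiction. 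Taking $\Lambda'$ to be the preimage of $\overline{I}$ in $\LargeLattice{H}$ gives a finite-index, maximal isotropic sublattice with $\theta\notin\Lambda'$, and Proposition \ref{prop:K(L) for line bundle on cover} upgrades ``maximal isotropic'' to ``$H$-admissible''.

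The one step I would treat most carefully—and the main obstacle—is the transfer of maximality between the two descriptions: one must check that the preimage of a Lagrangian of $\overline{G}$ is maximal isotropic in $\LargeLattice{H}$, not merely maximal among sublattices containing $\SmallLattice{H}$. This rests entirely on the earlier observation that every isotropic sublattice absorbs the radical $\SmallLattice{H}$, so that any strictly larger isotropic sublattice would descend to an isotropic subgroup strictly containing $\overline{I}$. This absorption is precisely what makes the reduction to the finite symplectic group $\overline{G}$ lossless; everything else is a routine translation through the non-degenerate alternating form.
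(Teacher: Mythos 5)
Your proof is correct and is essentially the paper's own argument: both reduce $H$-admissibility to maximality among sublattices on which $H$ is symmetric via Proposition \ref{prop:K(L) for line bundle on cover}, pick an element pairing non-symmetrically with $\theta$ (your $\mu$, the paper's $\delta$), observe that adjoining it to the radical $\SmallLattice{H}$ stays isotropic, extend to a maximal isotropic sublattice using that $\SmallLattice{H}$ has finite index in $\Lambda$ (Lemma \ref{lem:Gamma contains lLambda}), and exclude $\theta$ by the non-trivial pairing. Your passage to Lagrangians of the finite quotient $\LargeLattice{H}/\SmallLattice{H}$ with its induced non-degenerate alternating form is only a repackaging of this (the paper works directly with sublattices and introduces the same pairing $B$ later, in Lemma \ref{lem:quotient by large lattice}); the one step the paper spells out that you leave implicit is the construction of an actual line bundle of Néron--Severi class $H$ on $T^{\an}/\Lambda'$ (possible because $H\vert_{\Lambda'}$ is $\G_m$-symmetric) together with the verification $\delta(f_*L)=H$, which is needed before Proposition \ref{prop:K(L) for line bundle on cover} can be invoked to conclude admissibility.
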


 \begin{proof}
 If $\SmallLattice{H} = \LargeLattice{H}$, we can take $\Lambda'= \LargeLattice{H}$. Otherwise, 
 by definition of $\SmallLattice{H}$, there exists $\delta\in \LargeLattice{H}$ such that $[\delta,\theta]_H\neq [\theta,\delta]_H$. Moreover, the restriction of $H$ to $\SmallLattice{H}+\Z\delta$ is symmetric and by Lemma \ref{lem:Gamma contains lLambda}, $\SmallLattice{H}+\Z\delta$ has finite index in $\Lambda$. Let $\Lambda'$ be maximal among the sublattices of $\Lambda$ that contain $\SmallLattice{H}+\Z\delta$ and on which $H$ is symmetric. Let $L$ be a line bundle on $B\coloneqq N\otimes_\Z\G_m^\an/\Lambda'$ of Néron-Severi class $H$, and let $f\colon  B\to A$ denote the quotient map. By construction, we have $\delta(f_*L)=H$, and by Proposition \ref{prop:K(L) for line bundle on cover} it follows that $\Lambda'$ is $H$-admissible. 
Since $H$ is symmetric on $\Lambda'$ and $\delta\in \Lambda'$, we must have $\theta\notin \Lambda'$. 
 \end{proof}

\begin{example}\label{ex:no maximal sublattice on which symmetric}
In Example \ref{ex:algebraic NS not saturated}, the class $H\in \NS(A^\trop)$ (notation as in the example), is not contained in $\NS(A)$. Recall that
\begin{equation}
\label{eq:no maximal sublattice again}
[\lambda_1, \lambda_2]_H =1 \qquad \text{as well as} \qquad 
[\lambda_2,\lambda_1]_H =-1 \ .
\end{equation}
So $H$ is not $\G_m$-symmetric. But $2H$ is $\G_m$-symmetric as $[\lambda_2,\lambda_1]_{2H}=(-1)^2=1$. Therefore $H$ defines an element in $\NS(A)_\Q$. Moreover, if $\Lambda'$ is any index-$2$ sublattice of $\Lambda$ (notation as in the example), then the induced morphism $\Lambda'\to  M$ is $\G_m$-symmetric (e.g.\ if $\Lambda'=2\Z\lambda_1+\Z\lambda_2$, then both lines of \eqref{eq:no maximal sublattice again} are squared, making the form symmetric). In particular, there exists no \emph{maximal} sublattice of $\Lambda$ on which $H$ becomes $\G_m$-symmetric. 
\end{example}

\begin{corollary}
    Let $H\in \NS(A)_\Q$. Then we have
    \[
    \SmallLattice{H}= \bigcap_{\substack{\Lambda' \text{ is }\\ H\textrm{-admissible}} }\Lambda' \ .
    \]
    Moreover, for any $H$-admissible $\Lambda'\subseteq \Lambda$,
        \[ \SmallLattice{H}\subseteq \Lambda'\subseteq \LargeLattice{H}.\]
        
\end{corollary}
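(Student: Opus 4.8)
The plan is to establish the two-sided containment $\SmallLattice{H}\subseteq \Lambda'\subseteq \LargeLattice{H}$ for every $H$-admissible $\Lambda'$ first, and then to deduce the intersection formula from it together with Proposition \ref{prop:writing vector bundles as push-forwards}.

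For the second assertion, the inclusion $\Lambda'\subseteq \LargeLattice{H}$ is immediate from Proposition \ref{prop:K(L) for line bundle on cover}, which moreover characterizes an $H$-admissible $\Lambda'$ as a sublattice of $\LargeLattice{H}$ that is \emph{maximal} among those on which the bilinear pairing $H$ is symmetric. To prove $\SmallLattice{H}\subseteq \Lambda'$, I would take $\gamma\in\SmallLattice{H}$ and consider the lattice $\Lambda'+\Z\gamma$. Since $\gamma\in\SmallLattice{H}\subseteq\LargeLattice{H}$, this lattice still lies inside $\LargeLattice{H}$; and since $[\gamma,\lambda]_H=[\lambda,\gamma]_H$ holds for all $\lambda\in\LargeLattice{H}$ by the very definition of $\SmallLattice{H}$ (in particular for all $\lambda\in\Lambda'$), the form $H$ remains symmetric on $\Lambda'+\Z\gamma$. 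Maximality of $\Lambda'$ then forces $\Lambda'+\Z\gamma=\Lambda'$, i.e.\ $\gamma\in\Lambda'$.

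For the intersection formula, the inclusion $\SmallLattice{H}\subseteq\bigcap_{\Lambda'}\Lambda'$ is a direct consequence of the containment just proved. For the reverse inclusion I would argue by contradiction: suppose $\theta\in\bigcap_{\Lambda'}\Lambda'$. Since every $H$-admissible lattice lies in $\LargeLattice{H}$, and such lattices exist by Proposition \ref{prop:writing vector bundles as push-forwards}, we obtain $\theta\in\LargeLattice{H}$. If $\theta$ failed to lie in $\SmallLattice{H}$, that is $\theta\in\LargeLattice{H}\setminus\SmallLattice{H}$, then the second half of Proposition \ref{prop:writing vector bundles as push-forwards} would furnish an $H$-admissible $\Lambda'$ with $\theta\notin\Lambda'$, contradicting $\theta\in\bigcap_{\Lambda'}\Lambda'$. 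Hence $\theta\in\SmallLattice{H}$.

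I do not anticipate any serious obstacle, as the substantive work has already been carried out in Propositions \ref{prop:K(L) for line bundle on cover} and \ref{prop:writing vector bundles as push-forwards}. The only points requiring mild care are the verification that $H$ stays symmetric on $\Lambda'+\Z\gamma$ (which reduces precisely to the defining property of $\SmallLattice{H}$, restricted from $\LargeLattice{H}$ to $\Lambda'$) and the observation that the intersection ranges over a nonempty family, guaranteed by the existence statement in Proposition \ref{prop:writing vector bundles as push-forwards}.
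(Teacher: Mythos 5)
Your proof is correct and takes essentially the same route as the paper, which deduces the containment $\SmallLattice{H}\subseteq\Lambda'$ (and hence one inclusion) from Proposition \ref{prop:K(L) for line bundle on cover} and the reverse inclusion from the second half of Proposition \ref{prop:writing vector bundles as push-forwards}. Your adjoin-$\gamma$ maximality argument simply spells out the detail that the paper's two-line proof leaves implicit.
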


\begin{proof}
By Proposition \ref{prop:K(L) for line bundle on cover}, the lattice $\SmallLattice{H}$ is contained in the intersection, and by Proposition \ref{prop:writing vector bundles as push-forwards} the opposite inclusion holds as well. 
\end{proof}

\subsection{Uniformization of $M_{H,1}(A)$}

We have seen so far that simple semi-homogeneous bundles can be written as push-forwards of line bundles along free covers, but (in general) not uniquely. The following proposition controls the non-uniqueness.

\begin{proposition}
\label{prop:line bundle mod SmallLattice from simple vector bundle}
Let $H\in \NS(A)_\Q$.
For $i\in \{1,2\}$, let $\Lambda_i$ be an $H$-admissible sublattice of $\Lambda$ and let $L_i$ be a line bundle on $T^\an/\Lambda_i$ representing $H$ with $f_{1*}L_1\cong  f_{2*}L_2$, where $f_i\colon T^\an/\Lambda_i\to A$ denotes the projection.
Let $g_i\colon T^\an/\SmallLattice{H}\to T^\an/\Lambda_i$ denote the projection. Then there exists $\lambda\in \Lambda$ such that
\[
g_1^*L_1\cong T_{\overline\lambda}^*g_2^*L_2.
\]
\end{proposition}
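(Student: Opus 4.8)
The plan is to pull both line bundles back to the common cover $T^\an/\SmallLattice{H}$ and to compare the two decompositions of $p^*(f_{1*}L_1)$ into line bundles that the two presentations produce, where $p\colon T^\an/\SmallLattice{H}\to A$ denotes the composite isogeny. The key input is that $p^*$ of a simple bundle splits as a sum of (translated) line bundles in two ways, and that such a splitting is unique.

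First I would record the geometric setup. By the corollary immediately preceding this proposition, the $H$-admissibility of $\Lambda_1$ and $\Lambda_2$ gives the inclusions $\SmallLattice{H}\subseteq \Lambda_i\subseteq \LargeLattice{H}$ for $i\in\{1,2\}$, so the projections $g_i\colon T^\an/\SmallLattice{H}\to T^\an/\Lambda_i$ exist and $p=f_1\circ g_1=f_2\circ g_2$. Writing $E\coloneqq f_{1*}L_1\cong f_{2*}L_2$, both presentations show $E$ has rank $[\Lambda:\Lambda_i]$, so $[\Lambda:\Lambda_1]=[\Lambda:\Lambda_2]=n(H)$. Since $f_i$ is an $(\Lambda/\Lambda_i)$-torsor, the same computation as in Lemma \ref{lem:slope of push-forward} yields
\[
f_i^*f_{i*}L_i\cong\bigoplus_{\mu\in\Lambda/\Lambda_i}T_{\overline\mu}^*L_i \ ,
\]
where $\overline\mu$ is the class on $T^\an/\Lambda_i$ of a representative $\mu\in\Lambda$. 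Pulling back along $g_i$ and using that $g_i$ is a quotient homomorphism, so that $g_i^*T_{\overline\mu}^*=T_{\overline\mu}^*g_i^*$ with $\overline\mu$ now read on $T^\an/\SmallLattice{H}$, I obtain
\[
p^*E\cong\bigoplus_{\mu\in\Lambda/\Lambda_i}T_{\overline\mu}^*g_i^*L_i
\]
as vector bundles on $T^\an/\SmallLattice{H}$, for each $i\in\{1,2\}$. Since $E=f_{1*}L_1\cong f_{2*}L_2$, the right-hand sides for $i=1$ and $i=2$ are isomorphic.

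Finally I would invoke uniqueness of the decomposition into indecomposables. The cover $T^\an/\SmallLattice{H}$ is (the analytification of) an abelian variety isogenous to $A$, hence proper, so the Krull--Schmidt theorem holds for vector bundles on it, and each summand $T_{\overline\mu}^*g_i^*L_i$ is a line bundle, hence indecomposable. Matching the two decompositions as multisets of isomorphism classes, the $\mu=0$ summand $g_1^*L_1$ on the left must coincide with some summand $T_{\overline\lambda}^*g_2^*L_2$ on the right, for a representative $\lambda\in\Lambda$ of a class in $\Lambda/\Lambda_2$. This is exactly the asserted isomorphism $g_1^*L_1\cong T_{\overline\lambda}^*g_2^*L_2$.

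The main obstacle is the legitimacy of the final matching step: one must be sure that a direct sum of line bundles on $T^\an/\SmallLattice{H}$ decomposes uniquely into indecomposables. This is guaranteed by Krull--Schmidt for coherent sheaves on a proper scheme over a field once we note that $T^\an/\SmallLattice{H}$ is algebraic; the commutation $g_i^*T_{\overline\mu}^*=T_{\overline\mu}^*g_i^*$ and the torsor formula are routine and essentially already appear in the excerpt. As an alternative to Krull--Schmidt, one could instead verify the isomorphism directly on factors of automorphy: $g_1^*L_1$ and $T_{\overline\lambda}^*g_2^*L_2$ have the same Néron--Severi part $H\vert_{\SmallLattice{H}}$, and the condition $\tfrac{r_1(\gamma)}{r_2(\gamma)\,[\lambda,\gamma]_H}=\langle\gamma,m\rangle$ for $\gamma\in\SmallLattice{H}$ can be solved for suitable $\lambda\in\Lambda$ and $m\in M$, but the Krull--Schmidt route avoids this computation entirely.
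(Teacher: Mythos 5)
Your proposal is correct and follows essentially the same route as the paper's own proof: both pull the two presentations of $E$ back to $T^\an/\SmallLattice{H}$ via the torsor formula $f_i^*f_{i*}L_i\cong\bigoplus_{\overline\mu\in\Lambda/\Lambda_i}T_{\overline\mu}^*L_i$, rewrite the summands as translates of $g_i^*L_i$, and then invoke the Krull--Schmidt theorem to match the $\overline\mu=0$ summand $g_1^*L_1$ with some $T_{\overline\lambda}^*g_2^*L_2$. Your added remarks (properness of $T^\an/\SmallLattice{H}$ justifying Krull--Schmidt, and the commutation $g_i^*T_{\overline\mu}^*=T_{\overline\mu}^*g_i^*$) only make explicit what the paper leaves implicit.
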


\begin{proof}
Since $f_i$ is a $\Lambda/\Lambda_i$-torsor, we have
    \[
    f_i^*f_{i*}L_i\cong \bigoplus_{\overline\lambda \in \Lambda/\Lambda_i} T_{\overline \lambda}^*L_i \ .
    \]
 Let $\lambda_1,\ldots,\lambda_k\in \Lambda$ be a set of representatives for $\Lambda/\Lambda_2$, and let $h\colon T^\an/\SmallLattice{H} \to A$ denote the projection. Then we have
    \[
    \bigoplus_{\overline\lambda \in \Lambda/\Lambda_1} g_1^*T_{\overline \lambda}^*L_1 
    \cong h^*f_{1*}L_1 
    \cong h^*f_{2*}L_2 
    \cong \bigoplus_{\overline\lambda \in \Lambda/\Lambda_2} g_2^*T_{\overline\lambda}^*L_2 \ .
    \cong \bigoplus_{i=1}^k T_{\overline \lambda_i}^*g_2^*L_2 \ .
    \]
    Therefore, we have $T_{\overline\lambda_i}^*g_2^*L_2 \cong g_1^*L_1$ for some $1\leq i\leq k$ by the Krull-Schmidt Theorem. 
\end{proof}

\begin{nota}
    As above, let $\mc S_H$ denote the set of isomorphism classes of simple vector bundles  on $A$ of slope $H$. By Proposition \ref{prop:line bundle mod SmallLattice from simple vector bundle} we obtain a well-defined map 
\[
\chi_H\colon \mc S_H \longrightarrow \Pic^H\big(T^\an\big/\SmallLattice{H}\big)/\Lambda \,
\]
where $\Lambda$ acts by pull-backs via translations. By Lemma \ref{lem:quotient of picard by Galois group as simple vector bundles} we have
\[
\psi_H\colon \mc S_H\xlongrightarrow{\sim} \Pic^H(A_H) \ ,
\]
where $A_H=\big(A^\vee/\Sigma(H)\big)^\vee$. In what follows we identify  $\Pic^H\big(T^\an/\SmallLattice{H}\big)\big/\Lambda$ with $\Pic^H(A_H)$.

\end{nota}

\begin{lemma}
    \label{lem:quotient by large lattice}
    Let $\Lambda'$ be a lattice between $\SmallLattice{H}$ and $\LargeLattice{H}$. Then pull-back induces an isomorphism
    \[
    \Pic^H(T^\an/\Lambda')\big/\LargeLattice{H}\cong \Pic^H(T^\an/\SmallLattice{H}).
    \]
\end{lemma}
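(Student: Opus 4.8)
The plan is to make everything explicit through the non-Archimedean Appell--Humbert description and then reduce the claimed isomorphism of torsors to an exact sequence of the underlying degree-zero parts. By Appell--Humbert, $\Pic^H(T^\an/\Lambda')$ is (when it is nonempty, i.e.\ when $H\vert_{\Lambda'}$ is $\G_m$-symmetric, which is the relevant case since $\Pic^H(T^\an/\SmallLattice{H})$ is always nonempty) a torsor under $\Pic^0(T^\an/\Lambda')\cong \Hom(\Lambda',\G_m(K))/M$, and likewise for $\SmallLattice{H}$; here $M$ embeds via $m\mapsto\langle -,m\rangle$. Pull-back along $g\colon T^\an/\SmallLattice{H}\to T^\an/\Lambda'$ is a map of torsors lying over the restriction homomorphism $\phi\colon \Pic^0(T^\an/\Lambda')\to \Pic^0(T^\an/\SmallLattice{H})$, $[s]\mapsto[s\vert_{\SmallLattice{H}}]$, while the $\LargeLattice{H}$-action by translations acts through the torsor structure via $\alpha\colon \LargeLattice{H}\to \Pic^0(T^\an/\Lambda')$, $\lambda_0\mapsto[s_{\lambda_0}]$ with $s_{\lambda_0}(\lambda)=\langle\lambda_0,H(\lambda)\rangle$ (using $T_{\overline{\lambda_0}}^\ast L_{(H,r)}=L_{(H,r')}$ with $r'/r=\langle\lambda_0,H(-)\rangle$). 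For a torsor map lying over a group homomorphism, descending to an isomorphism $\Pic^H(T^\an/\Lambda')/\LargeLattice{H}\xrightarrow{\sim}\Pic^H(T^\an/\SmallLattice{H})$ is equivalent to exactness of
\[
\LargeLattice{H}\xrightarrow{\ \alpha\ }\Pic^0(T^\an/\Lambda')\xrightarrow{\ \phi\ }\Pic^0(T^\an/\SmallLattice{H})\longrightarrow 0,
\]
so this is what I would prove.

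Two of the three statements are routine. Surjectivity of $\phi$ holds because the restriction $\Hom(\Lambda',\G_m(K))\to\Hom(\SmallLattice{H},\G_m(K))$ is surjective: $\G_m(K)=K^\ast$ is divisible, hence an injective $\Z$-module, so characters of the finite-index subgroup $\SmallLattice{H}$ extend to $\Lambda'$. The inclusion $\operatorname{im}\alpha\subseteq\ker\phi$ (i.e.\ $\phi\circ\alpha=0$) follows from the defining property of $\SmallLattice{H}$: for $\gamma\in\SmallLattice{H}$ and $\lambda_0\in\LargeLattice{H}$ one has $s_{\lambda_0}(\gamma)=\langle\lambda_0,H(\gamma)\rangle=[\lambda_0,\gamma]_H=[\gamma,\lambda_0]_H=\langle\gamma,H(\lambda_0)\rangle$, and since $H(\lambda_0)\in M$ this shows $s_{\lambda_0}\vert_{\SmallLattice{H}}\in M\vert_{\SmallLattice{H}}$.

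The heart of the argument, and the step I expect to be the main obstacle, is exactness in the middle, $\ker\phi\subseteq\operatorname{im}\alpha$. I would first identify $\ker\phi$: after modifying a representative $s$ by a suitable $\langle -,m\rangle$ with $m\in M$, one may assume $s\vert_{\SmallLattice{H}}=1$, so $\ker\phi$ is exactly the image in $\Pic^0(T^\an/\Lambda')$ of the subgroup $\Hom(\Lambda'/\SmallLattice{H},\G_m(K))\subseteq\Hom(\Lambda',\G_m(K))$ of characters trivial on $\SmallLattice{H}$. To match this with $\operatorname{im}\alpha$, I would introduce the alternating commutator pairing $e(\lambda,\mu)=[\lambda,\mu]_H\,[\mu,\lambda]_H^{-1}$ on $\LargeLattice{H}$, whose radical is precisely $\SmallLattice{H}$ by definition, so that it descends to a \emph{nondegenerate} alternating pairing $\overline e$ on the finite group $\LargeLattice{H}/\SmallLattice{H}$. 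Writing $s_{\lambda_0}=e(\lambda_0,-)\cdot\langle -,H(\lambda_0)\rangle$ and noting the second factor lies in $M\vert_{\Lambda'}$, one gets $[s_{\lambda_0}]=[e(\lambda_0,-)\vert_{\Lambda'}]$ with $e(\lambda_0,-)\vert_{\Lambda'}$ trivial on $\SmallLattice{H}$.

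Finally, nondegeneracy of $\overline e$ yields an isomorphism $\LargeLattice{H}/\SmallLattice{H}\xrightarrow{\sim}\Hom(\LargeLattice{H}/\SmallLattice{H},\G_m(K))$ (a nondegenerate pairing of a finite abelian group into $\mu_\infty\subset K^\ast$, using that $K$ is algebraically closed), and composing with the surjective restriction $\Hom(\LargeLattice{H}/\SmallLattice{H},\G_m(K))\to\Hom(\Lambda'/\SmallLattice{H},\G_m(K))$ (again divisibility of $K^\ast$) shows that every character of $\Lambda'/\SmallLattice{H}$ is of the form $e(\lambda_0,-)\vert_{\Lambda'}$ for some $\lambda_0\in\LargeLattice{H}$. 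Hence $\operatorname{im}\alpha$ equals the image of $\Hom(\Lambda'/\SmallLattice{H},\G_m(K))$, which is exactly $\ker\phi$, completing the exact sequence and therefore the proof. As a consistency check, both groups have order $[\Lambda':\SmallLattice{H}]$, the degree of the isogeny $g$; so one could alternatively conclude $\operatorname{im}\alpha=\ker\phi$ from the inclusion $\operatorname{im}\alpha\subseteq\ker\phi$ by comparing these orders.
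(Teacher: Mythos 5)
Your proof is correct and takes essentially the same approach as the paper: your commutator pairing $e$ is exactly the paper's pairing $B(\lambda_1,\lambda_2)=[\lambda_1,\lambda_2]_H/[\lambda_2,\lambda_1]_H$, and the decisive step---nondegeneracy of the induced pairing on the finite quotient $\LargeLattice{H}/\SmallLattice{H}$ combined with divisibility of $\G_m(K)$, showing that the characters $e(\lambda_0,-)\vert_{\Lambda'}$ are precisely those trivial on $\SmallLattice{H}$ and hence precisely the kernel of pull-back---is the paper's argument verbatim. Your torsor-equivariance formalism and the explicit surjectivity check are just careful packaging of what the paper's proof leaves implicit.
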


\begin{proof}
   For $\lambda\in \LargeLattice{H}$ and $L(H,r)\in \Pic^H(N\otimes_\Z\G_m^\an)(K)/\Lambda'$, the translate $T_{\overline\lambda}^*L(H,r)$ is given by $L(H,r')$, where
   \[
    \frac{r'}{r}(\lambda')=[\lambda,\lambda']_H= \frac{[\lambda,\lambda']_H}{[\lambda',\lambda]_H}[\lambda',\lambda]_H \ .
   \]
   
   Since $[\lambda',\lambda]_H= \big\langle \lambda', H(\lambda)\big\rangle $ and $H(\lambda)\in M$, we have
   \[
    T_{\overline\lambda}^*L(H,r)= L\big(H,B(\lambda,-)r\big) \ ,
   \]
   where we denote
   \[
    B\colon \LargeLattice{H}\times\LargeLattice{H}\longrightarrow \G_m(K),\; \; (\lambda_1,\lambda_2)\longmapsto \frac{[\lambda_1,\lambda_2]_H}{[\lambda_2,\lambda_1]_H} \ .
   \]
   Note that $B(\lambda_1,\lambda_2)=B(\lambda_2,\lambda_1)^{-1}$ and that $B(\lambda_1,\lambda_2)=1$ for all $\lambda_2\in \LargeLattice{H}$ if and only if  $\lambda_2\in \SmallLattice{H}$. In particular, the bilinear pairing $B$ induces an injection
   \[
    \LargeLattice{H}/\SmallLattice{H}\longrightarrow \Hom\big(\LargeLattice{H}/\SmallLattice{H},\G_m(K)\big) \ ,
   \]
   which is in fact a bijection because both domain and target have the same cardinality. Since $\G_m(K)$ is divisible, we conclude that the linear maps $\Lambda'\to \G_m(K)$ arising as $B(\lambda,-)$ for some $\lambda\in \LargeLattice{H}$ are precisely those that are identically $1$ on $\SmallLattice{H}$. These in turn are exactly those that are in the kernel of the isogeny
   \[
    \Hom(\Lambda',\G_m^\an)\longrightarrow \Hom(\SmallLattice{H},\G_m^\an)
   \]
   of tori that induces the pull-back of line bundles.   
\end{proof}

It remains to understand how the action of $\Lambda/\LargeLattice{H}$ changes $\Pic^H(T^\an/\SmallLattice{H})$. It turns out that this quotient does not further affect the lattice $\SmallLattice{H}$ that we divide by, but instead changes the cocharacter lattice $N$. 

\begin{nota}
  In what follows, we denote by $H^\vee\colon N_\Q\to \Lambda^\vee_\Q$, the dual of $H$.  
\end{nota}

\begin{definition}
    We define 
    \[
    \NLargeLattice{H}=\big\{ n\in N \ \big\vert \ H^\vee(n)\in \Lambda^\vee \big\} 
    \]
    and $\MLargeLattice{H}=\Hom(\NLargeLattice{H},\Z)$.
\end{definition}

\begin{lemma}
\label{lem:short exact sequence for large M}
    We have
    \[
    \MLargeLattice{H}=M+H(\Lambda) \ .
    \]
    More precisely, there is a short exact sequence
    \[
    0 \longrightarrow \LargeLattice{H} \longrightarrow M\oplus \Lambda \longrightarrow \MLargeLattice{H} \longrightarrow 0 \ ,
    \]
    where the first map sends $\lambda$ to $\big(-H(\lambda), \lambda\big)$, and the second sends $(m,\lambda)$ to $\big(m+H(\lambda)\big)\big\vert_{\NLargeLattice H}$.
\end{lemma}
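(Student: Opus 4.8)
The plan is to first prove the identity $\MLargeLattice{H} = M + H(\Lambda)$ by lattice duality, and then read off the short exact sequence by a direct verification. Throughout I would identify $M_\Q$ with $\Hom(N_\Q,\Q)$ via the perfect evaluation pairing $\langle-,-\rangle$, so that $M^\vee = N$ and, for any full-rank lattice $L$ in $M_\Q$ or $N_\Q$, the biduality $L^{\vee\vee}=L$ holds.

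First I would observe that $M' \coloneqq M + H(\Lambda)$ is a full-rank lattice in $M_\Q$: it is finitely generated (since $M$ is and $H(\Lambda)$ is the image of the finitely generated group $\Lambda$), torsion-free as a subgroup of the $\Q$-vector space $M_\Q$, and of full rank because it contains $M$. Its dual lattice in $N_\Q$ is $(M')^\vee = M^\vee \cap H(\Lambda)^\vee$, since the dual of a sum is the intersection of the duals. As $M^\vee = N$, this equals $\{n\in N : \langle n, H(\lambda)\rangle\in\Z \text{ for all }\lambda\in\Lambda\}$, which is exactly $\NLargeLattice{H}$ by the definition $\NLargeLattice{H}=\{n\in N : H^\vee(n)\in\Lambda^\vee\}$ together with $\langle H^\vee(n),\lambda\rangle = \langle n, H(\lambda)\rangle$. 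Applying biduality now gives $M + H(\Lambda) = M' = (M')^{\vee\vee} = (\NLargeLattice{H})^\vee = \MLargeLattice{H}$, where the last equality is the standard identification of $\Hom(\NLargeLattice{H},\Z)$ with the dual lattice $\{m\in M_\Q : \langle n,m\rangle\in\Z\text{ for all }n\in\NLargeLattice{H}\}$, valid because $\NLargeLattice{H}=(M')^\vee$ has full rank and hence finite index in $N$.

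It then remains to verify exactness of the stated sequence. The map $\alpha(\lambda)=(-H(\lambda),\lambda)$ lands in $M\oplus\Lambda$ precisely because $H(\lambda)\in M$ for $\lambda\in\LargeLattice{H}$ (the defining property of $\LargeLattice{H}$), and it is injective by inspecting the second coordinate. The map $\beta(m,\lambda)=(m+H(\lambda))|_{\NLargeLattice{H}}$ lands in $\MLargeLattice{H}$ since $\langle n,m\rangle\in\Z$ and $\langle n,H(\lambda)\rangle\in\Z$ for every $n\in\NLargeLattice{H}$, and clearly $\beta\circ\alpha=0$. For $\ker\beta\subseteq\im\alpha$, note that an element of $M_\Q$ vanishing on the finite-index sublattice $\NLargeLattice{H}$ vanishes on $N_\Q$ and is therefore $0$; hence $(m,\lambda)\in\ker\beta$ forces $m=-H(\lambda)$ in $M_\Q$, so $H(\lambda)=-m\in M$, i.e.\ $\lambda\in\LargeLattice{H}$ and $(m,\lambda)=\alpha(\lambda)$. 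Finally, $\beta$ is surjective: any $\phi\in\MLargeLattice{H}\subseteq M_\Q$ lies in $M+H(\Lambda)$ by the equality above, say $\phi=m+H(\lambda)$, and then $\beta(m,\lambda)=\phi|_{\NLargeLattice{H}}=\phi$.

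The only genuine content is the inclusion $\MLargeLattice{H}\subseteq M+H(\Lambda)$ — equivalently the surjectivity of $\beta$ — and I expect this to be the main obstacle. The reverse inclusion $M+H(\Lambda)\subseteq\MLargeLattice{H}$ is an elementary integrality check, whereas the forward one genuinely needs that $M+H(\Lambda)$ is saturated with respect to the pairing, which is precisely what biduality of full-rank lattices supplies. The one point requiring care is confirming that $M+H(\Lambda)$ is a full-rank lattice so that biduality applies, and keeping the two dualities (in $N_\Q$ and in $M_\Q$) correctly paired.
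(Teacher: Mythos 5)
Your proof is correct, but it takes a genuinely different route from the paper's. The paper argues by counting: it exhibits the two injections $\Lambda/\LargeLattice{H}\to\MLargeLattice{H}/M$, $\overline\lambda\mapsto\overline{H(\lambda)}$, and $N/\NLargeLattice{H}\to\Hom(\LargeLattice{H},\Z)\big/\Lambda^\vee$, $\overline n\mapsto\overline{H^\vee(n)}$, and combines them with the index equalities $[\MLargeLattice{H}:M]=[N:\NLargeLattice{H}]$ and $[\Lambda:\LargeLattice{H}]=\big[\Hom(\LargeLattice{H},\Z):\Lambda^\vee\big]$ (dualization reverses inclusions and preserves indices) to sandwich the four indices and force both injections to be bijections; surjectivity of the first one is precisely the equality $\MLargeLattice{H}=M+H(\Lambda)$ and the right-exactness of the sequence. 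You instead compute the dual lattice of $M'=M+H(\Lambda)$ directly, using $(A+B)^\vee=A^\vee\cap B^\vee$ to identify it with $\NLargeLattice{H}$, and then invoke biduality $M'=(M')^{\vee\vee}$ for the full-rank lattice $M'$. Both arguments rest on the same standard duality theory of full-rank lattices, but yours is more direct: it produces the equality in one stroke and makes explicit exactly where finite generation and full rank are needed (to justify biduality). The paper's symmetric formulation buys something else, namely the dual bijection $N/\NLargeLattice{H}\cong\Hom(\LargeLattice{H},\Z)\big/\Lambda^\vee$ as a by-product, and it directly identifies the cokernel of $M\hookrightarrow\MLargeLattice{H}$ with $\Lambda/\LargeLattice{H}$, which is the form of the statement used elsewhere. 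Your explicit verification of exactness (in particular the middle-exactness step, which uses that $\NLargeLattice{H}$ spans $N_\Q$ so that restriction to it is injective on $M_\Q$) fills in details the paper dismisses as clear.
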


\begin{proof}
    By definition of $\LargeLattice{H}$, there is an injection
    \[
    \Lambda/\LargeLattice{H}\longrightarrow \MLargeLattice{H}/M ,\;\; \overline\lambda\longmapsto \overline{H(\lambda)}.
    \]
    On the other hand, by definition of $\NLargeLattice{H}$, we also have an injection
    \[
    N/\NLargeLattice{H}\longrightarrow \Hom(\LargeLattice{H},\Z)\big/\Lambda^\vee,\;\; \overline n\longmapsto \overline{H^\vee(n)}.
    \]
    Since 
    \[
    [\MLargeLattice{H}:M]=[N:\NLargeLattice{H}] \qquad\text{and}\qquad [\Lambda:\LargeLattice{H}]=\big[\Hom(\LargeLattice{H},\Z):\Lambda^\vee\big] \ ,
    \] 
    it follows that both of these injections are in fact bijections, proving the first part of the assertion and the exactness of the sequence to the right. The exactness on the left is clear and exactness in the middle follows from the definition of $\LargeLattice{H}$.
\end{proof}

\begin{lemma}
\label{lem:extending the pairing}
    There exists a unique pairing 
    \[
    B\colon\SmallLattice{H}\times \MLargeLattice{H}\longrightarrow \G_m(K)
    \]
    such that $B(\lambda,m)= \langle \lambda, m\rangle$ and $B\big(\lambda,H(\lambda')\big)= [\lambda',\lambda]_H$ for all $\lambda\in \SmallLattice{H}$, $m\in M$, and $\lambda'\in \Lambda$.
\end{lemma}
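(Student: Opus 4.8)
The plan is to establish uniqueness first and then existence, with the existence part built directly on the short exact sequence of Lemma \ref{lem:short exact sequence for large M}. Throughout I read ``pairing'' as a map that is a group homomorphism in each variable (bi-multiplicative into $\G_m(K)$), which is what the later use of $B(\lambda,-)$ requires.

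\emph{Uniqueness.} Since $\MLargeLattice{H}=M+H(\Lambda)$ by Lemma \ref{lem:short exact sequence for large M}, every element of $\MLargeLattice{H}$ is a sum $m+H(\lambda')$ with $m\in M$ and $\lambda'\in\Lambda$. A pairing $B$ that is a homomorphism in its second argument is therefore completely determined by its values on $M$ and on $H(\Lambda)$, and these are exactly prescribed by the two required identities $B(\lambda,m)=\langle\lambda,m\rangle$ and $B(\lambda,H(\lambda'))=[\lambda',\lambda]_H$. Hence at most one such $B$ exists. (One should note that $M$ and $H(\Lambda)$ overlap, namely $H(\lambda')\in M$ whenever $\lambda'\in\LargeLattice{H}$, so the two prescriptions must be consistent there; for such $\lambda'$ consistency reads $\langle\lambda,H(\lambda')\rangle=[\lambda,\lambda']_H \overset{!}{=}[\lambda',\lambda]_H$, which holds precisely because $\lambda\in\SmallLattice{H}$.)

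\emph{Existence.} For a fixed $\lambda\in\SmallLattice{H}$ I would first define a homomorphism on the free group $M\oplus\Lambda$ and then show it descends along the quotient map of Lemma \ref{lem:short exact sequence for large M}. Concretely, set
\[
\tilde B_\lambda\colon M\oplus\Lambda\longrightarrow \G_m(K),\qquad
(m,\lambda')\longmapsto \langle\lambda,m\rangle\cdot[\lambda',\lambda]_H \ ,
\]
which is manifestly a homomorphism. By Lemma \ref{lem:short exact sequence for large M} we have $\MLargeLattice{H}=(M\oplus\Lambda)/\LargeLattice{H}$, where $\delta\in\LargeLattice{H}$ maps to $\big(-H(\delta),\delta\big)$. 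To descend $\tilde B_\lambda$ to $\MLargeLattice{H}$ it suffices to check that it kills this image, and
\[
\tilde B_\lambda\big(-H(\delta),\delta\big)=\langle\lambda,H(\delta)\rangle^{-1}\cdot[\delta,\lambda]_H
=[\lambda,\delta]_H^{-1}\cdot[\delta,\lambda]_H \ .
\]
For $\lambda\in\SmallLattice{H}$ and $\delta\in\LargeLattice{H}$ the defining symmetry of $\SmallLattice{H}$ gives $[\lambda,\delta]_H=[\delta,\lambda]_H$, so this equals $1$. Thus $\tilde B_\lambda$ factors through a homomorphism $B_\lambda\colon\MLargeLattice{H}\to\G_m(K)$, and I would define $B(\lambda,\mu)=B_\lambda(\mu)$. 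Evaluating on the representatives $(m,0)$ and $(0,\lambda')$ recovers $B(\lambda,m)=\langle\lambda,m\rangle$ and $B(\lambda,H(\lambda'))=[\lambda',\lambda]_H$, as required; bilinearity in the first variable is immediate since $\tilde B_\lambda$ depends multiplicatively on $\lambda$.

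The one genuine point, and the step I expect to carry all the weight, is the descent computation: everything else is formal manipulation of the exact sequence and of the bilinear pairings. The content is that the relations imposed by passing from $M\oplus\Lambda$ to $\MLargeLattice{H}$ are exactly the symmetry relations $[\lambda,\delta]_H=[\delta,\lambda]_H$ for $\delta\in\LargeLattice{H}$ — which is precisely the condition cutting out $\SmallLattice{H}$ inside $\LargeLattice{H}$. In other words, the lattice $\SmallLattice{H}$ is engineered so that this pairing is well defined, and the proof makes that design explicit.
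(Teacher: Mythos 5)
Your proof is correct and takes essentially the same route as the paper: the paper's (very terse) proof likewise defines the pairing via the short exact sequence of Lemma \ref{lem:short exact sequence for large M}, with the whole content being the identity $[\lambda',\lambda]_H=[\lambda,\lambda']_H=\big\langle \lambda, H(\lambda')\big\rangle$ for $\lambda\in\SmallLattice{H}$ and $\lambda'\in\LargeLattice{H}$ — which is exactly your descent computation, written out in full.
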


\begin{proof}
By the exactness of the short exact sequence in Lemma \ref{lem:short exact sequence for large M}, the assertion follows from the fact that for $\lambda\in \SmallLattice{H}$ and $\lambda'\in\LargeLattice{H}$ we have
\[
[\lambda',\lambda]_H=[\lambda,\lambda']_H=\big\langle \lambda, H(\lambda')\big\rangle
\]
\end{proof}

By definition, the unique pairing from Lemma \ref{lem:extending the pairing} extends the pairing $\SmallLattice{H}\times M\to \G_m(K)$ coming from the embedding of $\SmallLattice{H}$ into $T=N\otimes_\Z\G_m^\an(K)$. Therefore, no confusion will arise if we denote it by $\langle -,-\rangle$ as well. Defining this extension of the pairing is equivalent to lifting the embedding of $\SmallLattice{H}$ into $N\otimes_\Z\G_m^\an(K)$ to an embedding  into $\NLargeLattice{H}\otimes_\Z\G_m^\an(K)$.

\begin{nota}
In what follows we write $\NLargeLattice{H}\otimes_\Z\G_m^\an/\SmallLattice{H}$ for the quotient by this particular lift of $\SmallLattice{H}$.
\end{nota}

\begin{lemma}
    \label{lem:description of quotient}
    Let $\Lambda'$ denote a lattice between $\SmallLattice{H}$ and $\LargeLattice{H}$. Then the pull-back induces an isomorphism
    \[
        \Pic^H\big(T^\an/\Lambda' \big)\big/\Lambda \cong \Pic^H\big(\NLargeLattice{H}\otimes_\Z\G_m^\an/\SmallLattice{H}\big)
    \]
\end{lemma}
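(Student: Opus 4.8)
The plan is to split the quotient by $\Lambda$ into two stages, treating $\LargeLattice{H}$ first by Lemma \ref{lem:quotient by large lattice} and the residual quotient $\Lambda/\LargeLattice{H}$ afterwards. Since $\Lambda'$ lies between $\SmallLattice{H}$ and $\LargeLattice{H}$, the $\Lambda$-action on $\Pic^H(T^\an/\Lambda')$ factors through $\Lambda/\Lambda'$, and I would write
\[
\Pic^H\big(T^\an/\Lambda'\big)\big/\Lambda = \Big(\Pic^H\big(T^\an/\Lambda'\big)\big/\LargeLattice{H}\Big)\Big/\big(\Lambda/\LargeLattice{H}\big) \cong \Pic^H\big(T^\an/\SmallLattice{H}\big)\big/\big(\Lambda/\LargeLattice{H}\big),
\]
where the last isomorphism is exactly Lemma \ref{lem:quotient by large lattice}. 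It thus remains to identify $\Pic^H(T^\an/\SmallLattice{H})\big/(\Lambda/\LargeLattice{H})$ with $\Pic^H(\NLargeLattice{H}\otimes_\Z\G_m^\an/\SmallLattice{H})$.

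Next I would compute the residual action of $\Lambda/\LargeLattice{H}$ on $\Pic^H(T^\an/\SmallLattice{H})$ through factors of automorphy. Representing a class by a factor $(H,r)$ with $r\colon\SmallLattice{H}\to\G_m(K)$, the non-Archimedean translation formula shows that $\overline\lambda$ acts by $r(\gamma)\mapsto r(\gamma)\cdot\langle\lambda,H(\gamma)\rangle = r(\gamma)\cdot[\lambda,\gamma]_H$ for $\gamma\in\SmallLattice{H}$ (the sign of $\lambda$ being immaterial for the quotient). I would then check that this is well defined modulo $\LargeLattice{H}$: for $\lambda\in\LargeLattice{H}$ we have $H(\lambda)\in M$, and since $\gamma\in\SmallLattice{H}$ the symmetry $[\lambda,\gamma]_H=[\gamma,\lambda]_H=\langle\gamma,H(\lambda)\rangle$ holds, so the twist is by the character $\langle-,H(\lambda)\rangle$ coming from $M$, hence trivial in $\Pic^H(T^\an/\SmallLattice{H})$.

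Finally I would recognize this residual action as the enlargement of the character lattice from $M$ to $\MLargeLattice{H}=M+H(\Lambda)$. By Lemma \ref{lem:extending the pairing} the Appell--Humbert description of $\Pic^H(\NLargeLattice{H}\otimes_\Z\G_m^\an/\SmallLattice{H})$ uses the \emph{same} cocycles $(H,r)$ on $\SmallLattice{H}$ as $\Pic^H(T^\an/\SmallLattice{H})$ (the extended pairing agrees with the original one on $M$, and $H(\SmallLattice{H})\subseteq M$), but with the finer equivalence that mods out by all characters $\langle-,m'\rangle$ with $m'\in\MLargeLattice{H}$ rather than only $m'\in M$. Thus pull-back along the isogeny $\NLargeLattice{H}\otimes_\Z\G_m^\an/\SmallLattice{H}\to T^\an/\SmallLattice{H}$ sends $[r]_M\mapsto[r]_{\MLargeLattice{H}}$, is surjective, and has fibres the orbits of the extra quotient $\MLargeLattice{H}/M$. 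Writing $m'=m+H(\lambda)$ via Lemma \ref{lem:short exact sequence for large M} and using $\langle\gamma,H(\lambda)\rangle=[\lambda,\gamma]_H$ from Lemma \ref{lem:extending the pairing}, the action of $\overline{H(\lambda)}\in\MLargeLattice{H}/M$ coincides with the residual action of $\overline\lambda$; since $\overline\lambda\mapsto\overline{H(\lambda)}$ is the isomorphism $\Lambda/\LargeLattice{H}\xrightarrow{\sim}\MLargeLattice{H}/M$ of Lemma \ref{lem:short exact sequence for large M}, the two quotients agree, and composing with the pull-back of Lemma \ref{lem:quotient by large lattice} yields the desired isomorphism.

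The main obstacle is the factor-of-automorphy bookkeeping: one must verify that pull-back to the enlarged torus leaves the cocycle $r$ on $\SmallLattice{H}$ literally unchanged, that the embedding of $\SmallLattice{H}$ into $\NLargeLattice{H}\otimes_\Z\G_m^\an$ is precisely the lift determined by the extended pairing of Lemma \ref{lem:extending the pairing} (so that the pairings on $\SmallLattice{H}\times M$ and $\SmallLattice{H}\times\MLargeLattice{H}$ are genuinely compatible), and that under $\Lambda/\LargeLattice{H}\cong\MLargeLattice{H}/M$ the two group actions match not merely set-theoretically but as actions. Everything else is a formal two-step quotient.
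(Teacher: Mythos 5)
Your proposal is correct and takes essentially the same route as the paper's own proof: reduce to $\Lambda'=\SmallLattice{H}$ via Lemma \ref{lem:quotient by large lattice}, compute the residual translation action on factors of automorphy as twisting by the characters $\langle -, H(\lambda)\rangle$ of the extended pairing, and identify the quotient by $\Lambda$ with the quotient by $\MLargeLattice{H}=M+H(\Lambda)$ using Lemmas \ref{lem:short exact sequence for large M} and \ref{lem:extending the pairing}. The only (cosmetic) difference is that you verify explicitly that the $\LargeLattice{H}$-action on $\Pic^H\big(T^\an/\SmallLattice{H}\big)$ is trivial via the symmetry defining $\SmallLattice{H}$, a point the paper leaves implicit in its reduction step.
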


\begin{proof}
  By Lemma \ref{lem:quotient by large lattice}, we already know that pulling-back to $T^\an=N\otimes_\Z \G_m^\an/\SmallLattice{H}$ is equivalent to dividing by the action of $\LargeLattice{H}$. We may therefore assume that $\Lambda'=\SmallLattice{H}$ and that the restriction of the $\Lambda$-action to $\LargeLattice{H}$ is trivial. 

  Given $\lambda\in \Lambda$ and $L(H,r)\in \Pic^H(N\otimes_\Z\G_m^\an/\SmallLattice{H})$, the translate $T^*_{\overline\lambda}L(H,r)$ is given by $L(H,r')$, where we have
  \[
    \frac{r'}{r}(\lambda')= [\lambda,\lambda']_H= \big\langle \lambda',H(\lambda)\big\rangle
  \]
  for all $\lambda'\in \SmallLattice{H}$. Here, the second equality uses the definition of the embedding of $\SmallLattice{H}$ in $\NLargeLattice{H}$, or dually the embedding of $\MLargeLattice{H}$ in $\Hom(\SmallLattice{H},\G_m^\an)$. By Lemma \ref{lem:short exact sequence for large M}, the given action of $\Lambda$ yields the same quotient as the action by $\MLargeLattice{H}$, that is
  \[
    \Pic^H\big(T^\an/\SmallLattice{H}\big)\big/\Lambda \cong \big(\Hom(\SmallLattice{H},\G_m^\an)\big/M\big)/\Lambda \cong \Hom\big(\SmallLattice{H},\G_m^\an\big)\big/\MLargeLattice{H}\cong \Pic^H\big(\NLargeLattice{H}\otimes_\Z\G_m^\an\big/\SmallLattice{H}\big) \ ,
  \]
  and it is immediate that the quotient map is precisely the pull-back map.
\end{proof}

By Lemma \ref{lem:description of quotient}, we may view $\chi_H$ as a map from $\mc S_H$ to $\Pic^H(\NLargeLattice{H}\otimes_\Z\G_m^\an/\SmallLattice{H})$.

\begin{proposition}
\label{prop:description of S_H}
The map
\[
\chi_H\colon \mc S_H\longrightarrow \Pic^H(\NLargeLattice{H}\otimes_\Z\G_m^\an/\SmallLattice{H})(K)
\]
is a bijection. In particular, we have 
\[
A_H\cong \big(\NLargeLattice{H}\otimes_\Z\G_m^\an\big)\big/\SmallLattice{H} \ ,
\]  
or dually
\[
A^\vee/\Sigma(H) \cong (\SmallLattice{H})^\vee\otimes_\Z\G_m^\an\big/\MLargeLattice{H} \ .
\]
\end{proposition}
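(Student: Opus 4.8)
The plan is to prove that $\chi_H$ is a bijection and then read off the two displayed isomorphisms from the group acting on the target. Throughout I regard $\chi_H$ as a map to $\Pic^H(\NLargeLattice{H}\otimes_\Z\G_m^\an/\SmallLattice{H})$ via Lemma \ref{lem:description of quotient}. The driving observation is that $A^\vee=\Pic^0(A)$ acts by tensoring on both sides: on $\mc S_H$ it acts transitively with stabilizer $\Sigma(H)$ (Theorem \ref{thm:existence of simple semi-homogeneous bundles}), and $\chi_H$ is equivariant. Given this, bijectivity will reduce to a comparison of stabilizers.

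First I would establish equivariance and surjectivity. For equivariance, writing $E=f_*L$ and using the projection formula gives $E\otimes P_x\cong f_*(L\otimes f^*P_x)$, and since $f^*P_x\in\Pic^0$ the lattice stays $H$-admissible; hence $\chi_H(E\otimes P_x)=\chi_H(E)\otimes[h^*P_x]$, where $h\colon T^\an/\SmallLattice{H}\to A$. Thus $A^\vee$ acts on the target through $P_x\mapsto[h^*P_x]$ and $\chi_H$ intertwines the two actions. For surjectivity, choose an $H$-admissible $\Lambda'$ (Proposition \ref{prop:writing vector bundles as push-forwards}; recall $\SmallLattice{H}\subseteq\Lambda'\subseteq\LargeLattice{H}$); by Lemma \ref{lem:quotient by large lattice} every class on $T^\an/\SmallLattice{H}$ is, modulo $\LargeLattice{H}\subseteq\Lambda$, the pull-back $g^*L$ of some $L\in\Pic^H(T^\an/\Lambda')$, and then $f_*L\in\mc S_H$ maps to it.

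Since the source is a single $A^\vee$-orbit and $\chi_H$ is surjective and equivariant, the target is a single orbit too, and $\chi_H$ is bijective if and only if the stabilizer of $\chi_H(E)$ equals that of $E$, namely $\Sigma(H)$. One inclusion is immediate from equivariance. For the other I would unwind both conditions in terms of the factor of automorphy $r_x$ of $P_x$: using that $f_*L\cong f_*L'$ exactly when $L'$ is a translate of $L$ (Lemma \ref{lem:quotient of picard by Galois group as simple vector bundles} together with the translation formula for factors of automorphy), membership $x\in\Sigma(H)$ becomes the identity $r_x|_{\Lambda'}=[\nu,-]_H\,\langle-,m\rangle$ on $\Lambda'$ for some $\nu\in\Lambda$, $m\in M$, whereas $x\in\mathrm{Stab}\big(\chi_H(E)\big)$ is the same identity but only on $\SmallLattice{H}$. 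The hard part is upgrading the identity from $\SmallLattice{H}$ to $\Lambda'$: after correcting by the known factor one is left with a homomorphism $\Lambda'\to\G_m(K)$ that is trivial on $\SmallLattice{H}$, i.e.\ a character of $\Lambda'/\SmallLattice{H}$; extending it to $\LargeLattice{H}/\SmallLattice{H}$ (using divisibility of $\G_m(K)$) and invoking the non-degeneracy of the pairing $B$ from the proof of Lemma \ref{lem:quotient by large lattice}, one writes it as $B(\nu',-)=[\nu',-]_H\,\langle-,-H(\nu')\rangle$ for some $\nu'\in\LargeLattice{H}\subseteq\Lambda$, which is of exactly the required shape, with $-H(\nu')\in M$. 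This pairing-extension step is where the real work sits, and I expect it to be the main obstacle.

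Once bijectivity is in hand, the induced map $A^\vee\to\Pic^0\big(\NLargeLattice{H}\otimes_\Z\G_m^\an/\SmallLattice{H}\big)$, $P_x\mapsto[h^*P_x]$, is an analytic surjective homomorphism whose kernel is the common stabilizer $\Sigma(H)$; computing $\Pic^0$ of the torus quotient therefore gives an isomorphism $A^\vee/\Sigma(H)\xlongrightarrow{\sim}(\SmallLattice{H})^\vee\otimes_\Z\G_m^\an/\MLargeLattice{H}$, which is the second displayed isomorphism. Finally, dualizing this — and using $A_H=(A^\vee/\Sigma(H))^\vee$ together with $\MLargeLattice{H}=\Hom(\NLargeLattice{H},\Z)$ from Lemma \ref{lem:short exact sequence for large M} — yields $A_H\cong\NLargeLattice{H}\otimes_\Z\G_m^\an/\SmallLattice{H}$, the first displayed isomorphism.
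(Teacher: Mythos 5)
Your proposal is correct, and it organizes the argument differently from the paper while resting on the same technical inputs. The paper proceeds by direct identification: Mukai's transitivity result together with the $\ker(f)$-torsor decomposition $f^*f_*M\cong\bigoplus_{\lambda}T^*_\lambda M$ gives $\mc S_H\cong \Pic^{f^*H}(A')/\Lambda$, and then Lemma \ref{lem:description of quotient} is invoked wholesale to identify this quotient with $\Pic^H\big(\NLargeLattice{H}\otimes_\Z\G_m^\an/\SmallLattice{H}\big)$; since that identification factors through $\chi_H$, bijectivity (in particular injectivity) is absorbed into that lemma. You instead make the $\Pic^0(A)$-equivariance explicit: transitivity on the source reduces bijectivity to equality of stabilizers, and the ``upgrade'' of the factor-of-automorphy identity from $\SmallLattice{H}$ to $\Lambda'$ that you flag as the main obstacle --- realizing a character of $\Lambda'/\SmallLattice{H}$ as $B(\mu,-)=[\mu,-]_H\,\langle -,-H(\mu)\rangle$ for some $\mu\in\LargeLattice{H}$ --- is exactly the divisibility/non-degeneracy argument inside the proof of Lemma \ref{lem:quotient by large lattice}, so that step goes through with no new ideas; your surjectivity step is also sound, since by Proposition \ref{prop:K(L) for line bundle on cover} the condition $\ker(f)\cap K(L)=0$ depends only on $\Lambda'$ and $H$, so $f_*L$ is simple for \emph{every} $L\in\Pic^{f^*H}(A')$. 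What your framing buys is a cleaner, more conceptual derivation of the two displayed isomorphisms: they fall out formally from the orbit map $A^\vee\to \Pic^0\big(\NLargeLattice{H}\otimes_\Z\G_m^\an/\SmallLattice{H}\big)$ being a surjection with kernel the common stabilizer $\Sigma(H)$, whereas the paper's last paragraph (computing $\ker(g^*)=\Sigma(E)$) makes essentially the same point much more tersely. What the paper's route buys is brevity: given Lemmas \ref{lem:quotient by large lattice} and \ref{lem:description of quotient} as black boxes, no stabilizer computation in factors of automorphy is needed. One small misattribution in your last step: $\MLargeLattice{H}=\Hom(\NLargeLattice{H},\Z)$ is the \emph{definition} of $\MLargeLattice{H}$; what you actually need from Lemma \ref{lem:short exact sequence for large M} is the identity $\MLargeLattice{H}=M+H(\Lambda)$, which matches the lattice appearing in $\Pic^0$ of the torus quotient.
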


\begin{proof}
    By Proposition \ref{prop:writing vector bundles as push-forwards}, there exists an $H$-admissible  sublattice  $\Lambda'$ of $\Lambda$ and a line bundle $L$ on $A'=T^\an/\Lambda'$ such that, if $f\colon A'\to A$ denotes the quotient map, the vector bundle $f_*L$ is simple and $\delta(f_*L)=H$. Given any simple vector bundle $E$ on $A$ with $\delta(E)=H$, there exists a line bundle $M\in \Pic^0(A)$ such that $E=f_*L\otimes M= f_*(L\otimes f^* M)$ by \cite[Proposition 6.17]{Mukai}. Therefore, the simple vector bundle on $A$ of slope $H$ are precisely the push-forwards along $f$ of line bundles in $\Pic^{f^*H}(A')$. If $M_1, M_2\in \Pic^{f^*H}(A')$ with $f_*M_1=f_*M_2$, then $f^*f_*M_1=f^*f_*M_2$. As $f$ is a $\Lambda/\Lambda'$-torsor, we have 
    \[
        f^*f_*M_i= \bigoplus_{\lambda\in \Lambda/\Lambda'} T^*_\lambda M_i
    \]
    and it follows that $M_2\cong T^*_\lambda M_1$ for some $\lambda\in \Lambda/\Lambda'$. Conversely, if $M_2\cong T^*_\lambda M_1$ for some $\lambda\in \Lambda/\Lambda'$, it is clear that $f_*M_1\cong f_*M_2$. We conclude that we can identify $\mc S_H$ with $\Pic^H\big(T^\an/\Lambda'\big)\big/\Lambda$. By Lemma \ref{lem:description of quotient}, the pull-back of line bundles in $\Pic^{f^*H}(A')$ to $\NLargeLattice{H}\otimes_\Z\G_m^\an/\SmallLattice{H}$ induces an isomorphism of $\Pic^{f^*H}(A')/\Lambda$ with $\Pic^H\big(\NLargeLattice{H}\otimes_\Z\G_m^\an/\SmallLattice{H}\big)$. This pull-back factors through the pull-back to $\Pic^H\big(T^\an/\SmallLattice{H}\big)/\Lambda$, and hence through $\chi_H$. 

    Let $g\colon \NLargeLattice{H}\otimes_\Z\G_m^\an/\SmallLattice{H}\to A$ be the quotient map and let $E\in \mc S_H$. for a line bundle $M\in \Pic^0(A)$ we have $\chi_H(E\otimes M)= \psi_H(E)\otimes g^*M$ and since $E\otimes M_1\cong E\otimes M_2$ for $M_1, M_2\in \Pic^0(A)$, the kernel of $g^*$ is precisely $\Sigma(E)$. 
\end{proof}

\section{Tropicalizing semi-homogeneous vector bundles}\label{section_tropicalization}

%\inder{notations introduced: $\nu$, $\nu\circ r$, $r^\trop$, $L^\trop$, $\Sigma(M_{H,1}(A))$, $\Sigma(M_{H,k}(A))$}\\
%\inder{definitions introduced: tropicalization of line bundles}

 We denote by $A^{\an}:=T^{\an}/\Lambda$ the non-Archimedean uniformization and by $A^{\trop}:=T^{\trop}/\Lambda$ the tropicalization of $A$. In this section we study the process of tropicalization for semi-homogeneous vector bundles on an abelian variety $A$ over $K$ that admits a totally degenerate reduction over its valuation ring $R$. We proceed in three steps: first the case of line bundles, then the case of simple semi-homogeneous vector bundles, and then the general case of semi-homogeneous vector bundles. 

\subsection{The case of line bundles}
\label{subsec:tropicalizing line bundles}

Given a line bundle $L$ on $A^{\an}=N\otimes_\Z\G_m^\an/\Lambda$, we can tropicalize it by tropicalizing factors of automorphy. Namely, if we write $L=L(H,r)$, then $H$ defines a symmetric form on $N_\R$. We can also postcompose $r$ with the valuation and obtain a map $\nu\circ r\colon \Lambda\to \R$. However, $\nu\circ r$ is not linear unless $H=0$. To remedy this, we define $r^\trop$ by
\[
r^\trop(\lambda)= \nu\big(r(\lambda)\big)-\frac 12 [\lambda,\lambda]_H^\R \ ,
\]
which is indeed linear: for $\lambda,\lambda' \in \Lambda$ we have
\begin{multline*}
r^\trop(\lambda+\lambda')= \nu\big(r(\lambda+\lambda')\big)-\frac 12 [\lambda+\lambda',\lambda+\lambda']_H^\R
= \\
=\nu \big(r(\lambda)\big)+ \nu\big(r(\lambda')\big)+[\lambda,\lambda']_H^\R -\frac 12 [\lambda,\lambda]_H^\R-[\lambda,\lambda']_H^\R- \frac 12 [\lambda',\lambda']_H^\R
= r^\trop(\lambda)+r^\trop(\lambda') \ ,    
\end{multline*}
where the first comes from applying the valuation to \eqref{eq:condition on non-Arch factor of automorphy}.

\begin{definition}
We define the \emph{tropicalization}  $L^\trop$ of $L$ to be the tropical line bundle $L^\trop=L(H, r^\trop)$. This is independent of the choice of the factors of automorphy.
\end{definition}

Suppose $K'$ is an algebraically closed non-Archimedean extension of $K$. Then, for $L\in\Pic^H(A)$, the base change $L_{K'}\in\Pic^H(A_{K'})$ is a line bundle on $A_{K'}$ with $A_{K'}^{\an}=T_{K'}^{\an}/\Lambda$. The factors of automorphy are compatible with base change. Hence the tropicalization is invariant under this operation. Therefore we have a well-defined tropicalization map 
\begin{equation*}
\trop\colon \Pic^H(A)^{\an}\longrightarrow \Pic^H(A^{\trop}\big)
\end{equation*}
which will turn out to be uniquely determined by its restriction to the dense subset $\Pic^H(A)\subseteq \Pic^H(A)^{\an}$, once we know it is continuous (a consequence of Theorem \ref{thm:tropicalization of line bundles is retraction} right below). 

The dual abelian variety $\Pic^0(A)$ is an abelian variety with maximally degenerate reduction and admits a strong deformation retraction $\tau\colon \Pic^0(A)^{\an}\rightarrow \Sigma\big(\Pic^0(A)\big)$ onto a closed subset $\Sigma\big(\Pic^0(A)\big)\subseteq \Pic^0(A)^{\an}$ that has the structure of a real torus, its \emph{non-Archimedean skeleton} (see \cite[Section 6.5]{Berkbook} but also \cite{Gubler_abeliantropicalization} for details). The variety $\Pic^H(A)$ is a torsor over $\Pic^0(A)$. Hence, its Berkovich analytification also admits a natural strong deformation retraction $\tau\colon \Pic^H(A)^{\an}\rightarrow \Sigma\big(\Pic^H(A)\big)$ onto a non-Archimedean skeleton $\Sigma\big(\Pic^H(A)\big)\subseteq \Pic^H(A)^{\an}$, which now canonically carries the structure of a torsor over a real torus. We note that the skeleton $\Sigma\big(\Pic^H(A)\big)$ coincides with the essential skeleton of $\Pic^H(A)$ in the sense of \cite{KontsevichSoibelman, MustataNicaise, NicaiseXu, NicaiseXuYu} by \cite[Proposition 4.3.2]{HalleNicaise}.

\begin{theorem}
\label{thm:tropicalization of line bundles is retraction}
    Let $H\in \NS(A)$. Then there exists a unique isomorphism $\phi\colon\Sigma(\Pic^H(A))\xrightarrow{\cong} \Pic^H(A^\trop)$ such that the diagram
        \[
    \begin{tikzcd}
        & \Sigma\big(\Pic^H(A)\big)\arrow[dd,"\phi"] \\
        \Pic^H(A)^\an \arrow[ur,"\tau"] \arrow[dr,"\trop"] &  \\ 
         & \Pic^H(A^\trop) \ ,
    \end{tikzcd}
    \]
    where $\tau$ denotes the retraction map, commutes.
\end{theorem}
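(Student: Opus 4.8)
The plan is to reduce everything to the case $H=0$, where the assertion is the well-known identification of the non-Archimedean skeleton of the totally degenerate abelian variety $\Pic^0(A)$ with its tropicalization, and then to transport this identification to the torsor $\Pic^H(A)$ by trivializing it with a rational point. To set this up I would first fix a factor of automorphy $(H,r_0)$, which exists because $H\in\NS(A)$ is $\G_m$-symmetric by \cite[Lemma 2.3]{bolu}. The class $p_0=[L_{(H,r_0)}]$ is a $K$-point of $\Pic^H(A)$, so the action of $\Pic^0(A)$ on the torsor gives an isomorphism of $K$-varieties
\[
t_{p_0}\colon \Pic^0(A)\xrightarrow{\sim}\Pic^H(A),\qquad [L_{(0,s)}]\longmapsto[L_{(H,r_0 s)}],
\]
since tensoring with $L_{(H,r_0)}$ multiplies factors of automorphy.

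Because the skeleton $\Sigma(\Pic^H(A))$ coincides with the essential skeleton by \cite[Proposition 4.3.2]{HalleNicaise}, it and its retraction $\tau$ depend only on the analytic space together with its canonical bundle, and hence are functorial for isomorphisms. Thus $t_{p_0}$ induces an isomorphism $\Sigma(t_{p_0})\colon\Sigma(\Pic^0(A))\xrightarrow{\sim}\Sigma(\Pic^H(A))$ commuting with the retractions. On the other hand, for $H=0$ the correction term $\tfrac12[\lambda,\lambda]^\R_H$ in the definition of $r^\trop$ vanishes, so the tropicalization $\trop_0\colon\Pic^0(A)^\an\to\Pic^0(N_\R)$ is the usual tropicalization of the totally degenerate abelian variety $\Pic^0(A)$. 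By \cite[Section 6.5]{Berkbook} and \cite{Gubler_abeliantropicalization} this tropicalization agrees with the retraction to the skeleton, giving a canonical isomorphism $\phi_0\colon\Sigma(\Pic^0(A))\xrightarrow{\sim}\Pic^0(N_\R)$ with $\phi_0\circ\tau=\trop_0$.

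Next I would compute how $t_{p_0}$ interacts with tropicalization. Writing a general point as $[L_{(H,r_0 s)}]$ with $s\in\Hom(\Lambda,\G_m)$ and using $r^\trop(\lambda)=\nu(r(\lambda))-\tfrac12[\lambda,\lambda]^\R_H$, one checks
\[
(r_0 s)^\trop=r_0^\trop+\nu\circ s,
\]
so that under $t_{p_0}$ the map $\trop\colon\Pic^H(A)^\an\to\Pic^H(N_\R)$ corresponds to $\trop_0$ followed by the tropical translation by $L(H,r_0^\trop)$. I would then define $\phi$ as the composite of $\Sigma(t_{p_0})^{-1}$, the isomorphism $\phi_0$, and this tropical translation; combining the three previous steps yields $\phi\circ\tau=\trop$, and $\phi$ is an isomorphism of torsors as a composite of such. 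Uniqueness is immediate since $\tau$ is surjective, and uniqueness in turn shows that $\phi$ is independent of the auxiliary choice of $r_0$.

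The main obstacle is the compatibility bookkeeping underlying the reduction: one must verify that the retraction $\tau$ is genuinely functorial for the algebraic isomorphism $t_{p_0}$, so that trivializing the torsor by a rational point does not disturb the skeleton, and that the translation-equivariance of $\trop$ over $\trop_0$ holds on the nose at the level of factors of automorphy rather than only up to the equivalences $L_{(H,r)}\cong L_{(H,r')}$. Once these two compatibilities are established, the remaining verifications are the routine computation above and the citation of the $H=0$ case.
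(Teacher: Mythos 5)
Your proposal is correct and follows essentially the same route as the paper's proof: both reduce to the case $H=0$ (where the skeleton of the totally degenerate abelian variety $\Pic^0(A)$ is identified with its tropicalization), trivialize the torsor $\Pic^H(A)$ by tensoring with a fixed line bundle $L_{(H,r_0)}$ and correspondingly tensor with $L(H,r_0^\trop)$ on the tropical side, verify commutativity via the factor-of-automorphy computation $(r_0 s)^\trop = r_0^\trop + \nu\circ s$, and deduce uniqueness from surjectivity of $\tau$. Your explicit appeal to the intrinsic nature of the essential skeleton to justify functoriality of $\tau$ under $t_{p_0}$ is a point the paper leaves implicit (it is absorbed into its torsor description of $\Sigma(\Pic^H(A))$ in the paragraph preceding the theorem), but it is the same argument.
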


\begin{proof}
First consider the case $H=0$. Then we have a uniformization $\Lambda^\vee\otimes_\Z\G_m^\an\to \Pic^0(A)$ in which a point 
\[
r\in (\Lambda^\vee\otimes_\Z\G_m^\an)(L)= \Hom\big(\Lambda, \G_m(L)\big)
\]
over some valued field extension $L/K$ maps to the line bundle $L(0,r)$ by \cite[Theorem 2.7.7]{Luetkebohmert}. Let $r^\trop(\lambda):=\nu(r(\lambda))$ and denote by $L(0,r^\trop)$ the class corresponding  to $r^\trop$ in $\Pic^0(A^\trop)=\Lambda^\vee\otimes_\Z\R /M$. 

The tropicalization maps the line bundle associated to $r$ to the tropical line bundle $L(0, r^\trop)$. However, the map 
\begin{equation*}\begin{split}
\Lambda^\vee\otimes_\Z\G_m^\an /M &\longrightarrow \Lambda^\vee\otimes_\Z\R /M\\ r&\longmapsto \nu\circ r
\end{split}\end{equation*}
has a natural section that identifies $\Pic^0(A^\trop)= \Lambda^\vee\otimes_\Z\R /M$ with the skeleton of $\Pic^0(A)=\Lambda^\vee\otimes_\Z\G_m^\an/M$ and the tropicalization map with the retraction map. 

For general $H$, pick a factor of automorphy $(H,r)$ for a line bundle in $\Pic^H(A)$, and let $(H,r^\trop)$ be its tropicalization. Then tensoring with $L(H,r)$ defines an isomorphism $\Pic^0(A)\to \Pic^H(A)$, and tensoring with $L(H,r^\trop)$ defines an isomorphism $\Pic^0(A^\trop)\to \Pic^H(A^\trop)$. On the level of skeleta, we obtain a chain of isomorphisms
\[
\Sigma\big(\Pic^H(A)\big)\xrightarrow{\otimes L(H,r)^{-1}} \Sigma\big(\Pic^0(A)\big)\xrightarrow{\trop} \Pic^0(A^\trop) \xrightarrow{\otimes L(H,r^\trop)} \Pic^H(A^\trop) \ ,
\]
the composite of which we denote by $\phi$. If for $G\in \NS(A)$ we denote by  $\tau_G\colon \Pic^G(A)^\an\to \Sigma(\Pic^G(A))$ the retraction to the skeleton. Then for $L(H,r')\in \Pic^H(A)(L)$ over some valued field extension $L/K$ we have:
\begin{equation*}\begin{split}
    (\phi\circ\tau_H)\big(L(H,r)\big)&= \trop\left(L\left(0,\frac {r'}r\right)\right)\otimes L\big(H,r^\trop\big) \\&= 
 L(H, r'\circ \nu - r\circ\nu +r^\trop)\\ &= L(H,(r')^\trop)\\&=L(H,r')^\trop  \ .
\end{split}\end{equation*}
Therefore, $\phi\circ \tau_H = \trop$. Since $\tau_H$ is surjective, $\phi$ unique with that property.
\end{proof}

\begin{remark}
    Note that if a line bundle $L$ on $A$ is ample, there is an alternative way to tropicalize $L$. First, one takes the effective divisor $D$ on $A$ corresponding to a section of $L$. The image of $D^\an$ under the retraction to $\Sigma(A)$ has the structure of a tropical hypersurface $D^\trop$ in $A^\trop$ \cite{GublerTropicalAbelian}, which in turn corresponds to a tropical line bundle on $A^\trop$. To see that this line bundle coincides with $L^\trop$, first rigidify $L$ so that $D$ corresponds to a theta function $f$. Then $D^\trop$ is defined by the tropicalization of the theta function $f^\trop$ (in the sense of \cite{FosterRabinoffShokriehSoto}) and $f^\trop$ is a theta function with respect to  the induced rigidification of $L^\trop$ \cite[Theorem 4.10]{FosterRabinoffShokriehSoto}.
\end{remark}

We will need the following result:

\begin{lemma}
\label{lem:pull-backs of line bundles commute with tropicalization}
    Let $\phi\colon N\otimes_\Z\G_m^\an/\Lambda\to N'\otimes_\Z\G_m^\an/\Lambda'$ be a homomorphism of analytic tori induced by a homomorphism $\phi_N\colon N\to N'$ of free abelian groups. Then for every line bundle $L$ on $N'\otimes_\Z\G_m^\an$ we have 
    \[
    (\phi^*L)^\trop=\phi_\trop^*L^\trop \ .
    \]
\end{lemma}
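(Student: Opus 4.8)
The plan is to compute both sides explicitly through factors of automorphy and to check that every ingredient is natural with respect to $\phi$. Write $\phi_M\colon M'\to M$ for the map dual to $\phi_N$, and let $\phi_\Lambda\colon\Lambda\to\Lambda'$ be the induced map on lattices; its existence is precisely what allows $\phi$ to descend to the quotients. On the tropical side $\phi_N$ induces $\phi_{N,\R}\colon N_\R\to N'_\R$ and hence $\phi_\trop\colon N_\R/\Lambda\to N'_\R/\Lambda'$, and under the identification of $\Lambda$ with its image in $N_\R$ the restriction of $\phi_\trop$ to the lattices is again $\phi_\Lambda$. The single identity that drives the whole argument is the naturality of the evaluation pairing: $\langle x,\phi_M(m')\rangle=\langle\phi(x),m'\rangle$ for $x\in T$ and $m'\in M'$ (and likewise over $\R$ after applying $\nu$).

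First I would write $L=L_{(H',r')}$. Pulling back the defining cocycle along $\phi$ and using the naturality identity shows that $\phi^*L=L_{(H,r)}$ with $H=\phi_M\circ H'\circ\phi_\Lambda$ and $r=r'\circ\phi_\Lambda$; the same identity verifies that $(H,r)$ still satisfies \eqref{eq:condition on non-Arch factor of automorphy}. Tropicalizing, $(\phi^*L)^\trop=L(H,r^\trop)$ with $r^\trop(\lambda)=\nu\big(r(\lambda)\big)-\tfrac12[\lambda,\lambda]_H^\R$.

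The key step is to see that the nonlinear correction term transports correctly. Applying naturality of the pairing once more, now over $\R$, gives $[\lambda,\lambda]_H^\R=\langle\lambda,\phi_M H'\phi_\Lambda(\lambda)\rangle_\R=\langle\phi_\Lambda(\lambda),H'\phi_\Lambda(\lambda)\rangle_\R=[\phi_\Lambda(\lambda),\phi_\Lambda(\lambda)]_{H'}^\R$, while $\nu\big(r(\lambda)\big)=\nu\big(r'(\phi_\Lambda(\lambda))\big)$; hence $r^\trop=(r')^\trop\circ\phi_\Lambda$ and so $(\phi^*L)^\trop=L\big(H,(r')^\trop\circ\phi_\Lambda\big)$. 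On the other hand $L^\trop=L(H',(r')^\trop)$, and since the pullback of a tropical line bundle along $\phi_\trop$ is given on factors of automorphy by the same recipe of pre- and post-composition, $\phi_\trop^*L^\trop=L\big(\phi_M\circ H'\circ\phi_\Lambda,(r')^\trop\circ\phi_\Lambda\big)$, which is the same line bundle. I do not expect a genuine obstacle here: the only subtle point is that the quadratic term $-\tfrac12[\lambda,\lambda]_H^\R$ is the one piece of the tropicalization that is not manifestly functorial, and the computation above confirms that it behaves well precisely because $H$ pulls back to $\phi_M\circ H'\circ\phi_\Lambda$.
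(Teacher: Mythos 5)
Your proposal is correct and follows essentially the same route as the paper: both arguments work through factors of automorphy, establish that $\phi^*L_{(H',r')}$ has factor of automorphy obtained by pre-/post-composition with $\phi_\Lambda$ and $\phi_N$ (the paper cites this from L\"utkebohmert, while you derive it from naturality of the evaluation pairing), and then verify that the quadratic correction term $-\tfrac12[\lambda,\lambda]_H^\R$ in the definition of $r^\trop$ transports correctly. In effect you have written out explicitly the ``short calculation'' that the paper's proof leaves to the reader, which is exactly the right content.
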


\begin{proof}
Let $L(H,r)$ be a line bundle on $N\otimes_\Z\G_m^\an$. If we denote by $\phi_\Lambda\colon \Lambda\to \Lambda'$ the morphism induced by $\phi_N$, then by \cite[Corollary 6.4.3]{Luetkebohmert} we have

\[
\phi^*L(H,r)= L\big(H\circ (\phi_\Lambda\times \phi_N), r\circ\phi_\Lambda\big) \ .
\]
By a short calculation  we deduce the desired equality.
\end{proof}

\subsection{The case of simple bundles}
Let $A$ be an abelian variety with totally degenerate reduction with $A^\an\cong T^\an/\Lambda$ for some algebraic torus $T$, and let $H\in\NS(A)_\Q$. We now construct a natural tropicalization map for simple semi-homogeneous vector bundles:

\begin{equation*}
\trop\colon M_{H,1}(A)^{\an}\longrightarrow M_{H,1}^{\SmallLattice{H}}(A^{\trop}) \ .
\end{equation*}

 Let $E$ be a simple semi-homogeneous vector bundle on $A$ of slope $\delta(E)=H$. To tropicalize $E$, we use Proposition \ref{prop:writing vector bundles as push-forwards} and write $E=f_*L$ for some line bundle $L$ on the source $B$  of an isogeny $f\colon T^\an/\Lambda'\to T^\an\Lambda$ associated to a finite-index sublattice $\Lambda'$ of $\Lambda$. We then tropicalize $L$ and obtain a line bundle $L^\trop$ on $N_\R/\Lambda'$, which we can push forward to a vector bundle $f^\trop_*L^\trop$ on $N_\R/\Lambda$. This is not well-defined, as $\Lambda'$ and $L$ are not well-defined.

To fix this, we can use $\psi_H$ (see Remark \ref{definitionofpsi}) to tropicalize vector bundles in $\mc S_H$. Given $E\in \mc S_H$, we apply $\psi_H$ and obtain a line bundle $L$ on $N\otimes_\Z\G_m^\an/\SmallLattice{H}$, that is well defined up the action of the automorphism group $\big[\Lambda/\SmallLattice{H}\big]$ of the cover $T^\an/\SmallLattice{H}\to A$. Therefore, its tropicalization $L^\trop$ is a tropical line bundle on $N_\R/\SmallLattice{H}$ that is well-defined up to the action of the automorphism group of the cover $N_\R/\SmallLattice{H}\to N_\R/\Lambda$. By Proposition \ref{prop:Moduli of Gamma-compatible tropical vector bundles}, the tropical line bundle $L^\trop$ defines an equivalence class of $\SmallLattice{H}$-compatible tropical vector bundles of slope $H$ that is independent of all choices. In other words, we obtain a natural map
\[
\trop\colon M_{H,1}(A)(K)\longrightarrow \mc M_{H,1}^{\SmallLattice{H}}(A^{\trop})  
\]
By construction, if a line bundle  $L$ on $T_{K'}^\an/\Lambda'$ represents $H$ and $f\colon T_{K'}^\an/\Lambda'\to A$ denotes the projection, then $\trop(f_*L)$ is the equivalence class of $f^\trop_*L^\trop$.

This construction is naturally compatible with non-Archimedean extensions $K'$ of $K$ and so we obtain the desired tropicalization map for simple semi-homogeneous vector bundles:
\begin{equation*}
\trop\colon M_{H,1}(A)^{\an}\longrightarrow M_{H,1}^{\SmallLattice{H}}(A^{\trop}) \ .
\end{equation*}

We now show that $\trop$ agrees with the retraction $\tau\colon M_{H,1}(A)^{\an}\rightarrow\Sigma(M_{H,1}(A))$ to the skeleton $\Sigma(M_{H,1}(A))$ of $M_{H,1}(A)\cong A^\vee/\Sigma(H)$. To this end, we  first represent the quotient $A^\vee/\Sigma(H)$  explicitly  as $(\Lambda')^\vee\otimes \G_m^\an/M'$, for some finite-index sublattice $\Lambda'$ of $\Lambda$ and some lattice $M'$ containing $M$ as a finite-index sublattice.

\begin{theorem}
\label{thm:skeleton in simple case}
    Let $H\in\NS(A)_\Q$. There exists an isomorphism $\phi\colon\Sigma(S_H)\to \mc M_{H,1}^{\SmallLattice{H}}(A^{\trop})$ such the diagram
    \[
    \begin{tikzcd}
        & \Sigma(S_H)\arrow[dd,"\phi","\sim"'] \\
        M_{H,1}(A)^\an \arrow[ur,"\tau"] \arrow[dr,"\trop"'] &  \\ 
         & M_{H,1}^{\SmallLattice{H}}(A^{\trop})\ ,
    \end{tikzcd}
    \]
    where $\tau$ denotes the retraction to $\Sigma\big(M_{H,1}(A)\big)$, commutes.
\end{theorem}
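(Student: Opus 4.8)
The plan is to reduce the statement to the line bundle case already settled in Theorem \ref{thm:tropicalization of line bundles is retraction}, applied to the abelian variety $A_H$. By Proposition \ref{prop:description of S_H} we have a canonical identification $M_{H,1}(A)=\Pic^H(A_H)$, where $A_H=\big(\NLargeLattice{H}\otimes_\Z\G_m^\an\big)/\SmallLattice{H}$; here the class $H$, originally only rational on $A$, restricts on $\SmallLattice{H}$ to a $\G_m$-symmetric homomorphism $\SmallLattice{H}\to M\subseteq \MLargeLattice{H}$ and hence defines a genuine \emph{integral} Néron--Severi class on $A_H$ (this is precisely the point of passing to $\SmallLattice{H}$ and $\NLargeLattice{H}$), so that Theorem \ref{thm:tropicalization of line bundles is retraction} is applicable. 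On the tropical side, Proposition \ref{prop:Moduli of Gamma-compatible tropical vector bundles} identifies $M_{H,1}^{\SmallLattice{H}}(A^\trop)$ with $\Hom(\SmallLattice{H},\R)/M'$ for $M'=M+H(\Lambda)$, and Lemma \ref{lem:short exact sequence for large M} gives $M'=\MLargeLattice{H}$; therefore $M_{H,1}^{\SmallLattice{H}}(A^\trop)=\Pic^H(A_H^\trop)$, where $A_H^\trop=N_\R/\SmallLattice{H}$ is the tropicalization of $A_H$ equipped with the integral structure coming from the cocharacter lattice $\NLargeLattice{H}$. Under these two identifications the theorem reduces to the line bundle statement for $A_H$.

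The crux is to verify that, under the identifications above, the simple-bundle tropicalization map $\trop\colon M_{H,1}(A)^\an\to M_{H,1}^{\SmallLattice{H}}(A^\trop)$ agrees with the line bundle tropicalization map $\trop\colon \Pic^H(A_H)^\an\to \Pic^H(A_H^\trop)$ of \S\ref{subsec:tropicalizing line bundles}. I would unwind the construction of the former: given $E\in\mc S_H$ one applies $\psi_H$ to obtain a line bundle on $T^\an/\SmallLattice{H}$, well-defined up to the $\Lambda/\SmallLattice{H}$-action, tropicalizes it, and reads off the resulting equivalence class. The passage from this description (cocharacter lattice $N$, cover of $A$) to the description $\Pic^H(A_H)$ (cocharacter lattice $\NLargeLattice{H}$) is governed by Lemma \ref{lem:description of quotient}, and to see that it is compatible with tropicalization I would invoke Lemma \ref{lem:pull-backs of line bundles commute with tropicalization}, which states that tropicalization commutes with the maps induced by homomorphisms of cocharacter lattices. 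Checking that these change-of-lattice isomorphisms intertwine the two tropicalization maps is the main obstacle, since it requires matching the factor-of-automorphy bookkeeping on both the analytic and the tropical sides; in particular one must confirm that the extended pairing of Lemma \ref{lem:extending the pairing}, which encodes the embedding of $\SmallLattice{H}$ into $\NLargeLattice{H}\otimes_\Z\G_m^\an$, tropicalizes to the corresponding embedding into $N_\R$ compatibly with the $\Lambda$-quotient.

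Granting this compatibility, the proof concludes as follows. Since $\Sigma(S_H)=\Sigma\big(M_{H,1}(A)\big)=\Sigma\big(\Pic^H(A_H)\big)$, Theorem \ref{thm:tropicalization of line bundles is retraction} applied to $A_H$ with its integral class $H$ produces a unique isomorphism $\phi\colon \Sigma(S_H)\xrightarrow{\sim}\Pic^H(A_H^\trop)=M_{H,1}^{\SmallLattice{H}}(A^\trop)$ for which $\phi\circ\tau$ equals the line bundle tropicalization. By the compatibility of the previous paragraph, $\phi\circ\tau$ then coincides with the simple-bundle tropicalization $\trop$, which is exactly the asserted commutativity of the diagram. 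Uniqueness of $\phi$ is automatic from the surjectivity of the retraction $\tau$ onto $\Sigma(S_H)$.
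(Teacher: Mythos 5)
Your proposal is correct and follows essentially the same route as the paper's proof: identify $M_{H,1}(A)^\an$ with $\Pic^H\big(\NLargeLattice{H}\otimes_\Z\G_m^\an/\SmallLattice{H}\big)$ via Proposition \ref{prop:description of S_H}, identify the tropical side with $\Hom(\SmallLattice{H},\R)/\MLargeLattice{H}$ via Proposition \ref{prop:Moduli of Gamma-compatible tropical vector bundles} and Lemma \ref{lem:short exact sequence for large M}, take $\phi$ to be the resulting identification, and reduce to the line bundle case of Theorem \ref{thm:tropicalization of line bundles is retraction}. The compatibility you flag as the ``main obstacle'' is exactly what the paper settles, using density of $M_{H,1}(A)(K)$ in $M_{H,1}(A)^\an$: for $E=f_*L\in\mc S_H$ with $\Lambda'$ $H$-admissible one has $\trop(E)=\big[f^\trop_*\trop(L)\big]=(g^\trop)^*\trop(L)=\trop(g^*L)=\tau(g^*L)$ and $\psi_H(E)=g^*L$, where $g\colon \NLargeLattice{H}\otimes_\Z\G_m^\an/\SmallLattice{H}\to A'$ is the quotient map and the middle equality is Lemma \ref{lem:pull-backs of line bundles commute with tropicalization}.
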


\begin{proof}
    By Proposition \ref{prop:description of S_H}, we have can identify $M_{H,1}(A)^\an$  with  $\Pic^H(\NLargeLattice{H}\otimes_\Z\G_m^\an/\SmallLattice{H})$. As in the line bundle case treated in Theorem \ref{thm:tropicalization of line bundles is retraction},  we have 
    \[
    \Sigma(M_{H,1}(A))=\Pic^H(\NLargeLattice{H}\otimes_\Z\R/\SmallLattice{H})
    =\Hom(\SmallLattice{H},\R)/\MLargeLattice{H} \ ,
    \]
    and $\tau$ is the tropicalization map of line bundles on $\NLargeLattice{H}\otimes_\Z\G_m^\an/\SmallLattice{H}$ of class $H$. On the other hand, we have
    \[
    M_{H,1}^{\SmallLattice{H}}(A^{\trop})=\Hom(\SmallLattice{H},\R)/M+H(\Lambda)
    \]
    by Proposition \ref{prop:Moduli of Gamma-compatible tropical vector bundles} and
    \[
    M+H(\Lambda)=\MLargeLattice{H}
    \]
    by Lemma \ref{lem:short exact sequence for large M}. Therefore, in the representation of the two spaces, we can take $\phi$ to be the identity. It remains to show that the diagram commutes. Since $M_{H,1}(A)(K)$ is dense in $M_{H,1}(A)^{\an}$ it is enough to consider an $E\in \mc S_H$. Let $\Lambda'$ be $H$-admissible, denote $A'=N\otimes_\Z\G_m^\an/\Lambda'$ and $f\colon A'\to A$ the quotient map, and let $L\in \Pic^{f^*H}(A')$ be a line bundle with $f_*L\cong E$. Then $\trop(E)= \big[f^\trop_*\trop(L)\big]$. If $g\colon \NLargeLattice{H}\otimes_\Z\G_m^\an/\SmallLattice{H}\to A'$ denote the quotient map, then the identification of Proposition \ref{prop:Moduli of Gamma-compatible tropical vector bundles} identifies $\trop(E)$ with $(g^\trop)^*\trop(L)$. Since tropicalization of line bundles commutes with pull-backs by Lemma \ref{lem:pull-backs of line bundles commute with tropicalization}, $\trop(E)$ is given by $\trop(g^*L)$, which we have already noted is equal to $\tau(g^*L)$. As we also have $\psi_H(E)=g^*L$, that is $E$ and $g^*L$ are identified in Proposition \ref{prop:description of S_H}, this completes the proof.
\end{proof}

\subsection{The general case}\label{subsection: tropicalizing general case}

Fix $H\in\NS(A)_\Q$. Recall from Theorem \ref{thm:decomposition of semi-homogeneous vector bundles} that every semi-homogeneous vector bundle $E$ of rank $r$ on $A$ with slope $\delta(H)$ is $S$-equivalent to a direct sum $\oplus_{i=1}^k E_i$ of $k$ simple semi-homogeneous vector bundles $E_i$ with $\delta(E_i)=H$, where $k= \frac{r}{n(H)}$ and the $E_i$, the Jordan-Hölder factors of $E$, are uniquely determined up to permuting the indices. In particular, by Theorem \ref{thm:description of moduli spaces} above, we have $M_{H,k}(A)^\an\cong\Sym^k\big(M_{H,1}(A)\big)^\an$. On the other hand, we have defined $M_{H,k}^{\SmallLattice{H}}(A^{\trop})$ to be symmetric power $\Sym^k(M_{H,1}^{\SmallLattice{H}}(A^{\trop})$ in Definition \ref{def:equivalence of vector bundles} above.
 %\martin{Replace $r$ by $k$}
We therefore have a natural tropicalization map 
\begin{equation*}
\trop\colon M_{H,k}(A)^{\an}\longrightarrow M_{H,k}^{\SmallLattice{H}}(A^{\trop})
\end{equation*}
given by taking the $k$-symmetric powers of the tropicalization map $\trop\colon M_{H,1}(A)^{\an}\longrightarrow M_{H,1}^{\SmallLattice{H}}(A^{\trop})$. In other words: To tropicalize a given $E\in M_{H,k}(A)^{\an}$ we can apply our construction for simple semi-homogeneous bundles to the Jordan-Hölder factors of $E$ and obtain a well-defined element in $M_{H,k}^{\SmallLattice{H}}(A^{\trop})=\Sym^k(M^{\SmallLattice{H}}_{H,1}(A^\trop))$.

The moduli space $M_{H,k}(A)$ is Calabi-Yau. Hence it admits a natural strong deformations retraction $\tau$ to the essential skeleton $\Sigma(M_{H,k}(A))$ (see \cite{KontsevichSoibelman} as well as \cite{MustataNicaise, NicaiseXu, NicaiseXuYu} for details on this construction). Since essential skeletons behave well with respect to symmetric products by \cite{BrownMazzon}, we obtain the following theorem:

\begin{theorem}
\label{thm:skeleton in general case}
    Let $H\in\NS(A)_\Q$ and $k\geq 1$. For the moduli space $M_{H,k}(A)$ there exists an isomorphism 
    \[
    \phi\colon \Sigma\big(M_{H,k}(A)\big)\xlongrightarrow{\sim} M_{H,k}^{\SmallLattice{H}}(A^{\trop}) \ ,
    \]
    such that the diagram
    \[
    \begin{tikzcd}
        & \Sigma\big(M_{H,k}(A)\big)\arrow[dd,"\phi","\sim"'] \\
        M_{H,k}(A)^\an \arrow[ur,"\tau"] \arrow[dr,"\trop"'] &  \\ 
         &  M_{H,k}^{\SmallLattice{H}}(A^{\trop}) \ .
    \end{tikzcd}
    \]
\end{theorem}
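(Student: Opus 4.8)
The plan is to deduce the general case directly from the simple case established in Theorem~\ref{thm:skeleton in simple case} by passing to symmetric powers. Recall that, by Theorem~\ref{thm_modulispace}, we have $M_{H,k}(A)=\Sym^k\big(M_{H,1}(A)\big)$, while by Definition~\ref{def:equivalence of vector bundles} the tropical side is $M_{H,k}^{\SmallLattice{H}}(A^{\trop})=\Sym^k\big(M_{H,1}^{\SmallLattice{H}}(A^{\trop})\big)$. Moreover, the tropicalization map in the general case was \emph{defined} as the $k$-fold symmetric power of the simple tropicalization map $\trop_1\colon M_{H,1}(A)^\an\to M_{H,1}^{\SmallLattice{H}}(A^{\trop})$. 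Thus every object and map in the target diagram is, by construction, the symmetric power of the corresponding object and map for $k=1$, and the entire argument reduces to checking that the retraction $\tau$ to the essential skeleton is compatible with this structure.

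First I would record that analytification commutes with the formation of symmetric powers, so that $M_{H,k}(A)^\an=\Sym^k\big(M_{H,1}(A)^\an\big)$, the symmetric power being taken as the quotient of the $k$-fold product of the Berkovich space $M_{H,1}(A)^\an$ by the natural $S_k$-action. Next I would invoke \cite{BrownMazzon}: since $M_{H,1}(A)\cong A^\vee/\Sigma(H)$ is a torsor under an abelian variety, hence Calabi--Yau with essential skeleton $\Sigma\big(M_{H,1}(A)\big)$ a real torus, the essential skeleton of the $k$-fold product is the $k$-fold product of essential skeletons equipped with the product retraction, and this identification is $S_k$-equivariant. Passing to the quotient yields a canonical identification
\[
\Sigma\big(M_{H,k}(A)\big)\;\cong\;\Sym^k\big(\Sigma\big(M_{H,1}(A)\big)\big)
\]
under which the retraction $\tau$ for $M_{H,k}(A)$ is identified with the symmetric power $\Sym^k(\tau_1)$ of the retraction $\tau_1$ for $M_{H,1}(A)$.

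With these identifications in hand, I would define $\phi$ to be the symmetric power $\Sym^k(\phi_1)$ of the isomorphism $\phi_1\colon\Sigma\big(M_{H,1}(A)\big)\xrightarrow{\sim}M_{H,1}^{\SmallLattice{H}}(A^{\trop})$ furnished by Theorem~\ref{thm:skeleton in simple case}. Commutativity of the diagram is then purely formal: applying the functor $\Sym^k$ to the commuting triangle $\phi_1\circ\tau_1=\trop_1$ of the simple case gives
\[
\phi\circ\tau \;=\; \Sym^k(\phi_1)\circ\Sym^k(\tau_1)\;=\;\Sym^k(\phi_1\circ\tau_1)\;=\;\Sym^k(\trop_1)\;=\;\trop \ .
\]

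The main obstacle is the second step, namely the precise invocation of \cite{BrownMazzon} to identify the essential skeleton and its retraction for a symmetric power. Two points require care here. First, $\Sym^k$ of an abelian variety (or a torsor thereunder) is in general singular, so one must work with the essential skeleton of a mildly singular (klt, or log Calabi--Yau) variety rather than a smooth one and confirm that the product-and-quotient formula for skeletons remains valid in that setting; this is exactly what the cited compatibility with symmetric products provides. Second, one must verify that the Berkovich retraction is genuinely $S_k$-equivariant on the $k$-fold product and descends to the retraction on the quotient, so that the identification of $\tau$ with $\Sym^k(\tau_1)$ is legitimate rather than merely set-theoretic; once this equivariance is in place, the remainder of the proof is the formal functoriality computation above.
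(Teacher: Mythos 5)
Your proposal is correct and follows essentially the same route as the paper: the paper's own proof simply invokes the compatibility of essential skeletons with symmetric products (citing \cite[Proposition 6.1.11]{BrownMazzon}) and deduces the statement as a direct consequence of Theorem \ref{thm:skeleton in simple case}. Your write-up is a faithful, more detailed expansion of exactly this argument, with the symmetric-power identifications of both sides, the retraction, and the tropicalization map made explicit.
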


\begin{proof}
    By \cite[Proposition 6.1.11]{BrownMazzon} the formation of the essential skeleton is compatible with symmetric products. The statement of our theorem is therefore a direct consequence of Theorem \ref{thm:skeleton in simple case}.
\end{proof}

%%%%%%%%%%%%%%%%%%%%%%%%%%%%%%%%%%%%%%%%%%%%%%%%%%%%

\section{Homogeneous bundles and representations}

%\inder{notations introduced: $\rho$, $X_r^{\trop}(\Lambda)$, $X_r(\Lambda)$, $X_r(\Lambda)^{\an}$  $M_{0,r}(A)^{\an}$}

%\inder{definitions introduced: tropical character variety $X_r^{\trop}(\Lambda)$, its diagonal component $X^{\trop}_r(\Lambda)^{\diag}$, character variety $X_r(\Lambda)$,  $\eta_A^r$ }\\

Let $A$ be an abelian variety over $K$ admitting a totally degenerate reduction over $R$; we write $A^{\an}=T^{\an}/\Lambda$ for its non-Archimedean uniformization and $A^{\trop}=T^{\trop}/\Lambda$ for its tropicalization. In this section we study a natural analytic epimorphism from the analytification of the character variety $X_r(\Lambda)$ of $\Lambda$ to the analytification of the moduli space $M_{0,r}(A)$ building on the non-Archimedean uniformization of the dual abelian variety. We introduce a tropical analogue of this construction and show that both of these construction are compatible under tropicalization. 

\subsection{The moduli space $M_{0,r}(A^{\trop})$ -- made explicit}\label{section_M0r} Let $M_{0,r}(A^\trop)$ be the set of isomorphism classes of homogeneous vector bundles on $A^{\trop}$ of rank $r$. 

 Every vector bundle $E\in M_{0,r}(A^{\trop})$ may be written as a direct sum $E_1\oplus \cdots\oplus E_s$ of indecomposable vector bundles. Recall that by Proposition \ref{prop_charsemihom} the bundle $E$ is homogeneous if $\delta(E_i)=0$ for all $i=1,\ldots, s$. For every $E_i$ there is a sublattice $\Lambda_i\subseteq \Lambda$ of index $k_i>0$ with $k_1+\cdots + k_s=r$ such that there is a line bundle $L_i\in \Pic(N_\R/\Lambda_i)$ whose pushforward along $N_\R/\Lambda_i\rightarrow N_\R/\Lambda$ is equal to $E_i$. So we may write  $M_{0,r}(A^{\trop})$ naturally as a disjoint union 
\begin{equation*}
M_{0,r}(A^{\trop})=\bigsqcup_{\substack{\Lambda_1\oplus\ldots\oplus \Lambda_s\subseteq \Lambda^s\\ [\Lambda:\Lambda_i]=k_i>0\\k_1+\ldots+k_s=r}} M_{\Lambda_1\oplus\cdots\oplus\Lambda_{s}}(A^{\trop}) \ ,
\end{equation*}
where $M_{\Lambda_1\oplus\cdots\oplus\Lambda_s }(A^{\trop})$ denotes the set of isomorphism classes of vector bundles $E$ that decompose as direct sums $E=E_1\oplus \cdots\oplus E_s$ of indecomposable vector bundles $E_i$ that arise as pushforwards of line bundles along the connected cover $N_\R/\Lambda_i\rightarrow N_\R/\Lambda$ for sublattices $\Lambda_i\subseteq\Lambda$ of index $k_i>0$. When $s=1$, we may identify $M_{\Lambda_1}^{\trop}(\Lambda)$ 
as
\begin{equation*}
M_{\Lambda_1}(A^{\trop})=\Pic^0(N_\R/\Lambda_i)=\Hom(\Lambda_1,\R)/N\simeq\Hom(\Lambda,\R)/N= \Lambda^\vee_\R/N=A^{\vee,\trop} \ .
\end{equation*}
In general, for arbitrary $s\geq 1$, let $l_j=\#\{k_i=j\}$ for $j=1,\ldots, r$. Then the group $S_{l_1}\times\cdots\times S_{l_r}$ permutes summands of equal length in $E=E_1\oplus\cdots\oplus E_s$ and we have an identification 
\begin{equation*}
M_{\Lambda_1\oplus\cdots\oplus\Lambda_s }(A^{\trop})=(A^{\vee,\trop})^s/S_{l_1}\times\cdots\times S_{l_r} \ .
\end{equation*}
We use this identification to endow $M_{0,r}(A^{\trop})$ with the structure of an integral affine orbifold. 

The \emph{main component} of $M_{0,r}(A^{\trop})$ is given by
\begin{equation*}
M_{\underbrace{\Lambda\oplus \cdots \oplus \Lambda}_{r \textrm{ times}}}(A^{\trop})=(\Lambda^\vee_\R/N)^r/S_r=\Sym^r(\widehat{A}^{\trop}) \ .
\end{equation*}
It is naturally isomorphic to the moduli space $M_{0,r}^{\Lambda}(A^{\trop})$.

\subsection{Tropical representations and tropical character variety}\label{section_tropcharvar}

Let $\rho\colon \Lambda\rightarrow \GL_r(\T)$ be a representation. We point out that in $\GL_r(\T)$ (the group of generalized tropical permutation matrices) all block triangular matrices are already block diagonal matrices. Thus, one should not distinguish between indecomposable and irreducible $\GL_r(\T)$-representations. In particular, every representation with values in $\GL_r(\T)$ can be viewed as \emph{semisimple}, since it is a direct sum of indecomposable representations.

We associate to $\rho$ a vector bundle $E(\rho)$ of rank $r$ on $A^{\trop}=N_\R/\Lambda$, whose sections (as an $S_r\ltimes \Aff^r$-torsor) over $U\subseteq A^{\trop}$ may be identified with affine functions $f\colon\widetilde{U}\rightarrow \R^r$ on the preimage $\widetilde{U}$ of $U$ in $N_\R$ such that $f(\lambda + \widetilde{u})=\rho(\lambda) f(\widetilde{u})$ for all $\widetilde{u}\in \widetilde{U}$ and $\lambda\in\Lambda$.

\begin{proposition}\label{prop_NarasimhanSeshadri}
For a representation $\rho\colon \Lambda\rightarrow \GL_r(\T)$ the vector bundle $E(\rho)$ is homogeneous and every homogeneous vector bundle arises this way.  
\end{proposition}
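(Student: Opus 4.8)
The plan is to prove both assertions through the dictionary of Proposition \ref{prop_vectorbundle=cover+linebundle} between tropical vector bundles and pairs (free cover, tropical line bundle), combined with the slope criterion for homogeneity in Proposition \ref{prop_charsemihom}. Throughout I write $\rho(\lambda)=(\sigma_\lambda,v_\lambda)$ under the identification $\GL_r(\T)=S_r\ltimes\R^r$, so that $\bar\rho\colon \Lambda\to S_r$, $\lambda\mapsto\sigma_\lambda$, is the homomorphism recording the permutation part. First I would check that $E(\rho)$ is genuinely an $S_r\ltimes\Aff^r$-torsor: over a small contractible open $U\subseteq A^\trop$ the preimage $\widetilde U\subseteq N_\R$ is a disjoint union of sheets indexed by $\Lambda$, and an equivariant affine map $f\colon\widetilde U\to\R^r$ is determined by its restriction to a single sheet, which identifies the sections over $U$ with $\Aff(U)^r$; on overlaps the two resulting trivializations differ by a \emph{constant} element $\rho(\lambda)\in\GL_r(\T)\subseteq S_r\ltimes\Aff_U^r$. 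Hence $E(\rho)$ is a rank-$r$ tropical vector bundle whose underlying free cover $Y\to A^\trop$ is the one classified by the monodromy $\bar\rho\colon \Lambda=\pi_1(A^\trop,0)\to S_r$.

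Next I would decompose according to the orbits of $\bar\rho$ on $\{1,\dots,r\}$. The connected components of $Y$ correspond exactly to these orbits: an orbit $O$ with stabilizer $\Lambda_O\subseteq\Lambda$ of index $|O|$ gives the component $N_\R/\Lambda_O$, so by Lemma \ref{lem:decomposition into indecomposable} we obtain $E(\rho)=\bigoplus_O E_O$ with each $E_O$ indecomposable. Choosing a base index $i\in O$, the subgroup $\Lambda_O$ fixes $i$, and the multiplication rule in $S_r\ltimes\R^r$ shows that $l_O(\lambda):=(v_\lambda)_i$ is additive on $\Lambda_O$; reading the $i$-th coordinate of the equivariance condition identifies the tropical line bundle on $N_\R/\Lambda_O$ underlying $E_O$ with $L(0,l_O)$ for this linear $l_O\colon\Lambda_O\to\R$. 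Since $c_1\big(L(0,l_O)\big)=0$, each summand has slope $\delta(E_O)=0$, and Proposition \ref{prop_charsemihom} then yields that $E(\rho)$ is homogeneous.

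For the converse I would reverse this procedure. Let $E$ be homogeneous of rank $r$, and decompose $E=\bigoplus_{i=1}^s E_i$ into indecomposables by Lemma \ref{lem:decomposition into indecomposable}. By Proposition \ref{prop_charsemihom} each $\delta(E_i)=0$, so writing $E_i=E(f_i\colon N_\R/\Lambda_i\to A^\trop,\,L_i)$ the vanishing of the slope forces $c_1(L_i)=0$, i.e.\ $L_i=L(0,l_i)$ for a linear $l_i\colon\Lambda_i\to\R$ defining a class in $\Pic^0(N_\R/\Lambda_i)$. It then suffices to realize each pushforward $f_{i*}L(0,l_i)$ as $E(\rho_i)$ for a suitable $\rho_i\colon\Lambda\to\GL_{d_i}(\T)$ with $d_i=[\Lambda:\Lambda_i]$, since the block-diagonal sum $\rho=\rho_1\oplus\cdots\oplus\rho_s$ then satisfies $E(\rho)=\bigoplus_i E(\rho_i)\cong E$. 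The natural candidate for $\rho_i$ is the tropical induced representation of $l_i\colon\Lambda_i\to\R=\GL_1(\T)$: after fixing coset representatives $g_1,\dots,g_{d_i}$ for $\Lambda/\Lambda_i$, for $\lambda\in\Lambda$ write $\lambda+g_j=g_{\sigma(j)}+h_j$ with $h_j\in\Lambda_i$, and let $\rho_i(\lambda)$ be the permutation $\sigma$ equipped with the translation vector $\big(l_i(h_j)\big)_j$. Because $\Lambda$ is abelian this is readily verified to be a homomorphism, and changing the representatives only conjugates it, so $E(\rho_i)$ is well defined.

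The main obstacle I anticipate is the final identification $E(\rho_i)\cong f_{i*}L(0,l_i)$, which is the tropical incarnation of the classical fact that an induced representation corresponds to the pushforward of the associated flat bundle. I would establish it by applying the first two steps in reverse: the monodromy $\bar\rho_i$ of the induced representation is precisely the $\Lambda$-action on $\Lambda/\Lambda_i$, whose single orbit reproduces the cover $N_\R/\Lambda_i\to A^\trop$, while the coordinate indexed by the trivial coset recovers exactly the factor of automorphy $(0,l_i)$. Thus the pair (cover, line bundle) attached to $E(\rho_i)$ by Proposition \ref{prop_vectorbundle=cover+linebundle} agrees with $\big(f_i,L(0,l_i)\big)$, and the only delicate point is to match the translation bookkeeping against the description of factors of automorphy, which is routine given the multiplication rule in $S_{d_i}\ltimes\R^{d_i}$.
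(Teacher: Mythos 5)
Your proof is correct, and its skeleton matches the paper's: decompose into indecomposable pieces (equivalently, orbits of the permutation part $\bar\rho$), identify each piece as the pushforward of a degree-zero line bundle $L(0,l)$ along a connected free cover $N_\R/\Lambda'\to N_\R/\Lambda$ (Lemma \ref{lem:covers of tropical Abelian variety}), and conclude homogeneity from Proposition \ref{prop_charsemihom}. Where you genuinely differ is in how surjectivity (``every homogeneous bundle arises'') is obtained. The paper proves it in one abstract stroke: $\GL_r(\T)$-torsors correspond to pairs consisting of a degree-$r$ free cover and an $\R$-torsor on it --- the constant-coefficient analogue of Proposition \ref{prop_vectorbundle=cover+linebundle} --- and since covers and $\R$-torsors trivialize over the universal cover $N_\R$, conjugacy classes of indecomposable representations are automatically in bijection with pairs consisting of an index-$r$ sublattice $\Lambda'\subseteq\Lambda$ and a homomorphism $l\colon\Lambda'\to\R$; surjectivity is then immediate, with no inverse map ever written down. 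You instead construct the inverse of this bijection by hand, as a tropical induced representation with coset representatives, and identify $E(\rho_i)\cong f_{i*}L(0,l_i)$ by re-running your forward orbit analysis on $\rho_i$ --- a valid and self-contained alternative. What the paper's route buys is brevity and freedom from semidirect-product bookkeeping; what yours buys is an explicit induction formula for the inverse correspondence, at the cost of fixing an indexing convention in $S_{d}\ltimes\R^{d}$ (indexing translation vectors by source cosets forces the ``translate-then-permute'' multiplication rule, indexing by target cosets the other one) so that the homomorphism property holds --- a point you correctly flag as routine, and which indeed works out under either convention if applied consistently.
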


\begin{proof}
We first consider the case $r=1$. For a representation $\rho\colon \Lambda\rightarrow \GL_1(\T)=\R$ the associated line bundle $L(\rho)=L(H,l)$ is defined by the factor of automorphy $H=0$ and $l=\phi\in\Hom(\Lambda,\R)$, equivalently, by the surjective homomorphism $\Lambda^\vee_\R\rightarrow \Lambda^\vee_\R/M=\Pic^0(A^{\trop})$. This proves our claim for $r=1$.

Now suppose that $\rho$ is an indecomposable representation. Note that analogous to Proposition \ref{prop_vectorbundle=cover+linebundle}, $\GL_r(\T)$-torsors (that is tropical vector bundles with constant transition functions) on any integral affine manifold $X$ are in natural one-to-one correspondence with pairs consisting of a degree-$r$ free cover $Y\to X$ and a torsor over $\GL_1(\T)=\R$ on $Y$. Since all covers and $\R$-torsors over the universal covering space $N_\R$ of $A^\trop$ are trivial, it follows that conjugacy classes of indecomposable $\GL_r(\T)$-representations are in one-to-one correspondence with $\GL_r(\T)$-torsors on $A^\trop$, whose associated cover is connected. By Lemma \ref{lem:covers of tropical Abelian variety}, the associated cover is of the form $N_\R/\Lambda'\to N_\R/\Lambda$ for some index-$r$ sublattice $\Lambda'$ of $\Lambda$. Since $\R$-torsors on $N_\R/\Lambda'$ correspond to representations $\rho'\colon \Lambda'\to \R$, this establishes a natural one-to-one correspondence between indecomposable representations $\rho\colon \Lambda\rightarrow\GL_r(\T)$ up to conjugation and pairs consisting of a sublattice $\Lambda'\subseteq \Lambda$ of index $r$ and a representation $\rho'\colon\Lambda'\rightarrow \R$. By construction of the correspondence, the vector bundle $E(\rho)$ is the pushforward along $N_\R/\Lambda'\rightarrow N_\R/\Lambda$ of the line bundle $E(\rho')=L(0,\rho')$ on $N_\R/\Lambda'$. This vector bundle is indecomposable, since the cover $N_\R/\Lambda'\rightarrow N_\R/\Lambda$ is connected. It is of slope $0$ since $E(\rho')$ has slope zero. Moreover, every vector bundle of rank $r$ and slope zero arises as the pushforward of a line bundle of slope zero on a connected cover of degree $r$ of $A^{\trop}$, which one may describe by choosing a sublattice $\Lambda'$ of index $r$. So every indecomposable vector bundle of slope zero arises as $E(\rho)$ for an irreducible representation $\rho\colon\Lambda\rightarrow \GL_r(\T)$. 

Finally, for the general case, we recall from Proposition \ref{prop_charsemihom} that every representation $\rho\colon\Lambda\rightarrow\GL_r(\T)$ is naturally a direct sum of irreducible representations and every homogeneous vector bundle is a direct sum of indecomposable vector bundles of slope zero. Since the association $\rho\mapsto E(\rho)$ is compatible with direct sums, the irreducible case of our claim implies the general one. 
\end{proof}

We now give a moduli-theoretic reinterpretation of Proposition \ref{prop_NarasimhanSeshadri}.

\begin{definition}\label{tropical character variety}
 We define the \emph{tropical character variety} denoted  $X_r^{\trop}(\Lambda)$, as the set of conjugacy classes of representations $\rho\colon\Lambda\rightarrow \GL_r(\T)$.   
\end{definition}

\begin{remark}
    Note that every representation $\rho\colon\Lambda\rightarrow \GL_r(\T)$ may be written as a direct sum $\rho^1\oplus \cdots\oplus\rho^s$ of indecomposable representations and the underlying permutation representation of $\rho^i$ gives rise to a sublattice $\Lambda_i\subseteq \Lambda$ of index $k_i>0$ such that $k_1+\cdots + k_s=r$.
Moreover, any such permutation representation gives rise to a connected cover $N_\R/\Lambda_i\rightarrow N_\R/\Lambda$.  
\end{remark}

 So we can write  $X_r^{\trop}(\Lambda)$ naturally as a disjoint union 
\begin{equation*}
X_r^{\trop}(\Lambda)=\bigsqcup_{\substack{\Lambda_1\oplus\ldots\oplus \Lambda_s\subseteq \Lambda^s\\ [\Lambda:\Lambda_i]=k_i>0\\k_1+\ldots+k_s=r}} X_{\Lambda_1\oplus\cdots\oplus\Lambda_{s}}^{\trop}(\Lambda) \ ,
\end{equation*}

When $s=1$, we have
\begin{equation*}
X_{\Lambda_1}^{\trop}(\Lambda)=\Hom(\Lambda_1,\R)\simeq\Hom(\Lambda,\R)= \Lambda^\vee_\R \ .
\end{equation*}
In general, for arbitrary $s\geq 1$, let $l_j=\#\{k_i=j\}$ for $j=1,\ldots, r$. Then the group $S_{l_1}\times\cdots\times S_{l_r}$ permutes summands of equal length in $\rho=\rho^1\oplus\cdots\oplus\rho^s$ and we have an identification 
\begin{equation*}
X_{\Lambda_1\oplus\cdots\oplus\Lambda_s }^{\trop}(\Lambda)=(\Lambda^\vee_\R)^s/S_{l_1}\times\cdots\times S_{l_r} \ .
\end{equation*}
We use this identification to endow $X_r^{\trop}(\Lambda)$ with the structure of an integral affine orbifold. 

\begin{definition}
The component 
\begin{equation*}
X_{\underbrace{\Lambda\oplus\cdots\oplus \Lambda}_{r \textrm{ times}}}=(\Lambda_\R)^r/S_r
\end{equation*}
of the tropical character variety $X_r^{\trop}(\Lambda)$ parametrizes diagonalizable representations and will be denoted by $X^{\trop}_r(\Lambda)^{\diag}$.
\end{definition}

\begin{corollary}\label{cor_tropicalcorrespondence}
There is a natural surjective morphism $\eta_{A^{\trop}}^r\colon X_r^{\trop}(\Lambda)\rightarrow M_{0,r}(A^{\trop})$ (of integral affine orbifolds) that associates to a representation $\rho\colon \Lambda\rightarrow\GL_r(\T)$ the homogeneous vector bundle $E(\rho)$. 
\end{corollary}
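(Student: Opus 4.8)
The plan is to read the morphism directly off Proposition \ref{prop_NarasimhanSeshadri} together with the explicit stratified descriptions of the two orbifolds from Section \ref{section_M0r}, and then verify compatibility with the integral affine structures one stratum at a time. First I would observe that Proposition \ref{prop_NarasimhanSeshadri} already does the work on the level of sets: since $E(\rho)$ depends only on the conjugacy class of $\rho$, the assignment $\rho\mapsto E(\rho)$ descends to a well-defined map of sets $\eta_{A^\trop}^r\colon X_r^\trop(\Lambda)\to M_{0,r}(A^\trop)$, and the same proposition yields surjectivity because every homogeneous bundle of rank $r$ arises as some $E(\rho)$. Thus the only genuine content is that $\eta_{A^\trop}^r$ respects the integral affine orbifold structures.

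Next I would check that $\eta_{A^\trop}^r$ respects the two decompositions into strata, which are indexed by the same combinatorial data $\Lambda_1\oplus\cdots\oplus\Lambda_s$. Writing $\rho=\rho^1\oplus\cdots\oplus\rho^s$ with each $\rho^i$ indecomposable and with underlying sublattice $\Lambda_i\subseteq\Lambda$ of index $k_i$, the construction in the proof of Proposition \ref{prop_NarasimhanSeshadri} exhibits $E(\rho^i)$ as the pushforward of the line bundle $L(0,\rho'_i)$ along $N_\R/\Lambda_i\to N_\R/\Lambda$, so $E(\rho)$ lands in the stratum $M_{\Lambda_1\oplus\cdots\oplus\Lambda_s}(A^\trop)$. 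Hence $\eta_{A^\trop}^r$ sends $X_{\Lambda_1\oplus\cdots\oplus\Lambda_s}^\trop(\Lambda)$ into $M_{\Lambda_1\oplus\cdots\oplus\Lambda_s}(A^\trop)$, and it suffices to treat each stratum separately.

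Then, on a fixed stratum I would identify $\eta_{A^\trop}^r$ with a coordinate-wise affine quotient. Under the identifications $X_{\Lambda_1\oplus\cdots\oplus\Lambda_s}^\trop(\Lambda)=(\Lambda^\vee_\R)^s/(S_{l_1}\times\cdots\times S_{l_r})$ and $M_{\Lambda_1\oplus\cdots\oplus\Lambda_s}(A^\trop)=(A^{\vee,\trop})^s/(S_{l_1}\times\cdots\times S_{l_r})$, passing from $\rho^i\in\Hom(\Lambda_i,\R)\cong\Lambda^\vee_\R$ to the class of $L(0,\rho'_i)$ in $\Pic^0(N_\R/\Lambda_i)\cong A^{\vee,\trop}=\Lambda^\vee_\R/N$ is precisely the quotient map $\Lambda^\vee_\R\to\Lambda^\vee_\R/N$ applied in each coordinate. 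This quotient is a morphism of integral affine manifolds, being the quotient by a full translation lattice and thus a local integral affine isomorphism, and it is equivariant for the permutation action of $S_{l_1}\times\cdots\times S_{l_r}$; hence it descends to a morphism of the orbifold quotients. Surjectivity of each stratum map is immediate since $\Lambda^\vee_\R\to A^{\vee,\trop}$ is surjective, and every homogeneous bundle lies in some stratum by Proposition \ref{prop_NarasimhanSeshadri}.

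The part requiring the most care — and what I would regard as the main obstacle, modest as it is for a corollary — is verifying that the geometric assignment $\rho\mapsto E(\rho)$ really coincides, coordinate by coordinate, with the linear-algebraic quotient $\Lambda^\vee_\R\to\Lambda^\vee_\R/N$ under the chosen identifications of the strata, and that this identification is compatible across the symmetric-group quotients so that a well-defined orbifold morphism results. This amounts to bookkeeping with the isomorphisms $\Hom(\Lambda_i,\R)\cong\Lambda^\vee_\R$ and $\Pic^0(N_\R/\Lambda_i)\cong A^{\vee,\trop}$ from Section \ref{section_M0r}, tracing through the correspondence between slope-zero line bundles and their factors of automorphy; once this is in place the morphism and orbifold statements follow formally.
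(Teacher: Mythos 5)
Your proposal is correct and follows exactly the route the paper takes: its proof of Corollary \ref{cor_tropicalcorrespondence} is a one-line appeal to Proposition \ref{prop_NarasimhanSeshadri} together with the explicit stratum-by-stratum descriptions of $X_r^{\trop}(\Lambda)$ and $M_{0,r}(A^{\trop})$, which is precisely what you invoke. You have merely spelled out the bookkeeping (matching strata, coordinate-wise quotient $\Lambda^\vee_\R\to\Lambda^\vee_\R/N$, permutation equivariance) that the paper leaves implicit.
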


We point out that $\eta_{A^{\trop}}^r$ sends $X^{\trop}_r(\Lambda)^{\diag}$  into $M^\Lambda_{0,r}(A^\trop)$.

\begin{proof}[Proof of Corollary \ref{cor_tropicalcorrespondence}]
This is an immediate consequence of the explicit descriptions of $X_r^{\trop}(\Lambda)$ right above, of $M_{0,r}(A^{\trop})$ in Section \ref{section_M0r}, and Proposition \ref{prop_NarasimhanSeshadri}.
\end{proof}

\subsection{Line bundles and representations via non-Archimedean uniformization} \label{section_charactervariety}
As above, let $A$ be an abelian variety over $K$ such that  $A^{\an}\simeq T^{\an} / \Lambda$, where $T$ is a split algebraic torus with character lattice $M$ and cocharacter lattice $N$, and $\Lambda$ is a lattice. 

\begin{definition}
Let $X_r(\Lambda)$ denote the geometric invariant theory quotient 
\begin{equation*}
X_r(\Lambda)=\Hom(\Lambda,\GL_r)\sslash {\GL_r} \ ,
\end{equation*}
where $\GL_r$ acts by conjugation. We will refer to $X_r(\Lambda)$ as the \emph{character variety} of $\Lambda$.
\end{definition} 

The character variety $X_r(\Lambda)$ is a moduli space for semisimple representations $\rho\colon \Lambda\rightarrow{\GL_r}$ up to conjugation. We refer the reader to \cite{Sikora_charactervarieties} for a careful and detailed exposition of this construction in general. 
Since $\Lambda$ is abelian, every irreducible representation of $\Lambda$ is one-dimensional. Again, we may write a semisimple representation $\rho\colon \Lambda\rightarrow{\GL_r}$ as a direct sum $\rho^1\oplus \cdots \oplus \rho^r$ of one-dimensional representation $\rho^i\colon \Lambda\rightarrow \G_m$. This shows that we have a natural isomorphism
\begin{equation*}
X_r(\Lambda)\simeq \Sym^r \big(X_1(\Lambda)\big)=\Sym^r \big(\Lambda^\vee \otimes \G_m\big) \ .
\end{equation*}

We may naturally associate to a semisimple representation a homogeneous vector bundle. This may be phrased from a moduli-theoretic point of view as follows:

\begin{nota}
In what follows, we denote by $X_r^{\an}(\Lambda)$ the analytification of the character variety $X_r(\Lambda)$.
\end{nota}

\begin{proposition}\label{prop_modularvdPR}

There is a natural surjective analytic morphism $\eta_A^r\colon X_r^{\an}(\Lambda)\rightarrow M_{0,r}(A)^{\an}$ given by associating to a semisimple representation $\rho\colon \Lambda\rightarrow\GL_r(L)$ for a non-Archimedean extension $L$ of $K$, the vector bundle $E(\rho)=L(\rho^1)\oplus\cdots\oplus L(\rho^r)$ on $A_L$. 
\end{proposition}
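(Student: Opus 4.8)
The plan is to reduce everything to the rank-one case and then pass to symmetric powers. Recall the isomorphism $X_r(\Lambda)\simeq \Sym^r\big(X_1(\Lambda)\big)$ established above, together with Theorem \ref{thm_modulispace} applied to $H=0$: here a simple semi-homogeneous bundle of slope $0$ is a line bundle in $\Pic^0(A)$, so $n(0)=1$, hence $r=k$ and $M_{0,1}(A)=\Pic^0(A)=A^\vee$. We thus obtain canonical identifications
\[
X_r(\Lambda)=\Sym^r\big(X_1(\Lambda)\big)
\qquad\text{and}\qquad
M_{0,r}(A)=\Sym^r\big(M_{0,1}(A)\big).
\]
Since analytification commutes with finite products and with quotients by finite groups, it commutes with the formation of symmetric powers, so these identifications persist after analytification. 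It therefore suffices to construct a surjective analytic morphism $\eta_A^1\colon X_1(\Lambda)^{\an}\to M_{0,1}(A)^{\an}$ carrying $\rho^1\mapsto L(\rho^1)$, and then to set $\eta_A^r=\Sym^r(\eta_A^1)$.

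For the rank-one map I would proceed as follows. Identifying $X_1(\Lambda)=\Hom(\Lambda,\G_m)=\Lambda^\vee\otimes_\Z\G_m$ and $M_{0,1}(A)=\Pic^0(A)=\Pic^0(A^{\an})$, the non-Archimedean Appell--Humbert theorem of Gerritzen \cite{Gerritzen} (recalled in \S\ref{section_AppellHumbert}) describes $\Pic^0(A)$ as the group of line bundles $L(0,r)$ for group homomorphisms $r\colon\Lambda\to\G_m$, subject to $L(0,r)\cong L(0,r')$ if and only if $r'/r$ lies in $M$. In other words $\Pic^0(A)^{\an}=A^{\vee,\an}$ is the analytic uniformization $\big(\Lambda^\vee\otimes_\Z\G_m^{\an}\big)/M$, and the uniformization map $r\mapsto L(0,r)$ is precisely the desired assignment $\eta_A^1$. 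As the quotient of an analytic group by a discrete cocompact subgroup, this covering map is a surjective morphism of analytic spaces.

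Setting $\eta_A^r=\Sym^r(\eta_A^1)$, I would then unwind the formula on geometric points: a semisimple representation $\rho$ over a complete extension $L/K$ is an unordered tuple $\{\rho^1,\dots,\rho^r\}$ of characters $\rho^i\colon\Lambda\to\G_m(L)$, i.e.\ a point of $\Sym^r\big(X_1(\Lambda)^{\an}\big)$, and $\eta_A^r$ sends it to the unordered tuple $\{L(\rho^1),\dots,L(\rho^r)\}$, which is exactly the point of $\Sym^r\big(\Pic^0(A)^{\an}\big)=M_{0,r}(A)^{\an}$ determined by the Jordan--H\"older factors of $E(\rho)=L(\rho^1)\oplus\cdots\oplus L(\rho^r)$. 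Naturality in $L$ is automatic, since $\eta_A^1$ is a morphism of analytic spaces and symmetric powers are functorial; this is the non-Archimedean counterpart of the tropical correspondence in Proposition \ref{prop_NarasimhanSeshadri}.

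The one point that genuinely needs care---and the main obstacle---is surjectivity, because the underlying topological space of an analytic product is not the product of the topological spaces, so surjectivity of $\eta_A^1$ does not transfer coordinatewise to the analytic product in a purely formal way. I would test surjectivity on $L$-points over complete algebraically closed extensions $L/K$, which suffice for a morphism of good Berkovich spaces. Over such an $L$ the uniformization $\eta_A^1$ is surjective on $L$-points, hence so is the $r$-fold analytic product $(\eta_A^1)^{\times_K r}$, since $\big(U^{\times_K r}\big)(L)=U(L)^r$ for algebraically closed $L$ and each coordinate lifts. Finally, in the commutative square relating $(\eta_A^1)^{\times_K r}$ and $\eta_A^r$ through the two quotient maps $(-)^{\times_K r}\to\Sym^r(-)$, surjectivity of $(\eta_A^1)^{\times_K r}$ together with surjectivity of the quotient map onto $\Sym^r\big(M_{0,1}(A)^{\an}\big)$ forces $\eta_A^r$ to be surjective on $L$-points for every such $L$, and therefore surjective.
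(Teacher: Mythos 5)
Your proof is correct and follows essentially the same route as the paper: identify $\eta_A^1$ with the non-Archimedean uniformization $\Lambda^\vee\otimes\G_m^{\an}\to \Pic^0(A)^{\an}$ coming from Gerritzen's Appell--Humbert description, then set $\eta_A^r=\Sym^r(\eta_A^1)$ using $X_r(\Lambda)\simeq\Sym^r\big(X_1(\Lambda)\big)$ and $M_{0,r}(A)=\Sym^r(A^\vee)$. The only difference is that the paper dismisses surjectivity of the symmetric-power map with the word ``clearly,'' whereas you supply the (correct) justification by testing on $L$-points over complete algebraically closed extensions and lifting coordinatewise through the quotient by $S_r$.
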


\begin{proof}
For $r=1$ this is the surjective analytic morphism \begin{equation*}\eta_A^1\colon \Lambda^\vee\otimes \G_m^{\an}\longrightarrow \Lambda^\vee\otimes \G_m^{\an}/N=\Pic^0(A)^{\an}\end{equation*}
defining the non-Archimedean uniformization of $A^\vee=\Pic_0(A)$. The general morphism $\eta_A^r$ is given by setting
\begin{equation*}
\eta_A^r=\Sym^r(\eta_A^1)\colon X_r^{\an}(\Lambda)=\Sym^r\big(\Lambda^\vee\otimes \G_m^{\an}\big)\longrightarrow\Sym^r(A^\vee)=M_{0,r}(A)^{\an}
\end{equation*}
and this clearly defines a surjective analytic morphism. 
\end{proof}

\begin{remark}\label{remark_boundedness}
     If we restrict the surjective morphism of the previous theorem to a fundamental domain for the action of $H$, we can derive a bijection. To be precise, let us fix any $\mathbb{Z}$-basis $\lambda_1, \ldots, \lambda_g$ of $\Lambda$ and a $\mathbb{Z}$-basis $x_1, \ldots, x_g$ of $M$. Since the matrix $$\begin{bmatrix}-\log \big|x_i(\lambda_j)\big|\end{bmatrix}_{i,j=1,\ldots, g}$$ has full rank over $\mathbb{R}$ we can exchange the order of the $\lambda_i$ and the $x_j$ so that the entries on the diagonal are all non-zero. Now we replace $\lambda_j$ by $\lambda_j^{-1}$, whenever $-\log\big|x_j(\lambda_j)\big| > 0$, so that the  new bases satisfy  $\big|x_i(\lambda_i)\big| < 1$ for all $i$. Then every class in $\Hom(\Lambda, K^\ast)$ modulo $M$ has a unique representative $\rho: \Lambda \rightarrow K^\ast$ satisfying 
\begin{equation*}
    \label{eqn:rank1}
\big|x_i(\lambda_i)\big| \leq \big|\rho(\lambda_i)\big| < 1 \qquad \mbox{for all } i = 1, \ldots, g.
\end{equation*}
In this way, homogeneous line bundles on $A$ correspond bijectively to representations $\rho: \Lambda \rightarrow K^\ast$ satisfying the previous condition. Note that 
this condition coincides with $\Phi$-boundedness of one-dimensional representation in \cite{Reversatvanderput}. In  \cite{Reversatvanderput}, Theorem (1.3.5) and Section 2,  an equivalence of categories relating homogeneous vector bundles of rank $r$ on $A$ and $r$-dimensional $\Phi$-bounded representations of the lattice $\Lambda$ is established. 
\end{remark}

\subsection{Tropicalization of character varieties}\label{section_tropcharvarII}

There is a natural tropicalization map 
\begin{equation*}
\trop\colon X_r^{\an}(\Lambda)\longrightarrow X_r^{\trop}(\Lambda)
\end{equation*}
that is given as follows: A point $x$ in $X^{\an}_r(\Lambda)$ may be represented by an $L$-valued point of $X_r(\Lambda)$ for a non-Archimedean extension $L$ of $K$. This, in turn, corresponds to a semisimple representation $\rho_x\colon \Lambda\rightarrow \GL_r(L)$. Write $\rho_x$ as a direct sum of irreducible representations $\rho_x^i\colon \Lambda\rightarrow \G_m$ for $i=1,\ldots, r$ (noting that irreducible representations of an abelian group are automatically one-dimensional). Then $\trop(x)$ is defined to be the tropical  representation $\rho_x^{\trop}=(\rho_x^1)^{\trop}\oplus \cdots \oplus (\rho_x^r)^{\trop}$, where $(\rho_x^i)^{\trop}$ is given by the composition $\Lambda\xrightarrow{\rho^i_x}L^\ast\xrightarrow{-\log\vert .\vert}\R=\GL_1(\T)$. Note that image of the map $\trop$ is contained in the component 
\begin{equation*}
X^{\trop}_r(\Lambda)^{\diag}=\Sym^r\big(\Lambda_\R^{\vee}\big)
\end{equation*}
parametrizing diagonal tropical representations of $\Lambda$.

\begin{proposition}\label{prop_skeletoncharactervariety}
Let $\Lambda$ be a finitely generated free abelian group and $\Sigma\big(X_r(\Lambda)\big)$ be the essential skeleton of the character variety 
$X_r(\Lambda)$. Then there is a natural isomorphism
\begin{equation*}
J\colon X^{\trop}_r(\Lambda)^{\diag} \xlongrightarrow{\sim} \Sigma\big(X_r(\Lambda)\big)
\end{equation*}
which makes the diagram
\begin{equation*}
\begin{tikzcd}
& X_r^{\an}(\Lambda)\arrow[ld,"\tau"']\arrow[rd,"\trop"] & \\
\Sigma\big(X_r(\Lambda)\big) && X^{\trop}_r(\Lambda)^{\diag}\arrow[ll,"\sim"',"J"]
\end{tikzcd}
\end{equation*}
commute. 
\end{proposition}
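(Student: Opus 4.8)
The plan is to reduce everything to the rank-one case and then pass to symmetric powers. Recall from \S\ref{section_charactervariety} that $X_1(\Lambda)=\Lambda^\vee\otimes\G_m$ is a split algebraic torus, that $X_r(\Lambda)\cong\Sym^r\big(X_1(\Lambda)\big)$, and that $X^{\trop}_r(\Lambda)^{\diag}=\Sym^r\big(\Lambda^\vee_\R\big)$. Moreover, by its very construction in \S\ref{section_tropcharvarII}, the tropicalization map $\trop\colon X_r^{\an}(\Lambda)\to X^{\trop}_r(\Lambda)^{\diag}$ is nothing but the $r$-th symmetric power of the rank-one tropicalization map $\trop_1\colon X_1^{\an}(\Lambda)\to\Lambda^\vee_\R$: writing a semisimple representation as a direct sum of characters and tropicalizing each summand is exactly the symmetric-power operation. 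Thus it suffices to prove the statement for $r=1$ and then apply $\Sym^r$.

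First I would treat the rank-one case. Here $X_1(\Lambda)=\Lambda^\vee\otimes\G_m$ is a split torus with cocharacter lattice $\Lambda^\vee$, and $\trop_1$ is the usual tropicalization of a torus onto $\Lambda^\vee_\R=\Hom(\Lambda,\R)$. The canonical translation-invariant top form on the torus exhibits it as a (log) Calabi--Yau variety whose essential skeleton is the entire $\Lambda^\vee_\R$, realized inside $X_1^{\an}(\Lambda)$ as the image of the Gauss (monomial) section $s_1$ of $\trop_1$; under this identification the retraction $\tau_1$ onto the skeleton is precisely $s_1\circ\trop_1$, so that $\trop_1$ agrees with $\tau_1$ up to a canonical identification $J_1\colon\Lambda^\vee_\R\xrightarrow{\sim}\Sigma\big(X_1(\Lambda)\big)$. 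This is the torus analogue of the $H=0$ computation carried out in the proof of Theorem \ref{thm:tropicalization of line bundles is retraction}; the only difference is that there one divides by the lattice $N$ to obtain $\Pic^0(A)$, whereas here no quotient is taken, which if anything simplifies the bookkeeping.

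To assemble the general case I would invoke the compatibility of essential skeletons with symmetric products, \cite[Proposition 6.1.11]{BrownMazzon}, which yields a canonical isomorphism $\Sigma\big(X_r(\Lambda)\big)\cong\Sym^r\big(\Sigma(X_1(\Lambda))\big)$ under which the retraction $\tau$ becomes $\Sym^r(\tau_1)$. Setting $J=\Sym^r(J_1)\colon X^{\trop}_r(\Lambda)^{\diag}=\Sym^r(\Lambda^\vee_\R)\xrightarrow{\sim}\Sigma\big(X_r(\Lambda)\big)$, the desired commutativity $\tau=J\circ\trop$ then follows by applying $\Sym^r$ to the rank-one identity $\tau_1=J_1\circ\trop_1$, using $\trop=\Sym^r(\trop_1)$ and $\tau=\Sym^r(\tau_1)$. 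Uniqueness of $J$ is immediate from the surjectivity of $\trop$ (equivalently of $\tau$), exactly as in Theorem \ref{thm:tropicalization of line bundles is retraction}.

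The main obstacle will be the rank-one step: one must make precise, in the open/log Calabi--Yau framework used throughout, that the essential skeleton of the torus $X_1(\Lambda)$ is the whole of $\Lambda^\vee_\R$ and that the associated retraction coincides with tropicalization. Once this identification is secured -- it is standard for tori equipped with their invariant volume form, and parallels the line-bundle computation already in hand -- the passage to symmetric powers via \cite{BrownMazzon} is purely formal.
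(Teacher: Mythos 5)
Your proposal is correct and follows essentially the same route as the paper: the paper's proof likewise treats the rank-one case first, identifying $\Sigma\big(\Lambda^\vee\otimes\G_m^{\an}\big)$ with $\Lambda^\vee\otimes\R$ so that tropicalization becomes the retraction, and then deduces the general case from the compatibility of essential skeletons of open Calabi--Yau varieties with symmetric powers via \cite[Proposition 6.1.11]{BrownMazzon} (together with \cite[Theorem F]{MMS_geometricP=W}). If anything, your write-up supplies more detail on the rank-one step (the Gauss section and the invariant volume form) than the paper, which simply asserts the identification.
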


\begin{proof}
Let us first consider the case $r=1$. Then we have $X_1(\Lambda)=\Lambda^\vee\otimes\G_m$ and $X_\Lambda^{\trop}(\Lambda)=\Lambda^\vee\otimes \R$. Therefore, we have a natural isomorphism $J\colon \Lambda^\vee\otimes\R \xrightarrow{\sim} \Sigma\big(\Lambda^\vee\otimes \G_m^{\an}\big)$ which makes the following diagram 
\begin{equation*}
\begin{tikzcd}
& \Lambda^\vee\otimes\G_m^{\an}\arrow[ld,"\tau"']\arrow[rd,"\trop"] & \\
\Sigma\big(\Lambda^\vee\otimes\G_m^{\an}\big) && \Lambda^\vee\otimes\R\arrow[ll,"\sim"',"J"] 
\end{tikzcd}
\end{equation*}
commute. The general case follows from the compatibility of essential skeletons of open 
Calabi--Yau varieties with symmetric powers (see \cite[Proposition 6.1.11]{BrownMazzon} and \cite[Theorem F]{MMS_geometricP=W}). 
\end{proof}

\begin{proposition}\label{prop_compatibilityrep->bundle}
For a non-Archimedean extension $L\vert K$ and a semisimple representation $\rho\colon \Lambda\rightarrow \GL_r(L)$ we have 
\begin{equation*}
E(\rho)^{\trop}\simeq E(\rho^\trop) \ . 
\end{equation*}
In other words, the diagram 
\begin{equation}\label{eq_correspondencetropicalizes}\begin{tikzcd}
X_r^\an(\Lambda)\arrow[d,"\trop"']\arrow[rr,"E(-)^{\an}"]&& M_{0,r}(A)^{\an}\arrow[d,"\trop"]\\
X_r^\trop(\Lambda)\arrow[rr,"E(-)"] && M_{0,r}(A^{\trop})
\end{tikzcd}\end{equation}
commutes. 
\end{proposition}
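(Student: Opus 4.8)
The plan is to reduce the statement to the rank-one case and then verify it by an explicit computation with factors of automorphy. Since $\Lambda$ is abelian, any semisimple $\rho$ decomposes as a direct sum $\rho=\rho^1\oplus\cdots\oplus\rho^r$ of one-dimensional representations $\rho^i\colon\Lambda\to\G_m(L)$, and by construction $E(\rho)=L(\rho^1)\oplus\cdots\oplus L(\rho^r)$, while $\rho^\trop=(\rho^1)^\trop\oplus\cdots\oplus(\rho^r)^\trop$ and $E(\rho^\trop)=E((\rho^1)^\trop)\oplus\cdots\oplus E((\rho^r)^\trop)$. Both $\eta_A^r$ and $\eta_{A^\trop}^r$ are obtained by applying $\Sym^r$ to their rank-one analogues (Proposition \ref{prop_modularvdPR} and Corollary \ref{cor_tropicalcorrespondence}), and the tropicalization maps on both $X_r(\Lambda)^\an$ and $M_{0,r}(A)^\an$ are likewise symmetric powers of the rank-one tropicalization: the former by the definition in \S\ref{section_tropcharvarII}, and the latter because for $H=0$ one has $\SmallLattice{0}=\Lambda$, so that the construction of \S\ref{subsection: tropicalizing general case} is built, via Jordan--H\"older factors, from the line-bundle case of Theorem \ref{thm:tropicalization of line bundles is retraction}. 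Hence it suffices to verify $E(\rho^i)^\trop\cong E((\rho^i)^\trop)$ for a single one-dimensional representation $\rho^i$; base change from $K$ to $L$ is harmless since factors of automorphy, and therefore their tropicalizations, are compatible with it.

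For the rank-one verification I would unwind the definitions on both sides. By the $r=1$ case of Proposition \ref{prop_NarasimhanSeshadri}, the tropical line bundle attached to $(\rho^i)^\trop\in\Hom(\Lambda,\R)$ is $E((\rho^i)^\trop)=L\big(0,(\rho^i)^\trop\big)$, whose factor of automorphy has trivial N\'eron--Severi part and linear part $(\rho^i)^\trop=-\log\vert\cdot\vert\circ\rho^i$. On the analytic side, $E(\rho^i)=L_{(0,\rho^i)}$ is the line bundle with factor of automorphy $(0,\rho^i)$. Applying the tropicalization of factors of automorphy from \S\ref{subsec:tropicalizing line bundles}, and using that the correction term $-\frac12[\lambda,\lambda]_H^\R$ vanishes identically because $H=0$, we obtain $L_{(0,\rho^i)}^\trop=L\big(0,(\rho^i)^\trop\big)$ with $(\rho^i)^\trop(\lambda)=\nu\big(\rho^i(\lambda)\big)$. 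The two descriptions agree once we recall that the valuation and the tropical absolute value are related by $\nu=-\log\vert\cdot\vert$, giving $E(\rho^i)^\trop\cong E((\rho^i)^\trop)$.

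Putting the two steps together, compatibility with direct sums yields $E(\rho)^\trop\cong\bigoplus_i L_{(0,\rho^i)}^\trop\cong\bigoplus_i E((\rho^i)^\trop)\cong E(\rho^\trop)$, which is exactly the commutativity of the square. I expect no genuine obstacle here: the only point requiring care is the bookkeeping needed to see that the two a priori distinct notions of tropicalization---tropicalizing a line bundle through its factor of automorphy, versus post-composing a representation with $-\log\vert\cdot\vert$---produce the same linear functional on $\Lambda$. This is precisely where the vanishing of the quadratic term for $H=0$ and the convention $\nu=-\log\vert\cdot\vert$ enter, and after these identifications the reduction to the rank-one case via symmetric powers makes the general statement immediate.
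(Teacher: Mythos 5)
Your proposal is correct and follows essentially the same route as the paper: reduce to the rank-one case via the decomposition into one-dimensional representations and the fact that all four maps respect symmetric powers (using $\SmallLattice{0}=\Lambda$ so that the tropicalization on the bundle side is the symmetric power of the line-bundle tropicalization), then verify the rank-one case from the construction of the tropicalization of line bundles via factors of automorphy. Your explicit observation that the correction term $-\tfrac12[\lambda,\lambda]_H^\R$ vanishes for $H=0$, so that both sides give the linear functional $\nu\circ\rho^i$, is exactly the computation the paper's proof invokes implicitly when it cites Section \ref{subsec:tropicalizing line bundles}.
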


\begin{proof}
For $r=1$, diagram \eqref{eq_correspondencetropicalizes} becomes
\begin{equation*}\begin{tikzcd}
\Lambda^\vee\otimes\G_m^{\an}\arrow[rr]\arrow[d,"\trop"'] && \Lambda^\vee\otimes\G_m^{\an}/N=\Pic_0(A)^{\an}\arrow[d,"\trop"]\\
\Lambda^{\vee}\otimes\R\arrow[rr] &&\Lambda^{\vee}\otimes\R/N
\end{tikzcd}\end{equation*}
and this is commutative by the construction of the tropicalization map for line bundles in Section \ref{subsec:tropicalizing line bundles} above. The general case $r\geq 1$ follows, since both $E(.)$ in the algebraic and the tropical realm and both tropicalization maps naturally respect symmetric powers.
\end{proof}

%%%%%%%%%%%%%%%%%%%%%%%%%%%%%%%%%%%%%%%%%%%%%%%%%%%%%%

%%%%%%%%%%%%%%%%%%%%%%%%%%%%%%%%%%%%%%%%%%%%%%%%%%%%%%

%%%%%%%%%%%%%%%%%%%%%%%%%%%%%%%%%%%%%%%%%%%%%%%%%%%%%%

\bibliographystyle{amsalpha}
\bibliography{biblio}{}

\end{document}